\numberwithin{equation}{section}
\newtheorem{thm}{Theorem}[section]
\newtheorem{lem}[thm]{Lemma}
\newtheorem{co}[thm]{Conjecture}
\newtheorem{cor}[thm]{Corollary}
\newtheorem{rem}[thm]{Remark}
\newtheorem{df}[thm]{Definition}
\newtheorem{ex}[thm]{Example}
\author{A.~V.~Kosyak}
\address{Institute of Mathematics, Ukrainian National Academy of Sciences, 3
Tereshchenkivs'ka, Kyiv, 01601, Ukraine.}
\email{kosyak02@gmail.com}
\title{Induced representations of infinite-dimensional groups}
\date{}
\begin{document}

\begin{abstract}
The {\it induced representation} ${\rm Ind}_H^GS$ of a locally
compact group $G$ is the unitary representation of the group $G$
associated with unitary representation $S:H\rightarrow U(V)$ of a
subgroup $H$ of the group $G$. Our aim is to {\it develop the
concept of induced representations for infinite-dimensional groups}.
The induced representations for infinite-dimensional groups in not
unique, as in the case of a locally compact groups. It depends on
two  completions $\tilde H$ and $\tilde G$ of the subgroup $H$ and
the group $G$, on an extension $\tilde S:\tilde H\rightarrow U(V)$
of the representation $S:H\rightarrow U(V)$ and on a choice of the
$G$-quasi-invariant measure $\mu$ on an appropriate completion
$\tilde X=\tilde H\backslash \tilde G$  of the space $H\backslash
G$. As the illustration we consider the ``nilpotent'' group
$B_0^{\mathbb Z}$ of infinite in both directions upper triangular matrices and the
induced representation corresponding to the so-called {\it generic
orbits}.
\end{abstract}
\maketitle
\tableofcontents
\section{Introduction}
The {\it induced representations} were introduced and studied for a {\it finite groups} by F.G. Frobenius.
Our aim is to  develop the concept of {\it induced representations
for infinite-dimensional groups}.

The content of the article is as
follows. Section~\ref{s.ind-loc-comp} is devoted to  the notion of
induced representations elaborated for a {\it locally compact groups} by
G.W.Mackey \cite{Mack49,Mack52} and to the Kirillov {\it orbit methods}
\cite{Kir62} for the nilpotent Lie groups $B(n,{\mathbb R})$.

In Section~\ref{s.ind-inf} we extend the notion of the induced
representations for infinite-dimensi\-onal groups. We start the
orbit method for infinite-dimensional ``nilpotent'' group
$B_0^{\mathbb Z}$, construct the induced representations
corresponding to the generic orbits and study its irreducibility.

In Section~\ref{Append-B-0^Z} we remind the Gauss decomposition  of $n\times n$ matrices (Subsection~\ref{Gauss-dec}),
and Gauss  decomposition of infinite order matrices (Subsection~\ref{Gauss-dec-inf}).

More precisely, we give  the well-known definition of the induced representations for a locally compact
groups in Subsection~\ref{Ind-loc}.
In Subsection~\ref{orb-meth(n)} we remind the Kirillov orbit method for finite-dimensional nilpotent group $G_n=B(n,{\mathbb R})$.
The induced  representations, corresponding to a generic orbits of the group $G_n$ are discussed in Subsection~\ref{orb-gen-loc}.
In the Subsection~\ref{new-irred-loc} we give a new proof of the irreducibility of the induced representations corresponding to
a generic orbits in order to extend  the proof of the irreducibility for  infinite-dimensional ``nilpotent''  group $B_0^{\mathbb Z}$.

In Subsection~\ref{reg+quasireg-rep} we remind the definition of
the regular and quasiregular representations of
infinite-dimensional groups. As in the case of a locally compact
group these representations are the particular cases of the
induced representations. This gives us the hint how to define the
induced representations for infinite-dimensional groups. The
definition is done in Subsection~\ref{Ind-rep}. The questions
concerning the  development of the orbit method for
infinite-dimensional ``nilpotent'' group $B_0^{\mathbb N}$ and
$B_0^{\mathbb Z}$ are discussed in Subsection~\ref{orb-meth-inf}.

The completions of the initial groups $G$ are necessary to the
definition of the induced representations for the initial
infinite-dimensional group. The completions of the inductive limit
$G=\varinjlim_{n}G_n$ of matrix groups $G_n$ are studied in
Subsection~\ref{Hilb-Lie-GL_2(a)} and \ref{Hilb-Lie-B_2(a)}. We
show that the {\it Hilbert-Lie groups} appear naturally in the
representation theory of the infinite-dimensional matrix group.
We define a family of the Hilbert-Lie group ${\rm GL}_2(a)$ (resp.
$B_2(a)$), a Hilbert completions of the group ${\rm
GL}_0(2\infty,{\mathbb R})=\varinjlim_{n}{\rm GL}(2n-1,{\mathbb R})$ (resp. $B_0^{\mathbb
Z}=\varinjlim_{n}B(2n-1,{\mathbb R})$). We show that {\it any  continuous representation} of the group
${\rm GL}_0(2\infty,{\mathbb R})$ (resp. $B_0^{\mathbb
Z}$) {\it is in fact continuous} in some stronger topology, namely
{\it in a topology of a suitable Hilbert -Lie group}
${\rm GL}_2(a)$ (resp. $B_2(a)$) depending on the representation.

In Subsection~\ref{gen-orb,Z} we construct the induced
representations of the group $B_0^{\mathbb Z}$ corresponding to a
generic orbits. The irreducibility of these representations is
studied in Subsection~\ref{gen-orb,Z,irr}. The very first steps to
describe some part of the {\it dual} for the group $B_0^{\mathbb N}$ and
$B_0^{\mathbb Z}$ are mentioned in Subsection~\ref{dual-astep}
\section{Induced  representations, finite-dimensional case}
\label{s.ind-loc-comp}
\subsection{Induced representations}
\label{Ind-loc}

\index{representation!induced}
\index{section!Borel}
The {\it induced representation} ${\rm Ind}_H^GS$  is the unitary representation of a group $G$
associated with a unitary representation $S:H\rightarrow U(V)$ of a closed subgroup $H$ of
the group $G$. For details, see \cite{Kir94}, Section 2.1. Suppose
that $X=H\setminus G$ is a right $G-$space and that $s:X\rightarrow G$ is a Borel
section of the projection $p:G\rightarrow X=H\backslash
G\,:\,g\mapsto Hg$. For Lie group, such a mapping $s$ can be
chosen to be smooth almost everywhere. Then every element $g\in G$
can be uniquely written in the form
\begin{equation}
\label{9.kir.6.3} g=h s(x),\,h\in H,\,\,x\in X,
\end{equation}
and thus $G$ (as a set) can be identified with $H\times X$. Under
this identification, the Haar measure on $G$ goes into a measure
equivalent to the product of a quasi-invariant measure on $X$ and a
Haar measure on $H$. More precisely, if a quasi-invariant measure
$\mu_s$ on $X$ is appropriately chosen, then the following
equalities are valid
\begin{equation}
\label{9.kir.6.4}
d_r(g)=\frac{\Delta_G(h)}{\Delta_H(h)}d\mu_s(x)d_r(h),
\end{equation}
\begin{equation}
\label{9.kir.6.5}
\frac{d\mu_s(xg)}{d\mu_s(x)}=\frac{\Delta_H(h(x,g))}{\Delta_G(h(x,g))},
\end{equation}
where $\Delta_G$ is a modular function on the group $G$ and $h(x,g)\in H$ is defined by the relation
\begin{equation}
\label{9.kir.6.6} s(x)g=h(x,g)s(xg).
\end{equation}
Recall that a {\it modular  function} on a group $G$ is a
homomorphism $G\ni t\mapsto \Delta_G(t)\in {\mathbb R}_+$ defined
by the equality $h^{L_t}=\Delta_G(t)h$, where $h$ is the right
Haar measure on $G$, $L$ is the left action  of the group $G$ on itself and
$h^{L_t}(C)=h(tC)$.
\index{group!unimodular}
\begin{rem}
\label{9.r.s:X-G}
 If the group $G$ is {\it unimodular}, i.e
$\Delta_G\equiv 1$, and it is possible to select a subgroup $K$
that is complementary to $H$ in the sense that almost every
element of $G$ can be uniquely written in the form
\begin{equation}
\label{9.kir.6.7} g=h k,\,\,h\in H,\,\,k\in K,
\end{equation}
then it is natural to identify $X=H\backslash G$ with $K$ and to
choose $s$ as the embedding of $K$ in $G$
\begin{equation}
\label{9.df.s:X-G} s:K\mapsto G.
\end{equation}
In such a case, the formula (\ref{9.kir.6.4}) assume the form
\begin{equation}
\label{9.kir.6.4'} dg=\Delta_H(h)^{-1}d_r(h)d_r(k).
\end{equation}
\end{rem}
If both $G$ and $H$ are unimodular (or, more generally, if
$\Delta_G(h)$ and $\Delta_H(h)$ coincide for $h\in H$), then there
exist a $G$-invariant measure on $X\!\!=\!\!H\backslash G$. If it is
possible to extend $\Delta_H$ to a multiplicative function on the
group $G$, then there exist a {\it quasi-in\-variant measure} on $X$
which is multiplied by the factor
$\!\frac{\Delta_H(g)}{\Delta_G(g)}\!$ under translation by $g$.

Now we can define ${\rm Ind}_H^GS$ (see \cite{Kir94}, section
2.3.). Let $S:H\rightarrow U(V)$ be a unitary representation of a
subgroup $H$ of the group $G$ in a Hilbert space $V$ and let $\mu$
be a measure on $X$ satisfying condition (\ref{9.kir.6.5}). Let
${\mathcal H}$ denote the space of all vector-valued functions $f$
on $X$ with values in $V$ such that
$$
\Vert f\Vert^2:=\int_X \Vert f(x)\Vert^2_Vd\mu(x)<\infty.
$$
Let us consider the representation $T$ given by the formula
\begin{equation}
 [T(g)f](x)=A(x,g)f(xg)=S(h)\left(\frac{d\mu_s(xg)}{d\mu_s(x)}\right)^{1/2}f(xg),
\end{equation}
where
\begin{equation}
\label{A(x,g)} A(x,g)=\left[\frac{\Delta_H(h)}{\Delta_G(h)}
\right]^{1/2}S(h),
\end{equation}
and where the element $h=h(x,g)$ is defined by formula
(\ref{9.kir.6.6}).
\begin{df}\label{9.df.ind-rep}
The representation $T$ is called the {\rm unitary induced}
representation and is denoted by ${\rm Ind}_H^GS$.
\end{df}
\index{representation!unitary induced}
\begin{rem}
\label{reg=ind} The {\rm right (or the left) regular
representation} $\rho,\lambda:G\mapsto U(L^2(G,h))$ of a locally
compact group $G$ is a particular case of the induced
representation ${\rm Ind}^G_HS$ with $H=\{e\}$ and $S=Id$. The
{\rm guasiregular representation}
is a particular
case of the induced representation
with some closed subgroup $H\subset G$ and $S=Id$.
\end{rem}

\subsection{Orbit method for finite-dimensional nilpotent group $B(n,{\mathbb R})$}
\label{orb-meth(n)}
\index{orbit method}
See Kirillov~\cite{Kir76} and \cite{Kir94}, Chapter 7, \S 2, p.129-130, for
details. "Fix the group $G_n=B(n,{\mathbb R})$ of all upper
triangular real matrices of order $n$ with ones on the main
diagonal. (The Kirillov notation for the group $B(n,{\mathbb R})$ is
$N_+(n,{\mathbb R})$).

The basic result of the method of orbits, applied to nilpotent Lie
groups, is the description of a one-to-one correspondence between
two sets:
\index{group!Lie!nilpotent}

a) the set $\hat G$ of all equivalence classes of irreducible
unitary representations of a connected and simply connected
nilpotent Lie group $G$,
\index{group!connected}

b) the set ${\mathcal O}(G)$ of all orbits of the group $G$ in the
space ${\mathfrak g}^*$ dual to the Lie algebra ${\mathfrak g}$
with respect to the coadjoint representation.
\index{orbit!set}

To construct this correspondence, we introduce the following
definition. A subalgebra ${\mathfrak h}\subset {\mathfrak g}$ is
{\it subordinate} to a functional $f\in {\mathfrak g}^*$ if
\index{subalgebra!subordinate}
$$
\langle f,[x,y]\rangle=0\quad{for\,\,\, all}\quad x,y\in {\mathfrak
h},
$$
i.e. if ${\mathfrak h}$ is an {\it isotropic subspace} with respect to the
bilinear form  defined by $B_f(x,y)=\langle f,[x,y]\rangle$ on
${\mathfrak g}$.
\index{subspace!isotropic}
\begin{lem}[Lemma 7.7, \cite{Kir94}]
\label{9.l.Kir} The following conditions are equivalent:
\par (a) a subalgebra ${\mathfrak h}$ is subordinate to the
functional $f$,
\par (b) the image of ${\mathfrak h}$ in the tangent space
$T_f\Omega$ to the orbit $\Omega$ in the point $f$ is an isotropic
subspace,
\index{space!tangent}
\par (c) the map
$$
x\mapsto\langle f,x\rangle
$$
is a one-dimensional real representation of the Lie algebra
${\mathfrak h}$.
\end{lem}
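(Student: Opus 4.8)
The plan is to prove the three-way equivalence by showing $(a)\Leftrightarrow(c)$ directly from the definitions and then linking $(b)$ to either one via the formula for the coadjoint orbit's tangent space.

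\medskip

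\textbf{Step 1: $(a)\Leftrightarrow(c)$.} First I would unwind what the map $\varphi\colon x\mapsto\langle f,x\rangle$ being a one-dimensional representation of $\mathfrak h$ means. Since the target $\mathbb R$ is abelian, $\varphi$ is a Lie algebra homomorphism precisely when it kills the bracket, i.e. $\varphi([x,y])=0$ for all $x,y\in\mathfrak h$. But $\varphi([x,y])=\langle f,[x,y]\rangle = B_f(x,y)$, so this condition is exactly $B_f|_{\mathfrak h\times\mathfrak h}\equiv 0$, which is the definition of $\mathfrak h$ being subordinate to $f$. This is essentially a tautology once the definitions are spelled out, so it costs nothing.

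\medskip

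\textbf{Step 2: Identify the tangent space $T_f\Omega$.} The key geometric input is the standard description of the coadjoint orbit: the tangent space to $\Omega = \mathrm{Ad}^*(G)f$ at the point $f$ is
$$
T_f\Omega = \{\,\mathrm{ad}^*(x)f : x\in\mathfrak g\,\} \cong \mathfrak g/\mathfrak g_f,
$$
where $\mathfrak g_f=\{x:\mathrm{ad}^*(x)f=0\}$ is the stabilizer, and where $\mathrm{ad}^*(x)f$ acts by $\langle \mathrm{ad}^*(x)f, y\rangle = -\langle f,[x,y]\rangle$. The natural symplectic (Kirillov--Kostant--Souriau) form $\omega$ on the orbit at $f$ is then $\omega_f(\mathrm{ad}^*(x)f,\mathrm{ad}^*(y)f) = \langle f,[x,y]\rangle = B_f(x,y)$, which is well-defined precisely because its radical is $\mathfrak g_f$. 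I would state this as a recalled fact (it is standard for nilpotent, indeed arbitrary, Lie groups) rather than reprove it.

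\medskip

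\textbf{Step 3: $(a)\Leftrightarrow(b)$.} The image of $\mathfrak h$ in $T_f\Omega$ is $\{\mathrm{ad}^*(x)f : x\in\mathfrak h\}$. This subspace is isotropic for $\omega_f$ iff $\omega_f(\mathrm{ad}^*(x)f,\mathrm{ad}^*(y)f)=0$ for all $x,y\in\mathfrak h$, i.e. iff $B_f(x,y)=\langle f,[x,y]\rangle = 0$ for all $x,y\in\mathfrak h$ --- again exactly subordinacy. Chaining the two equivalences finishes the proof. The only subtlety --- and the one place I would be careful --- is that ``isotropic subspace of $T_f\Omega$'' must be understood with respect to the canonical symplectic form $\omega_f$, equivalently with respect to the form induced by $B_f$ on $\mathfrak g/\mathfrak g_f$; once that identification is made explicit, there is no real obstacle, since the bilinear form $B_f$ on $\mathfrak g$ descends to a nondegenerate form on $\mathfrak g/\mathfrak g_f$ and $\mathfrak h$ being $B_f$-isotropic upstairs is equivalent to its image being isotropic downstairs. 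Thus the main (mild) point to get right is the bookkeeping of the quotient $\mathfrak g/\mathfrak g_f$ versus $T_f\Omega$ and the sign convention in $\mathrm{ad}^*$, neither of which affects the vanishing condition.
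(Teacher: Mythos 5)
Your proof is correct. Note that the paper itself gives no proof of this statement --- it is quoted verbatim as Lemma 7.7 of Kirillov's survey \cite{Kir94} --- so there is nothing internal to compare against; your argument (the tautological equivalence $(a)\Leftrightarrow(c)$, plus the identification $T_f\Omega\cong\mathfrak g/\mathfrak g_f$ carrying the Kirillov--Kostant--Souriau form induced by $B_f$, so that $B_f$-isotropy of $\mathfrak h$ upstairs is equivalent to isotropy of its image downstairs) is precisely the standard one given in that reference, and the one point needing care --- that $B_f$ descends to the quotient because its radical is exactly the stabilizer $\mathfrak g_f$ --- is handled correctly.
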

If the conditions of Lemma \ref{9.l.Kir} are satisfied, we define
the one-dimensional unitary representation $U_{f,H}$ of the group
$H=\exp {\mathfrak h}$ by the formula
$$
U_{f,H}(\exp\, x)=\exp\, 2\pi i\langle f,x\rangle.
$$
\begin{thm}[Theorem 7.2, \cite{Kir94}]
\label{9.t.Kir}
\par (a) Every irreducible unitary representation $T$ of a
connected and simply connected nilpotent Lie group $G$ has the form
 $$
T={\rm Ind}^G_H U_{f,H},
 $$
where $H\subset G$ is a connected subgroup and $f\in{\mathfrak
g}^*$;
\index{subgroup!connected}
\par (b) the representation $T_{f,H}={\rm Ind}^G_H U_{f,H}$ is
irreducible if and only if the Lie algebra ${\mathfrak h}$ of the
group $H$ is a subalgebra of ${\mathfrak g}$ subordinate to the
functional $f$ with maximal possible dimension;
\par (c) irreducible representations $T_{f_1,H_1}$ and
$T_{f_2,H_2}$ are equivalent if and only if  the functionals $f_1$
and $f_2$ belong to the same orbit of ${\mathfrak g}^*$."
 \end{thm}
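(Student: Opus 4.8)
The plan is to prove the Kirillov correspondence theorem (Theorem 7.2 of \cite{Kir94}) by the standard induction-on-dimension argument, exploiting the fact that a nilpotent Lie group has a non-trivial center and hence a codimension-one normal subgroup. First I would establish part (a) and part (b) together. Given an irreducible unitary representation $T$ of $G$, restrict it to the center $Z(G)$; by Schur's lemma $T|_{Z(G)}$ is a character, $\exp x\mapsto \exp 2\pi i\langle f_0,x\rangle$ for some $f_0$ on the Lie algebra of the center. If $Z(G)$ acts trivially one passes to a quotient of strictly smaller dimension and invokes the induction hypothesis. Otherwise, pick a one-parameter central subgroup on which the character is non-trivial, choose a codimension-one ideal $\mathfrak{g}_1\subset\mathfrak{g}$ containing a complementary central direction, and use the Mackey machine: $G_1=\exp\mathfrak{g}_1$ is normal of codimension one, so $T|_{G_1}$ decomposes over the $G$-orbit of some irreducible $T_1$ of $G_1$, which by induction is of the form ${\rm Ind}_{H_1}^{G_1}U_{f_1,H_1}$ with $\mathfrak{h}_1$ a maximal subordinate subalgebra for $f_1\in\mathfrak{g}_1^*$. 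One then shows that either $T_1$ extends to $G$ (the stabilizer case), giving $T={\rm Ind}_{G_1H}^{G}$ of an extension, or $T={\rm Ind}_{G_1}^{G}T_1$; in both cases inducing in stages and tracking how $f_1$ lifts to $f\in\mathfrak{g}^*$ produces the desired form $T={\rm Ind}_H^G U_{f,H}$ with $\mathfrak h$ maximal subordinate to $f$. Conversely, the same stages argument shows that if $\mathfrak h$ is subordinate to $f$ with maximal dimension (equivalently, using Lemma~\ref{9.l.Kir}, $\mathfrak h$ projects to a maximal isotropic, hence Lagrangian, subspace of the tangent space to the orbit, so $\dim\mathfrak h=\frac12(\dim\mathfrak g+\dim\mathfrak h_{\rm rad})$), the induced representation is irreducible; irreducibility is checked via Mackey's irreducibility criterion for induced representations, reducing to the codimension-one situation again.

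For part (c), the equivalence statement, I would argue in two directions. If $f_1,f_2$ lie on the same coadjoint orbit, write $f_2={\rm Ad}^*(g)f_1$; conjugating $H_1$ by $g$ produces a subalgebra subordinate to $f_2$, and since ${\rm Ind}$ intertwines with conjugation, $T_{f_1,H_1}\simeq T_{f_2,gH_1g^{-1}}$, while independence of the choice of the maximal subordinate subalgebra (for fixed $f$) is itself part of the theorem and is proved by the same induction. Conversely, if $T_{f_1,H_1}\simeq T_{f_2,H_2}$, the central characters must agree, which forces $f_1$ and $f_2$ to agree on the center; then descending through the codimension-one ideal and applying the induction hypothesis shows the orbits coincide. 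The key point making all of this work is that coadjoint orbits of nilpotent groups are even-dimensional affine subspaces and maximal subordinate subalgebras always exist and have the ``polarization'' dimension.

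The main obstacle, and the step I would treat most carefully, is the inductive passage through the codimension-one normal subgroup: one must verify that the little-group (stabilizer) analysis in the Mackey machine is compatible with the orbit picture, i.e.\ that the coadjoint orbit of $G$ through $f$ fibers correctly over the coadjoint orbit of $G_1$ through $f_1=f|_{\mathfrak g_1}$, and that maximality of the subordinate subalgebra is preserved under induction in stages. This bookkeeping — matching dimensions of orbits, polarizations, and the two cases (orbit of $T_1$ under $G$ is a point versus a line) — is where the real content lies; the representation-theoretic inputs (Schur's lemma, Mackey imprimitivity, the irreducibility criterion for induced representations) are used as black boxes. Since this is Theorem~7.2 of \cite{Kir94}, I would in practice simply cite it; the reconstruction above is the proof one would reproduce, and it is precisely the finite-dimensional template that Subsection~\ref{new-irred-loc} will re-prove by hand so as to generalize to $B_0^{\mathbb Z}$.
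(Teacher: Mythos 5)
The paper does not prove this statement at all: it is quoted verbatim (with quotation marks) as Theorem~7.2 of Kirillov's book \cite{Kir94} and used as a black box, so your closing remark that one would simply cite it is exactly what the paper does. Your sketch of the standard induction-on-dimension argument via a codimension-one normal subgroup and the Mackey machine is the correct textbook proof, so the proposal is consistent with (indeed more detailed than) the paper's treatment.
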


\begin{ex}
\label{Geiseberg-gr} Let us consider the Heisenberg group $G_3=B(3,{\mathbb R})$, its Lie algebra ${\mathfrak g}$ and the dual space
${\mathfrak g}^*$.  Fix the notations
\end{ex}
$$
G=B(3,{\mathbb R})=
 \left\{
 \left(\begin{smallmatrix}
1&x_{12}&x_{13}\\
0&1&x_{23}\\
0&0&1\\
\end{smallmatrix}\right)
\right\},
$$
$$
\,\, {\mathfrak g}=n_+(3,{\mathbb R})=
\left\{\left(\begin{smallmatrix}
0&x_{12}&x_{13}\\
0&0&x_{23}\\
0&0&0\\
\end{smallmatrix}\right)\right\}
,\,\,{\mathfrak g}^*= n_-(3,{\mathbb R})=
\left\{\left(\begin{smallmatrix}
0&0&0\\
y_{21}&0&0\\
y_{31}&y_{32}&0\\
\end{smallmatrix}\right)\right\}.
$$
The {\it adjoint action} ${\rm Ad}:G\rightarrow {\rm Aut}(\mathfrak g)$ of the group $G$ on
its Lie algebra ${\mathfrak g}$ is:
\begin{equation}
\label{Ad(3)} {\mathfrak g}\ni x \mapsto {\rm Ad}_t(x):=txt^{-1}\in
{\mathfrak g},\quad t\in G,
\end{equation}
the pairing between   the ${\mathfrak g}$ and ${\mathfrak g}^*$:
\begin{equation}
\label{<,>3} {\mathfrak g}^*\times {\mathfrak g}\ni (y,x)\mapsto
\langle y,x\rangle :=tr(xy)=\sum_{1\leq k<n\leq
3}x_{kn}y_{nk}\in{\mathbb R}.
\end{equation}
Since $tr(txt^{-1}y)=tr(xt^{-1}yt)$ the {\it coadjoint action} of $G$
on the dual ${\mathfrak g}^*$ to ${\mathfrak g}$ is
\begin{equation}
 \label{Ad*(3)}
 {\mathfrak g}^*\ni y \mapsto {\rm Ad}^*_t(y):=(t^{-1}yt)_-\in
{\mathfrak g}^*,\quad t\in G,
\end{equation}
where $(z)_-$ means that we take  lower triangular part of the
matrix $z$.
\index{action!adjoint}
\index{action!coadjoint}

To calculate ${\rm Ad}^*_t(y)$ explicitly for $n=3$, we have
$$
t^{-1}yt=\left(\begin{smallmatrix}
1&t_{12}&t_{13}\\
0&1&t_{23}\\
0&0&1\\
\end{smallmatrix}\right)^{-1}
\left(\begin{smallmatrix}
0&0&0\\
y_{21}&0&0\\
y_{31}&y_{32}&0\\
\end{smallmatrix}\right)
\left(\begin{smallmatrix}
1&t_{12}&t_{13}\\
0&1&t_{23}\\
0&0&1\\
\end{smallmatrix}\right)
$$
$$
 =\left(\begin{smallmatrix}
1&-t_{12}&-t_{13}+t_{12}t_{23}\\
0&1&-t_{23}\\
0&0&1\\
\end{smallmatrix}\right)
\left(\begin{smallmatrix}
0&0&0\\
y_{21}&y_{21}t_{12}&y_{21}t_{13}\\
 y_{31}&  y_{31}t_{12}+y_{32}&y_{31}t_{13}+y_{32}t_{23}\\
\end{smallmatrix}\right),
$$
hence
$$ {\rm Ad}^*_t(y):=(t^{-1}yt)_-=
\left(\begin{smallmatrix}
0                   &0&0\\
y_{21}- t_{23}y_{31}&0&0\\
y_{31}              &y_{31}t_{12}+y_{32}&0\\
\end{smallmatrix}\right).
$$
We have two type of the orbits ${\mathcal O}$:
\par 1) if $y_{31}=0$, then
$\left(\begin{smallmatrix} y_{21}&\\
0&y_{32}\\
\end{smallmatrix}\right)\!\simeq\!(y_{21},y_{32})$ for fixed $y_{21},\,y_{32}$ is
$0$-dimensional orbit;
\par 2) if $y_{31}\not=0$, then $\left(\begin{smallmatrix} {\mathbb R}&\\
y_{31}&{\mathbb R}
\end{smallmatrix}\right)$ is $2$-dimensional orbits.

In the case 1) fixe the point $f=(y_{21},y_{32})$, the subordinate
subalgebra ${\mathfrak h}$ coinside with all ${\mathfrak g}$,
since $[{\mathfrak g},{\mathfrak g}]=\langle E_{13}\rangle:=\{tE_{13}\mid t\in{\mathbb R}\}$.
Corresponding one-dimensional representation of the algebra
${\mathfrak h}={\mathfrak g}$ is
$$
{\mathfrak g}\ni x\mapsto \langle f,x\rangle=
tr(xf)=
tr\left[ \left(\begin{smallmatrix}
0&x_{12}&x_{13}\\
0&0&x_{23}\\
0&0&0\\
\end{smallmatrix}\right)\left(\begin{smallmatrix}
0&0&0\\
y_{21}&0&0\\
0&y_{32}&0\\
\end{smallmatrix}\right)
\right] =x_{12}y_{21}+x_{23}y_{32}\in {\mathbb R}.
$$
The corresponding representation of the group $G$ is
\begin{equation}
\label{Orb(3),0} G\ni \exp(x)\mapsto \exp (2\pi i\langle
f,x\rangle)\in S^{1}.
\end{equation}
So we have 1-dimensional representation
$$
G_3\ni \exp\left(\begin{smallmatrix}
0&x_{12}&x_{13}\\
0&0&x_{23}\\
0&0&0\\
\end{smallmatrix}\right)\mapsto \exp (2\pi i(x_{12}y_{21}+x_{23}y_{32}))\in
S^{1}.
$$
We note that
$$
\exp(x)=\exp\left(\begin{smallmatrix}
0&x_{12}&x_{13}\\
0&0&x_{23}\\
0&0&0\\
\end{smallmatrix}\right)=\left(\begin{smallmatrix}
1&x_{12}&x_{13}+\frac{1}{2}x_{12}x_{23}\\
0&1&x_{23}\\
0&0&1\\
\end{smallmatrix}\right).
$$
In the case 2) we have two subordinate subalgebras of  the maximal dimension
$$
{\mathfrak h}_1= \left(\begin{smallmatrix}
0&0&x_{13}\\
0&0&x_{23}\\
0&0&0\\
\end{smallmatrix}\right),\quad\text{and}\quad
{\mathfrak h}_2= \left(\begin{smallmatrix}
0&x_{12}&x_{13}\\
0&0&0\\
0&0&0\\
\end{smallmatrix}\right).\quad\text{ Set}\quad f=\left(\begin{smallmatrix}
0&0&0\\
y_{21}&0&0\\
y_{31}&y_{32}&0\\
\end{smallmatrix}\right).
$$
The corresponding one-dimensional representations of the subalgebras ${\mathfrak h}_i,\,i=1,2$
are
$$
{\mathfrak h}_1\ni x\mapsto  \langle
f,x\rangle=x_{13}y_{31}+x_{23}y_{32}\in {\mathbb R},
$$
$$
{\mathfrak h}_2\ni x\mapsto \langle
f,x\rangle=x_{12}y_{21}+x_{13}y_{31}\in {\mathbb R}.
$$
 The corresponding representations $S$ of the subgroups
$H_1$ and $H_2$ respectively are:
$$
H_1\ni\left(\begin{smallmatrix}
1&0&x_{13}\\
0&1&x_{23}\\
0&0&1\\
\end{smallmatrix}\right)= \exp(x)\mapsto \exp(2\pi
i(x_{13}y_{31}+x_{23}y_{32}))\in S^1,
$$
$$
H_2\ni\left(\begin{smallmatrix}
1&x_{12}&x_{13}\\
0&1&0\\
0&0&1\\
\end{smallmatrix}\right)= \exp(x)\mapsto \exp(2\pi
i(x_{12}y_{21}+x_{13}y_{31}))\in S^1.
$$
In the case $H_1$ we have the decomposition $G_3={\mathbb R}^2
\ltimes B(2,{\mathbb R})\simeq H_1\ltimes {\mathbb R}$, indeed we
have
$$
G_3\ni\left(\begin{smallmatrix}
1&x_{12}&x_{13}\\
0&1&x_{23}\\
0&0&1\\
\end{smallmatrix}\right)=\left(\begin{smallmatrix}
1&0&x_{13}\\
0&1&x_{23}\\
0&0&1\\
\end{smallmatrix}\right)\left(\begin{smallmatrix}
1&x_{12}&0\\
0&1&0\\
0&0&1\\
\end{smallmatrix}\right)\in {\mathbb R}^2\ltimes B(2,{\mathbb R}),
$$
hence the space $X=H_1\backslash G_3$ is isomorphic to $B(2,{\mathbb R})\simeq{\mathbb R}$ and $s$ can be choosing as  the {\it embedding}
$s:B(2,{\mathbb R})\mapsto B(3,{\mathbb R})$.
$$
B(2,{\mathbb R})\ni \left(\begin{smallmatrix}
1&x\\
0&1\\
\end{smallmatrix}\right)=:x\mapsto s(x)=\left(\begin{smallmatrix}
1&x&0\\
0&1&0\\
0&0&1\\
\end{smallmatrix}\right)\in B(3,{\mathbb R}).
$$
For general $n$ we have
\begin{equation}
B(n+1,{\mathbb R})={\mathbb R}^n \ltimes B(n,{\mathbb R}).
\end{equation}

To calculate the right action of $G$ on $X$ i.e. to find $h(x,t)$
such that
$$
s(x)t=h(x,t)s(xt),
$$
we have for $x\in B(2,{\mathbb R})$ and $t\in B(3,{\mathbb R})$
$$
s(x)t=\left(\begin{smallmatrix}
1&x&0\\
0&1&0\\
0&0&1\\
\end{smallmatrix}\right)\left(\begin{smallmatrix}
1&t_{12}&t_{13}\\
0&1&t_{23}\\
0&0&1\\
\end{smallmatrix}\right)=\left(\begin{smallmatrix}
1&x+t_{12}&t_{13}+xt_{23}\\
0&1&t_{23}\\
0&0&1\\
\end{smallmatrix}\right)
=\left(\begin{smallmatrix}
1&0&t_{13}+xt_{23}\\
0&1&t_{23}\\
0&0&1\\
\end{smallmatrix}\right)
\left(\begin{smallmatrix}
1&x+t_{12}&0\\
0&1&0\\
0&0&1\\
\end{smallmatrix}\right)
$$
$$=h(x,t)s(xt),
\text{\,\,hence\,\, }h(x,t)=\left(\begin{smallmatrix}
1&0&t_{13}+xt_{23}\\
0&1&t_{23}\\
0&0&1\\
\end{smallmatrix}\right).
$$
 Finally, the induced unitary representation ${\rm Ind}_{H_1}^GS$
have the following form in the Hilbert  space $L^2({\mathbb R}, dx)$
(case $H_1$ and $f=y_{31}E_{31}$):
\begin{equation}
\label{Orb(3),2} f(x)\mapsto S(h(x,t))f(xt)=\exp (2\pi
i(t_{13}+t_{23}x)y_{31})f(x+t_{12}).
\end{equation}
In the Kirillov \cite{Kir94} notations we have:
$$
f(x)\mapsto \exp (2\pi i(c+bx)\lambda)f(x+a),\quad
y_{31}=\lambda,\,\, \left(\begin{smallmatrix}
1&t_{12}&t_{13}\\
0&1&t_{23}\\
0&0&1\\
\end{smallmatrix}\right)=\left(\begin{smallmatrix}
1&a&c\\
0&1&b\\
0&0&1\\
\end{smallmatrix}\right).
$$
%
\subsection{The induced  representations, corresponding to a generic orbits, finite-dimensional case}
\label{orb-gen-loc}
We show following A.~Kirillov \cite{Kir94} how the orbit method works for the nilpotent group
$B(n,{\mathbb R})$ and small $n$.

For general $n\in{\mathbb N}$ the coadjoint action of the group $G_n$ on ${\mathfrak g}$ is as follows
$$
t=I\!+\!\!\sum_{1\leq k<m\leq n}t_{km}E_{km},\,\, y=\sum_{1\leq
m<k\leq n}y_{km}E_{km},\,\,t^{-1}:=I\!+\!\!\sum_{1\leq k<m\leq
n}t_{km}^{-1}E_{km}
$$
hence
$$
(tyt^{-1})_{pq}=\sum_{m=1}^q(ty)_{pm}t^{-1}_{mq}=\sum_{m=1}^q\sum_{r=p}^nt_{pr}y_{rm}t^{-1}_{mq},\,\,
1\leq p,q\leq n,
$$
and
\begin{equation} \label{9.Ad(n)} {\rm
Ad}^*_t(y)=(t^{-1}yt)_-=I+\sum_{1\leq q<p\leq
n}(t^{-1}yt)_{pq}E_{pq}.
\end{equation}
\begin{ex}
\label{Gen-orbit.n} {\rm Generic orbits} for the group
$G=B(n,{\mathbb R})$ {\rm (}see {\rm \cite{Kir94}}, Example 7.9{\rm
)}.
\end{ex}
``The form of the action ${\rm Ad}^*_t(y)=(t^{-1}yt)_-$ implies,
that ${\rm Ad}^*_t,\,t\in G$ acts as follows: to a given column of
$y\in {\mathfrak g}^*$, a linear combination of the previous columns
is added and to a given row of $y$,  a linear combination of the
following rows is added. More generally, the minors
$\Delta_k,\,k=1,2,...,[\frac{n}{2}]$, consisting of the last $k$
rows and first $k$ columns of $y$ are invariant of the action. {\it
It is possible to show that if all the numbers $c_k$ are different
from zeros, then the manifold given by the equation
\begin{equation}
\label{Gen-orb}
\Delta_k=c_k,\,\,1\leq k\leq  \left[\frac{n}{2}\right]
\end{equation}
is a $G$-orbit in} ${\mathfrak g}^*$. Hence generic orbits have
codimension equal to $[\frac{n}{2}]$ and dimension equal to
$\frac{n(n-1)}{2}-[\frac{n}{2}]$. To obtain a representation for
such an orbit, we can take a matrix $y$ of the form
$$
y=\left(\begin{smallmatrix} 0&0\\
\Lambda&0\\
\end{smallmatrix}\right),
$$
where $\Lambda$ is the matrix of order $[\frac{n}{2}]$ such that all
nonzero elements are contained in the {\it anti-diagonal}. It is easy to
find a subalgebra of dimension $[\frac{n}{2}]\times[\frac{n+1}{2}]$
subordinate to the functional $y$. It consist of all matrices of
the form
$$
\left(\begin{smallmatrix} 0&A\\
0&0\\
\end{smallmatrix}\right),
$$
where $A$ is an $[\frac{n}{2}]\times[\frac{n+1}{2}]$ or
$[\frac{n+1}{2}]\times [\frac{n}{2}]$ matrix.''
\begin{ex}
\label{B_5-orb}  Let $G=B(5,{\mathbb R}),\,\, {\mathfrak
g}=n_+(5,{\mathbb R}),\,\, {\mathfrak g}^*= n_-(5,{\mathbb R}).$ We
write the representations for generic orbit corresponding to the
point $y=y_{51}E_{51}+y_{42}E_{42}\in {\mathfrak g}^*$. Set ${\mathfrak h}_3=\{t-I\mid t\in
H_3\}$ where
\end{ex}
$$
G=\left\{\left(\begin{smallmatrix}
1&x_{12}&x_{13}&x_{14}&x_{15}\\
0&1&x_{23}&x_{24}&x_{25}\\
0&0&1&x_{34}&x_{35}\\
0&0&0&1&x_{45}\\
0&0&0&0&1\\
\end{smallmatrix}\right)\right\},\,\,\,
H_3=\left\{\left(\begin{smallmatrix}
1&0&0&t_{14}&t_{15}\\
0&1&0&t_{24}&t_{25}\\
0&0&1&t_{34}&t_{35}\\
0&0&0&1&0\\
0&0&0&0&1\\
\end{smallmatrix}\right)\right\}
,\,\,{\mathfrak g}^*=\left\{\left(\begin{smallmatrix}
0&0&0&0&0\\
y_{21}&0&0&0&0\\
y_{31}&y_{32}&0&0&0\\
y_{41}&y_{42}&y_{43}&0&0\\
y_{51}&y_{52}&y_{53}&y_{54}&0\\
\end{smallmatrix}\right)\right\}.
$$
The corresponding representation $S$ of the subgroup $H_3$ of the
maximal dimension  is:
$$
H_3 \ni t\mapsto \exp(2\pi i\langle y_,(t-I)\rangle)=\exp(2\pi
i[t_{15}y_{51}+t_{24}y_{42}])\in S^1.
$$
For the group $B(5,{\mathbb R})$ holds the following decomposition
\begin{equation}
\label{B(5,R)=decom} B(5,{\mathbb
R})=B_{3}B(3)B^{(3)}\text{\,\,i.e.\,\,}x=x_{3}x(3)x^{(3)},
\end{equation}
where
$$
 B^{(3)} =\left\{\left(\begin{smallmatrix}
1&x_{12}&x_{13}&0&0\\
0&1&x_{23}&0&0\\
0&0&1&0&0\\
0&0&0&1&0\\
0&0&0&0&1\\
\end{smallmatrix}\right)\right\}
,\,\, B(3)=\left\{\left(\begin{smallmatrix}
1&0&0&x_{14}&x_{15}\\
0&1&0&x_{24}&x_{25}\\
0&0&1&x_{34}&x_{35}\\
0&0&0&1&0\\
0&0&0&0&1\\
\end{smallmatrix}\right)\right\}
,\,\, B_3=\left\{\left(\begin{smallmatrix}
1&0&0&0&0\\
0&1&0&0&0\\
0&0&1&0&0\\
0&0&0&1&x_{45}\\
0&0&0&0&1\\
\end{smallmatrix}\right)\right\}.
$$
We {\it calculate $h(x,t)$ in the relation} $s(x)t=h(x,t)s(xt)$, but first we fix the section $s:X=H\backslash G\mapsto G$ of the projection
$p:G\mapsto X$. To define the section $s:X\mapsto G$ we show that in addition to the decomposition (\ref{B(5,R)=decom})
the following decomposition $B(5,{\mathbb R})=B(3)B_{3}B^{(3)}$ also holds. Indeed, to find $h\in H_3=B(3)$
such that $x=hx_{3}x^{(3)}$, we get $x_{3}x(3)x^{(3)}=hx_{3}x^{(3)}$, hence
$$
h=x_{3}x(3)x_{3}^{-1}=
\left(\begin{smallmatrix}
1&0&0&x_{14}&x_{15}\\
0&1&0&x_{24}&x_{25}\\
0&0&1&x_{34}&x_{35}\\
0&0&0&1&x_{45}\\
0&0&0&0&1\\
\end{smallmatrix}\right)\left(\begin{smallmatrix}
1&0&0&0&0\\
0&1&0&0&0\\
0&0&1&0&0\\
0&0&0&1&-x_{45}\\
0&0&0&0&1\\
\end{smallmatrix}\right)=
\left(\begin{smallmatrix}
1&0&0&x_{14}&x_{15}-x_{14}x_{45}\\
0&1&0&x_{24}&x_{25}-x_{24}x_{45}\\
0&0&1&x_{34}&x_{35}-x_{34}x_{45}\\
0&0&0&1&0\\
0&0&0&0&1\\
\end{smallmatrix}\right)
\in B(3).
$$
We have two different decompositions
$$
B_{3}B(3)B^{(3)}\ni x_{3}x(3)x^{(3)}=hx_{3}x^{(3)}\in B(3)B_{3}B^{(3)},\,\,\,\,\text{with}\,\,\,\,
h=x_{3}x(3)x_{3}^{-1}.
$$
\begin{rem}
\label{9.B(n,R)=LAU}
For an arbitrary $n,\,\,m\in {\mathbb N},\,\,1<m<n,$ we have
for the group $G_n=B(n,\mathbb R)$ two decompositions:
\begin{equation}
\label{2decompos} G_n\!=\!B_{m}B(m)B^{(m)}\ni
x_{m}x(m)x^{(m)}=hx_{m}x^{(m)}\in B(m)B_{m}B^{(m)},\,\,
h\!=\!x_{m}x(m)x_{m}^{-1},
\end{equation}
where
$$
B_{m}=\{I+\!\!\sum_{m< k<r\leq n}x_{kr}E_{kr}\},\,\,B(m)=
\{I+\!\!\sum_{1\leq k\leq m <r\leq n}x_{kr}E_{kr}\},\,\,
B^{(m)}=\{I+\!\!\sum_{1\leq k<r\leq m}x_{kr}E_{kr}\}.
$$
\end{rem}
Since $X=B(m)\backslash G_n$ is isomorphic to $B_{m}B^{(m)}$ by
decomposition (\ref{2decompos}), the section $s$ can be choosing,
by Remark~\ref{9.r.s:X-G}, as  the embedding
$$
B_{m}B^{(m)}\ni x_mx^{(m)}\mapsto s(x_mx^{(m)})=x_mx^{(m)}\in B_{m}B(m)B^{(m)}.
$$
Since $s(x)t=h(x,t)s(xt)$, we have $h(x,t)=s(x)t(s(xt))^{-1}$. It
remains to calculate $s(x)t$ and $s(xt)$.
\begin{rem}
\label{h(x,t)=e} We have
$$
h(x,t)-I=\left\{\begin{array}{ccc}
0,&\text{for}& t\in B_{m}B^{(m)}\\
x^{(m)}(t-I)x_m^{-1}, &\text{for}& t\in B(m)
\end{array}\right..
$$
\end{rem}
Indeed, let $t=t_{m}t^{(m)}\in B_{m}B^{(m)}$ then
$s(x)t=x_{m}x^{(m)}t_{m}t^{(m)}=x_{m}t_{m}x^{(m)}t^{(m)}$. We get
also $xt=x_{m}x^{(m)}t_{m}t^{(m)}=x_{m}t_{m}x^{(m)}t^{(m)}$, so
$s(xt)\!=\!x_{m}t_{m}x^{(m)}t^{(m)}$, hence $s(x)t\!=\!s(xt)$ and
we get $h(x,t)\!=\!e$. For $t:=t(m)\in B(m)$ and $x=x_mx^{(m)}\in
B_mB(m)$  we get
$$
s(x)t=x_mx^{(m)}t=x_mx^{(m)}t(x^{(m)})^{-1}x^{(m)}=x_m{\tilde x}(m)x^{(m)}=hx_mx^{(m)}=h(x,t)s(xt),
$$
where ${\tilde x}(m)=x^{(m)}t(x^{(m)})^{-1}$. Then we get by (\ref{2decompos})
\begin{equation}
\label{h(x,t)}
h(x,t)=h=x_m{\tilde x}(m)x_m^{-1}=x_mx^{(m)}t(x^{(m)})^{-1}x_m^{-1}=x_mx^{(m)}t(x_mx^{(m)})^{-1},
\end{equation}
\begin{equation}
\label{h(x,t)-m}
h(x,t)=
\left(\begin{smallmatrix}
x^{(m)}&0\\
0&x_m\\
\end{smallmatrix}\right)
\left(\begin{smallmatrix}
1&t-I\\
0&1\\
\end{smallmatrix}\right)
\left(\begin{smallmatrix}
(x^{(m)})^{-1}&0\\
0&x_m^{-1}\\
\end{smallmatrix}\right)
=
\left(\begin{smallmatrix}
1&x^{(m)}(t-I)x_m^{-1}\\
0&1\\
\end{smallmatrix}\right)=
\left(\begin{smallmatrix}
1&H(x,t)\\
0&1\\
\end{smallmatrix}\right),
\end{equation}
where
\begin{equation}
\label{H(x,t)} H(x,t):= x^{(m)}(t-I)x_m^{-1}.
\end{equation}
Denote by   $E_{kr}(t):=I+tE_{kr},\,\,t\in{\mathbb R}$ the one-parameter subgroups of the groups
$B(n,{\mathbb R})$. We would like {\it to find the generators $A_{kn}=\frac{d}{dt}T_{I+tE_{kn}}\vert_{t=0}$
of the induced representation} $T_t$ (\ref{Ind(B_n)}).

Set for $G_n=\!B_{m}B(m)B^{(m)}$ and $1\leq k\leq m< r\leq n$
\begin{equation}
\label{S_kr(t)}
 S_{kr}(t_{kr}):=\langle
y,(h(x,E_{kr}(t_{kr}))-I)\rangle,\,\,\text{then}\,\,
A_{kr}=\frac{d}{dt}\exp(2\pi i S_{kr}(t))\vert_{t=0}=2\pi i S_{kr}(1).
\end{equation}
Let us denote by ${\mathbb S}$ the following matrix:
\begin{equation}
\label{def:S=(S_kr)}
{\mathbb S}=(S_{kr})_{1\leq k\leq m< r\leq n},\quad \text{where}\quad S_{kr}=S_{kr}(1),
\quad \text{then}\quad {\mathbb S}=(2\pi i)^{-1}(A_{kr})_{k,r}.
\end{equation}
\begin{lem}
\label{l.tr(E_knB)=B^t} Let  $B=(b_{kr})_{k,r=1}^n\in {\rm
Mat}(n,{\mathbb C})$. Define the matrix $C=(c_{kr})_{k,r=1}^n\in
{\rm Mat}(n,{\mathbb C})$ by
\begin{equation}
\label{tr(E_knB)=B^t}
 c_{kr}={\rm tr}(E_{kr}B),\quad 1\leq k,r\leq n,\quad
\text{then we have}\quad C=B^T,
\end{equation}
where $E_{kr}$ are matrix units and  $B^T$ means transposed matrix to the matrix $B$. The equality $C=B^T$ holds also in the case when
$B$ is an arbitrary $m\times n$ rectangular matrix. The statement is true also for matrices $B \in {\rm
Mat}(\infty,{\mathbb C})$.
\end{lem}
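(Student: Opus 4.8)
The plan is to reduce the claim to the defining property of the matrix units $E_{kr}$, namely $(E_{kr})_{ij}=\delta_{ik}\delta_{jr}$, and then simply read off the relevant products. First I would compute, for $1\le i,j\le n$,
\[
(E_{kr}B)_{ij}=\sum_{l}(E_{kr})_{il}\,b_{lj}=\sum_{l}\delta_{ik}\delta_{lr}\,b_{lj}=\delta_{ik}\,b_{rj},
\]
so that $E_{kr}B$ is the matrix whose $k$-th row is the $r$-th row of $B$ and whose remaining rows vanish.

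The second step is to take the trace:
\[
c_{kr}={\rm tr}(E_{kr}B)=\sum_{i}(E_{kr}B)_{ii}=\sum_{i}\delta_{ik}\,b_{ri}=b_{rk}.
\]
Hence $c_{kr}=b_{rk}=(B^T)_{kr}$ for all $k,r$, which is precisely the assertion $C=B^T$ in the square case.

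For the rectangular case I would take, when $B$ is an $m\times n$ matrix, the matrix unit $E_{kr}$ to be of size $n\times m$, so that $E_{kr}B$ is an honest $n\times n$ matrix; the two displays above then hold verbatim with the internal summation index $l$ running over $1,\dots,m$, and they give $c_{kr}=b_{rk}$ for $1\le k\le n$, $1\le r\le m$, i.e. $C=B^T\in{\rm Mat}(n\times m,\bC)$. For $B\in{\rm Mat}(\infty,\bC)$ nothing further is needed: the one potentially delicate point, whether ${\rm tr}(E_{kr}B)$ is well defined, is harmless because $E_{kr}$ has rank one, so the trace sum $\sum_i\delta_{ik}b_{ri}$ collapses to the single term $b_{rk}$ no matter how large the matrices are. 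Thus the only obstacle worth naming is keeping the index conventions straight in the non-square and infinite settings, and that is exactly where the reduction of the trace to one summand makes the argument valid; otherwise the lemma is an immediate consequence of the definition of matrix multiplication.
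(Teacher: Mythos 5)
Your proof is correct and follows essentially the same route as the paper, whose entire argument is the single observation that ${\rm tr}(E_{kr}B)=b_{rk}$; your computation merely spells out the intermediate step $(E_{kr}B)_{ij}=\delta_{ik}b_{rj}$ before summing the diagonal. The extra care you take with index conventions in the rectangular and infinite cases is sound and goes slightly beyond what the paper writes down, but it introduces no new idea.
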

\begin{proof} Indeed, we have ${\rm tr}(E_{kr}B)=b_{rk}$.
\end{proof}

We calculate now the matrix  ${\mathbb S}(t)=(S_{kr}(t_{kr}))_{k,r}$ and the matrix ${\mathbb S}=
(S_{kr}(1))_{k,r}$ using Lemma~\ref{l.tr(E_knB)=B^t}. Using (\ref{H(x,t)}) we have
$$
\langle y,h(x,t)-I\rangle={\rm tr}\left(H(x,t)y\right)={\rm tr}\left(x^{(m)}t_0x_m^{-1}y\right)=
{\rm tr}\left(t_0x_m^{-1}yx^{(m)}\right)={\rm tr}\left(t_0B(x,y)\right),
$$
where $t_0=t-I$ and
\begin{equation}
\label{B(x,y)}
B(x,y)=x_m^{-1}yx^{(m)}\cong
\left(\begin{smallmatrix}
1&0\\
0&x_m^{-1}\\
\end{smallmatrix}\right)
\left(\begin{smallmatrix}
0&0\\
y&0\\
\end{smallmatrix}\right)
\left(\begin{smallmatrix}
x^{(m)}&0\\
0&1\\
\end{smallmatrix}\right)=
\left(\begin{smallmatrix}
0&0\\
x_m^{-1}yx^{(m)}&0\\
\end{smallmatrix}\right).
\end{equation}
By definition we have
$$
S_{kr}(t_{kr})=\langle y,(h(x,E_{kr}(t_{kr}))-I)\rangle={\rm tr}(t_{kr}E_{kr}B(x,y)),
$$
hence by Lemma~\ref{l.tr(E_knB)=B^t} and (\ref{B(x,y)}) we conclude that
\begin{equation}
\label{S=B^T}
{\mathbb S}=(S_{kr}(1))_{kr}=\left({\rm tr}\left(E_{kr}B(x,t)\right)\right)_{k,r}=B^T(x,y)=
(x^{(m)})^Ty^T(x_m^{-1})^T=
\left(\begin{smallmatrix}
0&(x^{(m)})^Ty^T(x_m^{-1})^T\\
0&0\\
\end{smallmatrix}\right).
\end{equation}
So the  induced
representation ${\rm Ind}^G_H(S):G\to U(L^2(X,\mu))$ corresponding
to the point $y\in {\mathfrak g}^*$ has
the following form
\begin{equation}\label{Ind(B_n)}
(T_tf)(x)\!=\!S(h(x,t))\left(\frac{d\mu(xt)}{d\mu(x)}\right)^{1/2}\!
\!\!f(xt),\,\,f\in L^2(X,\mu),\,\,x\in
X=H\backslash G,\,t\in G,
\end{equation}
where
\begin{equation}
\label{9.S(h)=exp(2pi tr(B))} S(h(x,t))=\exp( 2\pi i\langle y,(h(x,t)-I)\rangle)=\exp\Big(2\pi i
{\rm tr}\left((t-I)B(x,y)\right) \Big).
\end{equation}
We {\it calculate $B(x,y)$ and ${\mathbb S}$ for different groups}
$G_n$. For $G_5$ we get by (\ref{B(x,y)}):
$$
G_5=\left\{\left(\begin{smallmatrix}
1&x_{12}&x_{13}&x_{14}&x_{15}\\
0&1&x_{23}&x_{24}&x_{25}\\
0&0&1&x_{34}&x_{35}\\
0&0&0&1&x_{45}\\
0&0&0&0&1\\
\end{smallmatrix}\right)\right\},\,\,
y=\left(\begin{smallmatrix}
0&0&0&0&0\\
0&0&0&0&0\\
0&0&0&0&0\\
0&y_{42}&0&0&0\\
y_{51}&0&0&0&0\\
\end{smallmatrix}\right),\,\,
x^{(3)}=\left(\begin{smallmatrix}
1&x_{12}&x_{13}\\
0&1&x_{23}\\
0&0&1\\
\end{smallmatrix}\right),\,\,
x_3=
\left(\begin{smallmatrix}
1&x_{45}\\
0&1\\
\end{smallmatrix}\right),
$$
$$
B(x,y)=
\left(\begin{smallmatrix}
1&x_{45}^{-1}\\
0&1\\
\end{smallmatrix}\right)
\left(\begin{smallmatrix}
0&y_{42}&0\\
y_{51}&0&0\\
\end{smallmatrix}\right)
\left(\begin{smallmatrix}
1&x_{12}&x_{13}\\
0&1&x_{23}\\
0&0&1\\
\end{smallmatrix}\right)=
\left(\begin{smallmatrix}
x_{45}^{-1}y_{51}&y_{42}+x_{45}^{-1}y_{51}x_{12}&y_{42}x_{23}+x_{45}^{-1}y_{51}x_{13}\\
y_{51}& y_{51}x_{12}& y_{51}x_{13} \\
\end{smallmatrix}\right),
$$
hence by (\ref{S=B^T}) we have
\begin{equation}
\label{S_5}
{\mathbb S}:=B(x,y)^T= \left(\begin{smallmatrix}
1&0&0\\
x_{12}&1&0\\
x_{13}&x_{23}&1\\
\end{smallmatrix}\right)
\left(\begin{smallmatrix}
0&y_{51}\\
y_{42}&0\\
0&0\\
\end{smallmatrix}\right)
\left(\begin{smallmatrix}
1&x_{45}^{-1}\\
0&1\\
\end{smallmatrix}\right)=\left(\begin{smallmatrix}
x_{45}^{-1}y_{51}&y_{51}\\
y_{42}+x_{45}^{-1}y_{51}x_{12}&y_{51}x_{12}\\
y_{42}x_{23}+x_{45}^{-1}y_{51}x_{13}&y_{51}x_{13}\\
\end{smallmatrix}\right).
\end{equation}
\begin{rem}\label{x^{-1}:=}
For the matrix $x=I+\sum_{1\leq k<n\leq m}x_{kn}E_{kn}\in
B(m,{\mathbb R})$ we denote by $x_{kn}^{-1}$ the matrix elements of
the matrix $x^{-1}$, i.e. $ x^{-1}=:I+\sum_{1\leq k<n\leq
m}x_{kn}^{-1}E_{kn}\in B(m,{\mathbb R}). $ The explicit expressions
for $x^{-1}_{kn}$ are as follows (see \cite{Kos88}, formula (4.4))
$x^{-1}_{kk+1}=-x_{kk+1}$,
\begin{equation}
\label{x^{-1}_{kn}:=}
x_{kn}^{-1}=-x_{kn}+\sum_{r=1}^{n-k-1}(-1)^{r-1}
\sum_{k<i_1<i_2<...<i_r<n}x_{ki_1}x_{i_1i_2}...x_{i_rn},\,\,k<n-1.
\end{equation}
\end{rem}
{\it The generators} $A_{kn}=\frac{d}{dt}T_{I+tE_{kn}}\vert_{t=0}$
of the one-parameter subgroups
$E_{kn}(t):=I+tE_{kn},\,\,t\in{\mathbb R}$ generated by {\it the
representation} $T_t$ (\ref{Ind(B_n)}) are as follows (see
(\ref{def:S=(S_kr)}) and (\ref{S_5})):
\begin{equation}
\label{9.A_kn-B-5}
A_{12}=D_{12},\quad A_{13}=D_{13},\quad A_{23}=x_{12}D_{13}+D_{23}, \quad A_{45}=D_{45},
\end{equation}
\begin{equation}
\label{9.A_kn-B-5(3)}
{\mathbb S}=
\frac{1}{2\pi i }\left(\begin{smallmatrix}
A_{14}&A_{15}\\
A_{24}&A_{25}\\
A_{34}&A_{35}\\
\end{smallmatrix}\right)=
\left(\begin{smallmatrix}
x_{45}^{-1}y_{51}&y_{51}\\
y_{42}+x_{45}^{-1}y_{51}x_{12}&y_{51}x_{12}\\
y_{42}x_{23}+x_{45}^{-1}y_{51}x_{13}&y_{51}x_{13}\\
\end{smallmatrix}\right),
\end{equation}
where $D_{kn}=\frac{\partial}{\partial x_{kn}}$. For example, to obtain the expression $A_{23}=x_{12}D_{13}+D_{23}$ we note that
$$
B(3,{\mathbb R})\ni x(I+tE_{23})=
\left(\begin{smallmatrix}
1&x_{12}&x_{13}\\
0&1&x_{23}\\
0&0&1\\
\end{smallmatrix}\right)
\left(\begin{smallmatrix}
1&0&0\\
0&1&t\\
0&0&1\\
\end{smallmatrix}\right)=
\left(\begin{smallmatrix}
1&x_{12}&x_{13}+tx_{12}\\
0&1&x_{23}+t\\
0&0&1\\
\end{smallmatrix}\right).
$$
Here we denote by $D_{kn}=D_{kn}(h)$ the operator of the partial
derivative corresponding to the shift $x\mapsto x+tE_{kn}$  on the
group $B_m\times B^{(m)}\ni x=(x_{kn})_{k,n}$ and the Haar measure
$h$:
\begin{equation}
\label{D_{kn}(h)}
(D_{kn}(h)f)(x)=\frac{d}{dt}\left(\frac{dh(x+tE_{kn})}{dh(x)}\right)^{1/2}f(x+tE_{kn})\mid_{t=0},\quad
D_{kn}(h):=\frac{\partial}{\partial x_{kn}}.
\end{equation}
\begin{ex}
\label{B_4-orb}  Let $G=B(4,{\mathbb R})=\left\{\left(\begin{smallmatrix}
1&x_{23}&x_{24}&x_{25}\\
0&1     &x_{34}&x_{35}\\
0&0     &1&x_{45}\\
0&0     &0&1\\
\end{smallmatrix}\right)\right\}$. The representations for generic orbit corresponding to the
point $y=y_{43}E_{43}+y_{52}E_{52}\in {\mathfrak g}^*$.
\end{ex}
We calculate ${\mathbb S}$ in two different ways. First using (\ref{B(x,y)}) we get
$$
B(x,y)=x_m^{-1}yx^{(m)}=
\left(\begin{smallmatrix}
1&x_{45}^{-1}\\
0&1\\
\end{smallmatrix}\right)
\left(\begin{smallmatrix}
0&y_{43}\\
y_{52}&0\\
\end{smallmatrix}\right)
\left(\begin{smallmatrix}
1&x_{23}\\
0&1\\
\end{smallmatrix}\right)=
\left(\begin{smallmatrix}
x_{45}^{-1}y_{52}&y_{43}+x_{45}^{-1}y_{52}x_{23}\\
y_{52}&x_{23}y_{52}\\
\end{smallmatrix}\right),
$$
$$
\frac{1}{2\pi i}\left(\begin{smallmatrix}
A_{24}&A_{25}\\
A_{34}&A_{35}\\
\end{smallmatrix}\right)=
{\mathbb S}= B^T(x,y)=\left(\begin{smallmatrix}
1&0\\
x_{23}&1\\
\end{smallmatrix}\right)
\left(\begin{smallmatrix}
0&y_{52}\\
y_{43}&0\\
\end{smallmatrix}\right)
\left(\begin{smallmatrix}
1&0\\
x_{45}^{-1}&1\\
\end{smallmatrix}\right)=\left(\begin{smallmatrix}
x_{45}^{-1}y_{52}&y_{52}\\
y_{43}+x_{45}^{-1}y_{52}x_{23}&y_{52}x_{23}\\
\end{smallmatrix}\right),
$$
$$
A_{23}=D_{23},\quad A_{45}=D_{45}.
$$
From the other hand,  by (\ref{h(x,t)-m}) we get
$h(x,t)=\left(\begin{smallmatrix}
1&H(x,\,t) \\
0&1\\
\end{smallmatrix}\right)$,
where
\begin{equation}
\label{H(x,t),3} H(x,t)\!=\!x^{(3)}(t\!-\!I)x_3^{-1}\!=\!\left(\begin{smallmatrix}
1&x_{23} \\
0&1\\
\end{smallmatrix}\right)
\left(\begin{smallmatrix}
t_{24}&t_{25}\\
t_{34}&t_{35}\\
\end{smallmatrix}\right)
\left(\begin{smallmatrix}
1&x_{45}^{-1} \\
0&1\\
\end{smallmatrix}\right)\!=\!
\left(\begin{smallmatrix}
t_{24}+x_{23}t_{34}&(t_{24}+x_{23}t_{34})x_{45}^{-1}+t_{25}+x_{23}t_{35}\\
t_{34}&t_{34}x_{45}^{-1}+t_{25}+t_{35}\\
\end{smallmatrix}\right).
\end{equation}
Therefore,
$$
\langle y,(h(x,t)-I)\rangle=h(x,t)_{34}y_{43}+h(x,t)_{25}y_{52}=
t_{34}y_{43}+[(t_{24}+x_{23}t_{34})x_{45}^{-1}+t_{25}+x_{23}t_{35}]y_{52},
$$
hence
$$
{\mathbb S}_2(t):=
\left(\begin{smallmatrix}
S_{24}(t_{24})&S_{25}(t_{25})\\
S_{34}(t_{34})&S_{35}(t_{35})\\
\end{smallmatrix}\right)=
\left(\begin{smallmatrix}
t_{24}x_{45}^{-1}y_{52}&t_{25}y_{52}\\
t_{34}y_{43}+x_{23}t_{34}x_{45}^{-1}y_{52}&x_{23}t_{35}y_{52}\\
\end{smallmatrix}\right),
$$
\begin{equation}
\label{S_2} {\mathbb S}_2:={\mathbb
S}_2(1)=\left(\begin{smallmatrix}
S_{24}&S_{25}\\
S_{34}&S_{35}\\
\end{smallmatrix}\right)=
\left(\begin{smallmatrix}
x_{45}^{-1}y_{52}&y_{52}\\
y_{43}+x_{45}^{-1}y_{52}x_{23}&y_{52}x_{23}\\
\end{smallmatrix}\right)
=
\left(\begin{smallmatrix}
1&0\\
x_{23}&1\\
\end{smallmatrix}\right)
\left(\begin{smallmatrix}
0&y_{52}\\
y_{43}&0\\
\end{smallmatrix}\right)
\left(\begin{smallmatrix}
1&0\\
x_{45}^{-1}&1\\
\end{smallmatrix}\right).
\end{equation}
\begin{ex}
\label{B_6-orb}  Let $G=B(6,{\mathbb R}),\,\, {\mathfrak
g}=n_+(6,{\mathbb R}),\,\, {\mathfrak g}^*= n_-(6,{\mathbb R}).$
We write the representations for generic orbit corresponding to
the point $y=y_{43}E_{43}+y_{52}E_{52}+y_{61}E_{61}\in {\mathfrak
g}^*$. Set
\end{ex}
$$
G_6=\left\{\left(\begin{smallmatrix}
1&x_{12}&x_{13}&x_{14}&x_{15}&x_{16}\\
0&1&x_{23}&x_{24}&x_{25}&x_{26}\\
0&0&1&x_{34}&x_{35}&x_{36}\\
0&0&0&1&x_{45}&x_{46}\\
0&0&0&0&1&x_{56}\\
0&0&0&0&0&1\\
\end{smallmatrix}\right)\right\},\,\,\,
H_3=\left\{\left(\begin{smallmatrix}
1&0&0&t_{14}&t_{15}&t_{16}\\
0&1&0&t_{24}&t_{25}&t_{26}\\
0&0&1&t_{34}&t_{35}&t_{36}\\
0&0&0&1&0&0\\
0&0&0&0&1&0\\
0&0&0&0&0&1\\
\end{smallmatrix}\right)\right\}
,\,\, y=\left(\begin{smallmatrix}
0&0&0&0&0&0\\
0&0&0&0&0&0\\
0&0&0&0&0&0\\
0&0&y_{43}&0&0&0\\
0&y_{52}&0&0&0&0\\
y_{61}&0&0&0&0&0\\
\end{smallmatrix}\right),
$$
${\mathfrak h}_3=\{t-I\mid t\in H_3\}$. The corresponding
representations $S$ of the subgroup $H_3$ is:
$$
H_3 \ni \exp(t-I)=t\mapsto \exp(2\pi i\langle y,(t-I)\rangle)=\exp(2\pi
i[t_{34}y_{43}+t_{25}y_{52}+t_{16}y_{61}])\in S^1.
$$
For the group $B(6,{\mathbb R})$ holds the following decomposition
(see Remark \ref{9.B(n,R)=LAU})
\begin{equation}
\label{B(6,R)=decom} B(6,{\mathbb
R})=B_{3}B(3)B^{(3)}\quad\text{i.e.}\quad x=x_{3}x(3)x^{(3)},
\end{equation}
where
$$
 x^{(3)} =\left(\begin{smallmatrix}
1&x_{12}&x_{13}&0&0&0\\
0&1&x_{23}&0&0&0\\
0&0&1&0&0&0\\
0&0&0&1&0&0\\
0&0&0&0&1&0\\
0&0&0&0&0&1\\
\end{smallmatrix}\right)
,\,\, x(3)=\left(\begin{smallmatrix}
1&0&0&x_{14}&x_{15}&x_{16}\\
0&1&0&x_{24}&x_{25}&x_{26}\\
0&0&1&x_{34}&x_{35}&x_{36}\\
0&0&0&1&0&0\\
0&0&0&0&1&0\\
0&0&0&0&0&1\\
\end{smallmatrix}\right)
,\,\, x_3=\left(\begin{smallmatrix}
1&0&0&0&0&0\\
0&1&0&0&0&0\\
0&0&1&0&0&0\\
0&0&0&1&x_{45}&x_{46}\\
0&0&0&0&1&x_{56}\\
0&0&0&0&0&1\\
\end{smallmatrix}\right).
$$
We get by (\ref{B(x,y)}) and (\ref{S=B^T})
$$
B(x,y)=
\left(\begin{smallmatrix}
1&x_{45}^{-1}&x_{46}^{-1}\\
0&1&x_{56}^{-1}\\
0&0&1\\
\end{smallmatrix}\right)
\left(\begin{smallmatrix}
0&0&y_{43}\\
0&y_{52}&0\\
y_{61}&0&0\\
\end{smallmatrix}\right)
\left(\begin{smallmatrix}
1&x_{12}&x_{13}\\
0&1&x_{23}\\
0&0&1\\
\end{smallmatrix}\right)
$$
$$
=\left(\begin{smallmatrix}
x_{46}^{-1}y_{61}&x_{45}^{-1}y_{52}+x_{46}^{-1}y_{61}x_{12}&y_{43}+
x_{45}^{-1}y_{52}x_{23}+x_{46}^{-1}y_{61}x_{13}\\
x_{56}^{-1}y_{61}&y_{52}+x_{56}^{-1}y_{61}x_{12}&y_{52}x_{23}+x_{56}^{-1}y_{61}x_{13}\\
y_{61}&y_{61}x_{12}&y_{61}x_{13}\\
\end{smallmatrix}\right),
$$
hence
$$
{\mathbb S}=B^T(x,y)=\left(\begin{smallmatrix}
1&0&0\\
x_{12}&1&0\\
x_{13}&x_{23}&1\\
\end{smallmatrix}\right)
\left(\begin{smallmatrix}
0&0&y_{61}\\
0&y_{52}&0\\
y_{43}&0&0\\
\end{smallmatrix}\right)
\left(\begin{smallmatrix}
1&0&0\\
x_{45}^{-1}&1&0\\
x_{46}^{-1}&x_{56}^{-1}&1\\
\end{smallmatrix}\right)
$$
$$
=\left(\begin{smallmatrix}
x_{46}^{-1}y_{61}&x_{56}^{-1}y_{61}&y_{61}\\
x_{45}^{-1}y_{52}+x_{46}^{-1}y_{61}x_{12}&y_{52}+x_{56}^{-1}y_{61}x_{12}&y_{61}x_{12}\\
y_{43}+x_{45}^{-1}y_{52}x_{23}+x_{46}^{-1}y_{61}x_{13}&
y_{52}x_{23}+x_{56}^{-1}y_{61}x_{13}&y_{61}x_{13}\\
\end{smallmatrix}\right).
$$
Using again (\ref{def:S=(S_kr)}), (\ref{Ind(B_n)}) and
Remark~\ref{h(x,t)=e} we get the following expressions for the
generators $A_{kn}=\frac{d}{dt}T_{I+tE_{kn}}\vert_{t=0}$ of
one-parameter subgroups $I+tE_{kn},\,\,t\in{\mathbb R}$:
\begin{eqnarray}
\label{9.A_kn-B-6}
A_{12}=D_{12},\,\,A_{13}=D_{13},\,\,A_{23}=x_{12}D_{13}+D_{23}, \\
A_{45}=D_{45},\,\,A_{46}=D_{46},\,\,A_{56}=x_{45}D_{46}+D_{56},
\end{eqnarray}
\begin{equation}
\label{9.A_kn-B-6(3)}
{\mathbb S}=\frac{1}{2\pi i}\left(\begin{smallmatrix}
A_{14}&A_{15}&A_{16}\\
A_{24}&A_{25}&A_{26}\\
A_{34}&A_{35}&A_{36}\\
\end{smallmatrix}\right)=
\left(\begin{smallmatrix}\
x_{46}^{-1}y_{61}&x_{56}^{-1}y_{61}&y_{61}\\
x_{45}^{-1}y_{52}+x_{46}^{-1}y_{61}x_{12}&y_{52}+x_{56}^{-1}y_{61}x_{12}&y_{61}x_{12}\\
y_{43}+x_{45}^{-1}y_{52}x_{23}+x_{46}^{-1}y_{61}x_{13}&
y_{52}x_{23}+x_{56}^{-1}y_{61}x_{13}&y_{61}x_{13}\\
\end{smallmatrix}\right).
\end{equation}
We {\it recall the expressions} for  $B(x,y)$  and hence for ${\mathbb S}=B(x,y)^T$ {\it for small}
$n$. For $n=4$ we have
$$
B(x,y)=x_m^{-1}yx^{(m)}=
\left(\begin{smallmatrix}
1&x_{45}^{-1}\\
0&1\\
\end{smallmatrix}\right)
\left(\begin{smallmatrix}
0&y_{43}\\
y_{52}&0\\
\end{smallmatrix}\right)
\left(\begin{smallmatrix}
1&x_{23}\\
0&1\\
\end{smallmatrix}\right)=
\left(\begin{smallmatrix}
x_{45}^{-1}y_{52}&y_{43}+x_{45}^{-1}y_{52}x_{23}\\
y_{52}&y_{52}x_{23}\\
\end{smallmatrix}\right),
$$
$$
{\mathbb S}=\left(\begin{smallmatrix}
1&0\\
x_{23}&1\\
\end{smallmatrix}\right)
\left(\begin{smallmatrix}
0&y_{52}\\
y_{43}&0\\
\end{smallmatrix}\right)
\left(\begin{smallmatrix}
1&0\\
x_{45}^{-1}&1\\
\end{smallmatrix}\right)=\left(\begin{smallmatrix}
x_{45}^{-1}y_{52}&y_{52}\\
y_{43}+x_{45}^{-1}y_{52}x_{23}&y_{52}x_{23}\\
\end{smallmatrix}\right).
$$
For $G^3_2\simeq B(6,{\mathbb R})$ (see (\ref{G^m_n=}) for the notation $G^m_n$) holds:
$$
B(x,y)=
\left(\begin{smallmatrix}
1&x_{45}^{-1}&x_{46}^{-1}\\
0&1&x_{56}^{-1}\\
0&0&1\\
\end{smallmatrix}\right)
\left(\begin{smallmatrix}
0&0&y_{43}\\
0&y_{52}&0\\
y_{61}&0&0\\
\end{smallmatrix}\right)
\left(\begin{smallmatrix}
1&x_{12}&x_{13}\\
0&1&x_{23}\\
0&0&1\\
\end{smallmatrix}\right)
$$
$$
=\left(\begin{smallmatrix}
x_{46}^{-1}y_{61}&x_{45}^{-1}y_{52}+x_{46}^{-1}y_{61}x_{12}&y_{43}+x_{45}^{-1}y_{52}x_{23}+
x_{46}^{-1}y_{61}x_{13}\\
x_{56}^{-1}y_{61}&y_{52}+x_{56}^{-1}y_{61}x_{12}&y_{52}x_{23}+x_{56}^{-1}y_{61}x_{13}\\
y_{61}&y_{61}x_{12}&y_{61}x_{13}\\
\end{smallmatrix}\right)
$$
hence
$$
{\mathbb S}=\left(\begin{smallmatrix}
x_{46}^{-1}y_{61}&x_{56}^{-1}y_{61}&y_{61}\\
x_{45}^{-1}y_{52}+x_{46}^{-1}y_{61}x_{12}&y_{52}+x_{56}^{-1}y_{61}x_{12}&y_{61}x_{12}\\
y_{43}+x_{45}^{-1}y_{52}x_{23}+x_{46}^{-1}y_{61}x_{13}&
y_{52}x_{23}+x_{56}^{-1}y_{61}x_{13}&y_{61}x_{13}\\
\end{smallmatrix}\right)
$$
$$
=\left(\begin{smallmatrix}
1&0&0\\
x_{12}&1&0\\
x_{13}&x_{23}&1\\
\end{smallmatrix}\right)
\left(\begin{smallmatrix}
0&0&y_{61}\\
0&y_{52}&0\\
y_{43}&0&0\\
\end{smallmatrix}\right)
\left(\begin{smallmatrix}
1&0&0\\
x_{45}^{-1}&1&0\\
x_{46}^{-1}&x_{56}^{-1}&1\\
\end{smallmatrix}\right).
$$
For $G^3_3\simeq B(8,{\mathbb R})$  holds:
$$
\left(\begin{smallmatrix}
1&x_{01}&x_{02}&x_{03}&t_{04}&t_{05}&t_{06}&t_{07}\\
0&1     &x_{12}&x_{13}&t_{14}&t_{15}&t_{16}&t_{17}\\
0&0&1&x_{23}  &t_{24}&t_{25}&t_{26}&t_{27}\\
0&0&0&1&t_{34}&t_{35}&t_{36}&t_{37}\\
0&0&0&0  &1&x_{45}^{-1}&x_{46}^{-1}&x_{47}^{-1}\\
0&0&0  &0&0&1&x_{56}^{-1}&x_{57}^{-1}\\
0&0&0  &0&0&0&1&x_{67}^{-1}\\
0&0  &0&0&0&0&0&1\\
\end{smallmatrix}\right),\quad
y=\left(\begin{smallmatrix}
0&0&0&0&0&0&0&0\\
0&0&0&0&0&0&0&0\\
0&0&0&0&0&0&0&0\\
0&0&0&0&0&0&0&0\\
0&0&0&y_{43}  &0&0&0&0\\
0&0&y_{52}  &0&0&0&0&0\\
0&y_{61}&0  &0&0&0&0&0\\
y_{70}&0  &0&0&0&0&0&0\\
\end{smallmatrix}\right).
$$
As before we have
$$
B(x,y)=
\left(\begin{smallmatrix}
1&x_{45}^{-1}&x_{46}^{-1}&x_{47}^{-1}\\
0&1&x_{56}^{-1}&x_{57}^{-1}\\
0&0&1&x_{67}^{-1}\\
0&0&0&1\\
\end{smallmatrix}\right)
\left(\begin{smallmatrix}
0&0&0&y_{43}  \\
0&0&y_{52}  &0\\
0&y_{61}&0  &0\\
y_{70}&0  &0&0\\
\end{smallmatrix}\right)
\left(\begin{smallmatrix}
1&x_{01}&x_{02}&x_{03}\\
0&1     &x_{12}&x_{13}\\
0&0&1&x_{23}  \\
0&0&0&1\\
\end{smallmatrix}\right),
$$
$$
{\mathbb S}=(x^{(m)})^Ty^T(x_m^{-1})^T=\left(\begin{smallmatrix}
1     &0     &0     &0\\
x_{01}&1     &0     &0\\
x_{02}&x_{12}&1     &0\\
x_{03}&x_{13}&x_{23}&1\\
\end{smallmatrix}\right)
\left(\begin{smallmatrix}
0     &0&0&y_{70}\\
0     &0&y_{61}     &0\\
0     &y_{52}&&0\\
y_{43}&0     &0     &0     &\\
\end{smallmatrix}\right)
\left(\begin{smallmatrix}
1            &0          &0&0\\
x_{45}^{-1}  &1          &0&0\\
x_{46}^{-1}  &x_{56}^{-1}&1&0\\
x_{47}^{-1}  &x_{57}^{-1}&x_{67}^{-1}&1\\
\end{smallmatrix}\right).
$$

\subsection{New proof of the irreducibility of the induced representations corresponding to
 a generic orbits}
 \label{new-irred-loc}
\begin{rem}
\label{Kir-irr-max-dim}
By Kirillov's Theorem~\ref{9.t.Kir}  the induced representation $T_{f,H}={\rm Ind}^G_H U_{f,H}$ is
irreducible if and only if the Lie algebra ${\mathfrak h}$ of the
group $H$ is a subalgebra of ${\mathfrak g}$ subordinate to the
functional $f$ with {\it maximal possible dimension}.

The condition of ``maximal possible dimension'' is difficult to
extend for the infinite-dimensional case. That is why in this
section we  give another proof of the irreducibility of the induced
representation of a nilpotent group $B(n,{\mathbb R})$ that will be
extended in Section~\ref{gen-orb,Z,irr} for the infinite-dimensional
analog $B_0^{\mathbb Z}$ of the group $B(n,{\mathbb R})$.
\end{rem}
Let us consider a sequence of a Lie groups $G^m_n$ and its Lie algebras
${\mathfrak g}^m_n,\,\,m\in {\mathbb Z},\,\,n\in{\mathbb N}$ defined as follows
\begin{equation}
\label{G^m_n=}
G^m_n=\{I+\sum_{m-n\leq k<n\leq m+n+1}x_{kn}E_{kn}\},\quad
{\mathfrak g}^m_n=\{\sum_{m-n\leq k<n\leq m+n+1}x_{kn}E_{kn}\}.
\end{equation}
We note that for any $m\in{\mathbb N}$ holds $B_0^{\mathbb Z}=\varinjlim_{n}G^m_n$. We have the
decomposition (see (\ref{9.B(n,R)=LAU}))
\begin{equation*}
\label{G^m_n-decom}
G^m_n=B_{m,n}B(m,n)B^{(m,n)},
\end{equation*}
where
\begin{eqnarray*}
B_{m,n}=\{I+\sum_{(k,r)\in\Delta_{m,n}}x_{kr}E_{kr}\},\,\,
B(m,n)=\{I+\sum_{(k,r)\in\Delta(m,n)}x_{kr}E_{kr}\},\\
B^{(m,n)}=\{I+\sum_{(k,r)\in\Delta^{(m,n)}}x_{kr}E_{kr}\},\,\,
\end{eqnarray*}
and
$$
\Delta(m,n)=\{(k,r)\in{\mathbb Z}^2\mid m-n\leq k\leq m<r\leq m+n+1\},\,\,
$$
$$
\Delta_{m,n}=\{(k,r)\in{\mathbb Z}^2\mid m+1\leq k<r\leq m+n+1\},\,\,
$$
$$
\Delta^{(m,n)}=\{(k,r)\in{\mathbb Z}^2\mid m-n\leq k<r\leq m\}.
$$
The corresponding elements of the group $G^{m}_n$ are as follows
$$
\left(
\begin{smallmatrix}
1&x_{m-n,m-n+1}&\dots&x_{m-n,m-1}&x_{m-n,m}&  t_{m-n,m+1}  &t_{m-n,m+2}  &\dots&t_{m-n,m+n+1}\\
0&1            &\dots&x_{m-n+1,m-1}&x_{m-n+1,m}&t_{m-n+1,m+1}&t_{m-n+1,m+2}&\dots&t_{m-n+1,m+n+1}\\
&              &\dots&&&&                                                  &\dots&\\
0&0&\dots&1&  x_{m-1,m}  &t_{m-1,m+1}  &t_{m-1,m+2}&\dots&t_{m-1,m+n+1}\\
0&0&\dots&0&  1          &t_{m,m+1}    &t_{m,m+2}  &\dots&t_{m,m+n+1}\\
0&0&\dots&0&0&1          &x_{m+1,m+2}  &\dots&x_{m+1,m+n+1}\\
0&0&\dots&0&0&0          &1            &\dots&x_{m+2,m+n+1}\\
&  &\dots&&&&                          &\dots&\\
0&0&\dots&0& 0 &0  &0                  &\dots&x_{m+n,m+n+1}\\
0&0&\dots&0& 0 &0  &0                  &\dots&1            \\
\end{smallmatrix}
\right).
$$
The induced representation of the group $G^m_n$ is defined in the space $L^2(X,d\mu)$ by the following formula
\begin{equation}\label{Ind(G^m_n)}
(T^{m,y_n}_tf)(x)=S(h(x,t))\left(
\frac{d\mu(xt)}{d\mu(x)}
\right)^{1/2}f(xt),\,\,f\in L^2(X,\mu),\,\,\,x\in
X=H\backslash G,\,\,t\in G
\end{equation}
where $X=B(m,n)\backslash G^m_n\cong B_{m,n}\times B^{(m,n)}$ (see (\ref{G^m_n-decom})),
\begin{equation}
\label{Haar-on-G^m_n}
d\mu(x_m,x^{(m)})=dx_m\otimes dx^{(m)}=\otimes_{(k,n)\in\Delta_{m,n}}dx_{kn}\otimes
\otimes_{(k,n)\in\Delta^{(m,n)}}dx_{kn}
\end{equation}
be the Haar measure on the group $B_{m,n}\times B^{(m,n)}$. Denote by
${\mathcal H}^{m,n}=L^2(B_{m,n}\times B^{(m,n)},dx_m\otimes dx^{(m)})$.
\begin{thm}
\label{Ind-fin-irr} The induced representation $T^{m,y_n}$ of the
group $G^m_n$ defined by formula (\ref{Ind(G^m_n)}), corresponding
to generic orbit ${\mathcal O}_{y_n}$, generated by the point  $y_n\in
({\mathfrak g^m_n})^*$,\\ $y_n=\sum_{r=0}^{n-1}
y_{m+r+1,m-r}E_{m+r+1,m-r}$ is irreducible. Moreover the generators
of one-parameter groups
$A_{kr}=\frac{d}{dt}T^{m,y_n}_{I+tE_{kr}}\mid_{t=0}$ are as
follows
$$
A_{kr}=\sum_{s=m-n}^{k-1}x_{ks}D_{rs}+D_{kr},\,\,\,(k,r)\in \Delta^{(m,n)},\quad
A_{kr}=\sum_{s=m+1}^{k-1}x_{ks}D_{rs}+D_{kr},\,\,\, (k,r)\in \Delta_{m,n},
$$
$$
(2\pi i)^{-1}\big(A_{kr}\big)_{(k,r)\in \Delta(m,n)}={\mathbb S}^{(m)}_n=
(S_{kr})_{(k,r)\in \Delta(m,n)}=\big(x_m^{-1}yx^{(m)}\big)^T.
$$
\end{thm}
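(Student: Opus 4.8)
The plan is to deduce the theorem from the finite-dimensional facts recalled in Subsection~\ref{orb-gen-loc}, since $G^m_n$ is (up to relabeling of indices) the group $B(2n+1,{\mathbb R})$ and $y_n$ is exactly the anti-diagonal functional $y$ of Example~\ref{Gen-orbit.n} defining a generic orbit. First I would verify that the manifold $\{\Delta_k=c_k : 1\le k\le n\}$ through $y_n$ is a $G^m_n$-orbit and that the subgroup $H=B(m,n)$ from the decomposition $G^m_n=B_{m,n}B(m,n)B^{(m,n)}$ has Lie algebra ${\mathfrak h}$ which is subordinate to $y_n$ of maximal possible dimension $[\,(2n+1)/2\,]\cdot[\,(2n+2)/2\,]=n(n+1)$; this is precisely the dimension count given in Example~\ref{Gen-orbit.n}. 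Then irreducibility follows immediately from part (b) of Kirillov's Theorem~\ref{9.t.Kir}. The subordination of ${\mathfrak h}$ to $y_n$ is a one-line computation: ${\mathfrak h}$ consists of matrices supported on $\Delta(m,n)$, so $[{\mathfrak h},{\mathfrak h}]$ is supported on pairs $(k,r)$ with $k\le m<r$ and $r\ge m+2$ or $k\le m-1$, none of which meet the support $\{(m+r+1,m-r):0\le r\le n-1\}$ of $y_n$ under the pairing; hence $\langle y_n,[{\mathfrak h},{\mathfrak h}]\rangle=0$.

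Next I would establish the explicit formulas for the generators. By Remark~\ref{h(x,t)=e}, for $t\in B_{m,n}B^{(m,n)}$ we have $h(x,t)=e$, so on these one-parameter subgroups $T^{m,y_n}_t$ acts purely by translation on $X\cong B_{m,n}\times B^{(m,n)}$ with the Haar measure~(\ref{Haar-on-G^m_n}); differentiating the translation action $f(x)\mapsto f(xE_{kr}(t))$ and using the explicit multiplication rule in $B_{m,n}$ (resp.\ $B^{(m,n)}$), exactly as in the worked example $A_{23}=x_{12}D_{13}+D_{23}$, yields $A_{kr}=\sum_{s}x_{ks}D_{rs}+D_{kr}$ with the index range dictated by which entries of $x$ lie to the left of column $k$ in the relevant block. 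For $(k,r)\in\Delta(m,n)$, Remark~\ref{h(x,t)=e} gives $h(x,t)-I=x^{(m)}(t-I)x_m^{-1}$, so $S(h(x,t))=\exp(2\pi i\,{\rm tr}((t-I)B(x,y_n)))$ with $B(x,y_n)=x_m^{-1}y_nx^{(m)}$ by~(\ref{B(x,y)}); since the measure ratio is $1$ here too (translations by $B(m,n)$ preserve the product Haar measure on $B_{m,n}\times B^{(m,n)}$), differentiating along $E_{kr}(t)$ and applying Lemma~\ref{l.tr(E_knB)=B^t} gives $(2\pi i)^{-1}A_{kr}=S_{kr}=(B^T(x,y_n))_{kr}$, i.e.\ the matrix identity ${\mathbb S}^{(m)}_n=(x_m^{-1}y_nx^{(m)})^T$ claimed in the theorem. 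This is all entirely parallel to the computations already carried out for $B(4,{\mathbb R})$, $B(5,{\mathbb R})$, $B(6,{\mathbb R})$, only with general block size $n$.

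The one genuinely nontrivial point — the main obstacle — is confirming that ${\mathfrak h}$ really has \emph{maximal} dimension among subalgebras subordinate to $y_n$, not merely that it is subordinate. I would handle this by the standard orbit-method bound: any subordinate subalgebra has dimension at most $\dim{\mathfrak g}-\tfrac12\dim\Omega_{y_n}$, and since $\dim\Omega_{y_n}=\dim{\mathfrak g}-n$ for the generic orbit (codimension $n=[\,(2n+1)/2\,]$, as recalled from~\cite{Kir94}, Example 7.9), the maximal dimension is $\tfrac12(\dim{\mathfrak g}+n)=\tfrac12\big(\tfrac{(2n+1)2n}{2}+n\big)=n(n+1)$, which matches $\dim{\mathfrak h}=|\Delta(m,n)|=n(n+1)$. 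Alternatively, one can bypass the dimension count and invoke the polarization criterion directly: ${\mathfrak h}$ is a polarization at $y_n$ because it is an isotropic subalgebra for $B_{y_n}$ whose radical contains the kernel of $B_{y_n}$ restricted appropriately, and in a nilpotent Lie algebra any such subalgebra with $\dim{\mathfrak h}=\tfrac12(\dim{\mathfrak g}+\dim{\mathfrak g}^{y_n})$ is automatically a polarization, making $T^{m,y_n}$ irreducible by Theorem~\ref{9.t.Kir}(b). Everything else reduces to the bookkeeping already displayed in the preceding subsections.
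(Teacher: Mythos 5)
Your proposal is correct in outline, but it takes the route that the paper deliberately sets aside, and it omits the argument that is the actual content of Subsection~\ref{new-irred-loc}. You prove irreducibility by checking that the Lie algebra of $B(m,n)$ is subordinate to $y_n$ of maximal possible dimension and then citing Theorem~\ref{9.t.Kir}(b). The paper disposes of that route in a single sentence (``the irreducibility follows from the Kirillov results'', cf.\ Remark~\ref{Kir-irr-max-dim}) precisely because the maximal-dimension criterion has no counterpart for $B_0^{\mathbb Z}$, and instead argues operator-algebraically: the restriction of $T^{m,y_n}$ to the commutative subgroup $B(m,n)$ acts by the multiplication operators $\exp(2\pi i S_{kr}(t))$, and Lemma~\ref{W(S_kn)=W(x_kn)m-n} shows --- by reading ${\mathbb S}J=(x^{(m)})^T(y^TJ)\bigl(J(x_m^{-1})^TJ\bigr)$ as a Gauss decomposition $LDU$ and inverting it via Theorem~\ref{t.C=UDL}, so that the coordinates $x_{kr}$ become rational functions of the $S_{kr}$ --- that the von Neumann algebra generated by these operators is the maximal abelian algebra $L^\infty(B_{m,n}\times B^{(m,n)},dx_m\otimes dx^{(m)})$. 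Any bounded operator commuting with $T^{m,y_n}$ is then multiplication by an $L^\infty$ function invariant under right translation, hence constant by ergodicity of the Haar measure. Your version is shorter in the finite-dimensional setting, but the paper's version is the one that transfers essentially verbatim to Theorem~\ref{Ind-infin-irr} in Subsection~\ref{gen-orb,Z,irr}, where your dimension count is meaningless; a reader of this subsection needs the von Neumann algebra proof, not a second derivation of Kirillov's criterion. Your derivation of the generators coincides with the paper's. One bookkeeping correction: the indices of $G^m_n$ run from $m-n$ to $m+n+1$, so $G^m_n\simeq B(2n+2,{\mathbb R})$ rather than $B(2n+1,{\mathbb R})$; hence the generic orbit has codimension $n+1$ and $|\Delta(m,n)|=(n+1)^2=\left[\tfrac{2n+2}{2}\right]\cdot\left[\tfrac{2n+3}{2}\right]$, and your maximality check must be redone with these numbers (compare $G^3_2\simeq B(6,{\mathbb R})$, where $y$ has three anti-diagonal entries and $\Delta(3,2)$ has nine elements; the $n$ nonzero entries of $y_n$ as printed in the statement reflect an off-by-one there, not in your favor).
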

{\it The irreducibility} of the induced representation of the group $G^m_n$  is based on the following lemma.
\begin{lem}
\label{W(S_kn)=W(x_kn)m-n}
Two von Neumann algebra ${\mathfrak A}^S$ and ${\mathfrak A}^x$  in the space ${\mathcal H}^{m,n}$ generated respectively
by the sets of unitary operators $U_{kr}(t)$ and $V_{kr}(t)$ coincides,
where
\begin{equation}
\label{U_kr(t),V_{kr}(t)-(m,n)}
(U_{kr}(t)f)(x)=\exp(2\pi i S_{kr}(t))f(x),\quad (V_{kr}(t)f)(x):=\exp(2\pi i tx_{kr})f(x),
\end{equation}
$$
{\mathfrak A}^S=\big(U_{kr}(t)=T^{m,y_n}_{I+tE_{kr}}=\exp(2\pi i S_{kr}(t))\mid  t\in {\mathbb R},\,\,(k,r)\in \Delta(m,n)\big)'',
$$
$$
{\mathfrak A}^x=\big(V_{kr}(t):=\exp(2\pi i tx_{kr})\mid t\in
{\mathbb R},\,\, (k,r)\in
\Delta_{m,n}\bigcup\Delta^{(m,n)}\big)''.
$$
\end{lem}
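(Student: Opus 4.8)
\emph{Proof plan.} Both $\mathfrak{A}^S$ and $\mathfrak{A}^x$ are algebras of multiplication operators on $\mathcal{H}^{m,n}$, so the assertion is really a statement about the two sub-$\sigma$-algebras of the Borel structure of $B_{m,n}\times B^{(m,n)}$ generated by the function families $\{S_{kr}\mid(k,r)\in\Delta(m,n)\}$ and $\{x_{kr}\mid(k,r)\in\Delta_{m,n}\cup\Delta^{(m,n)}\}$; the plan is to identify both algebras in these terms and then prove the two inclusions. First I would record that, by Remark~\ref{h(x,t)=e} together with (\ref{H(x,t)}), one has $h(x,E_{kr}(t))-I=t\,x^{(m)}E_{kr}x_m^{-1}$ for $(k,r)\in\Delta(m,n)$, so that $S_{kr}(t)=tS_{kr}(1)=tS_{kr}$ is linear in $t$ (this is already implicit in (\ref{S_kr(t)})); hence $t\mapsto U_{kr}(t)$ is the one-parameter unitary group of multiplication by $\exp(2\pi i tS_{kr})$ and, as $t$ ranges over $\mathbb{R}$, generates multiplication by all bounded Borel functions of $S_{kr}$. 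Consequently $\mathfrak{A}^S$ is multiplication by all bounded Borel functions of the tuple $(S_{kr})_{(k,r)\in\Delta(m,n)}$, while $\mathfrak{A}^x$, for the same reason, is multiplication by all bounded Borel functions of the tuple $(x_{kr})_{(k,r)\in\Delta_{m,n}\cup\Delta^{(m,n)}}$; since the latter coordinates are exactly the global coordinates on $B_{m,n}\times B^{(m,n)}$, the algebra $\mathfrak{A}^x$ is the whole maximal abelian algebra $L^\infty(B_{m,n}\times B^{(m,n)})$ of multiplication operators.

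The inclusion $\mathfrak{A}^S\subseteq\mathfrak{A}^x$ is then immediate: by (\ref{B(x,y)})--(\ref{S=B^T}) each $S_{kr}$ is a matrix entry of $\mathbb{S}^{(m)}_n=B(x,y)^T=(x^{(m)})^Ty^T(x_m^{-1})^T$, hence a polynomial in the coordinates $x_{k'r'}$ and in the entries of $x_m^{-1}$; the latter, by Remark~\ref{x^{-1}:=}, are again polynomials in the coordinates. Thus $S_{kr}\in\mathfrak{A}^x$, so $U_{kr}(t)\in\mathfrak{A}^x$ for every $t$, and $\mathfrak{A}^S\subseteq\mathfrak{A}^x$.

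The substance of the lemma is the reverse inclusion $\mathfrak{A}^x\subseteq\mathfrak{A}^S$, i.e. that every coordinate $x_{kr}$ is a Borel function of the tuple $(S_{k'r'})_{(k',r')\in\Delta(m,n)}$; I would in fact prove that each $x_{kr}$ is a \emph{polynomial} in the $S_{k'r'}$ whose coefficients are rational in the fixed nonzero numbers $y_{m+r+1,m-r}$, $0\le r\le n-1$. The idea is to invert the relation $\mathbb{S}^{(m)}_n=(x^{(m)})^Ty^T(x_m^{-1})^T$ exploiting the anti-diagonal shape of $y$: because its nonzero entries form an invertible backward-diagonal block, $\mathbb{S}^{(m)}_n$ carries a Gauss-type factorization (cf. Subsection~\ref{Gauss-dec}) and the unitriangular factors $(x^{(m)})^T$, $(x_m^{-1})^T$ can be recovered from it. Concretely, I would read the coordinates off successively, starting from the extremal rows and columns of $\mathbb{S}^{(m)}_n$: the entries lying on the ``outer frame'' of the matrix are, one by one, of the form (a product of the $y_{m+r+1,m-r}$'s) $\times$ (a single coordinate), so dividing by that nonzero constant recovers the coordinate; subtracting the now-known contributions leaves a problem of the same shape but of strictly smaller size, and an induction on $n$ finishes it. This is precisely the elimination carried out for small $n$ in Examples~\ref{B_4-orb}, \ref{B_5-orb}, \ref{B_6-orb}, where $x_{45}$, $x_{12}$, $x_{13}$, $x_{23},\dots$ are each obtained from the $S_{kr}$ by dividing only by $y_{51}$, $y_{42}$ and products of such. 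Once $x_{kr}=P_{kr}\bigl((S_{k'r'})_{(k',r')\in\Delta(m,n)}\bigr)$ with $P_{kr}$ a polynomial, $V_{kr}(t)=\exp(2\pi i tx_{kr})$ is a bounded Borel function of the tuple $(S_{k'r'})$ and hence lies in $\mathfrak{A}^S$; combined with the previous step this gives $\mathfrak{A}^x=\mathfrak{A}^S$.

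The hard part is this last paragraph: turning the ``peel the anti-diagonal, solve from the corner'' recipe into a clean induction on $n$ (and $m$) with the bookkeeping that guarantees every division is by a fixed nonzero product of the constants $y_{m+r+1,m-r}$ — never by a non-constant function — so that the resulting $x_{kr}$ really are polynomials in the $S_{k'r'}$. The rest (identifying the two algebras with multiplication by Borel functions of the respective tuples, using Lemma~\ref{l.tr(E_knB)=B^t}, (\ref{S=B^T}) and the linearity of $S_{kr}(t)$, together with the trivial inclusion $\mathfrak{A}^S\subseteq\mathfrak{A}^x$) is routine.
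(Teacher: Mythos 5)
Your proposal is correct and follows essentially the same route as the paper: both directions rest on the factorization ${\mathbb S}^{(m)}_n=(x^{(m)})^T y^T (x_m^{-1})^T$, the easy inclusion ${\mathfrak A}^S\subseteq{\mathfrak A}^x$ on the entries of ${\mathbb S}$ being polynomials in the coordinates, and the reverse inclusion on recovering the coordinates of $x^{(m)}$ and $x_m^{-1}$ (hence $x_m$) from the $S_{kr}$. The only real difference is packaging: the paper carries out your ``peel the outer frame'' elimination by multiplying by the anti-diagonal permutation $J$ and invoking the Gauss decomposition ${\mathbb S}J=LDU$ of Theorem~\ref{t.C=UDL}, whose explicit minor formulas automatically settle the bookkeeping point you flag as the hard part — every division is by a fixed nonzero constant, since the principal minors of ${\mathbb S}J$ equal the products $d_1\cdots d_k$ of the nonzero anti-diagonal entries of $y$.
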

\begin{proof}
Using the decomposition (see  (\ref{B(x,y)}) and (\ref{S=B^T}))
\begin{equation}
\label{S^m_n=}
{\mathbb S}_n^{(m)}=(x_m^{-1}yx^{(m)})^T=(x^{(m)})^Ty^T(x_m^{-1})^T
\end{equation}
we conclude that ${\mathfrak A}^S\subseteq {\mathfrak A}^x$.
Indeed, we get $V_{kr}(t):=\exp(2\pi i tx_{kr})\in {\mathfrak A}^x$ hence the operators $x_{kr}$ of multiplication by the independent
variable $f(x)\mapsto x_{kr}f(x)$ in the space ${\mathcal H}^{m,n}$ are affiliated with the von Neumann algebra ${\mathfrak A}^x$ i.e.
$x_{kr}\,\,\eta\,\,{\mathfrak A}^x$ for $(k,r)\in \Delta_{m,n}\bigcup\Delta^{(m,n)}$.
\begin{df}
Recall {\rm(}c.f. e.g. \cite{Dix69W}{\rm)}
that a non  necessarily bounded self-adjoint operator $A$ in a Hilbert space
$H$ is said to be {\it affiliated} with a von Neumann algebra $M$
of operators in this Hilbert space $H$, if $\exp(itA)\in M$ for
all $t\in{\mathbb R}$. One then writes $A\,\,\eta\,\,M$.
\end{df}
By (\ref{x^{-1}_{kn}:=}) the matrix elements
$x_{kr}^{-1}$ of the matrix $x_m^{-1}\in B_{m,n}$ are also affiliated $x_{kr}^{-1}\,\,\eta\,\,{\mathfrak A}^x$. Using (\ref{S^m_n=})
we conclude that the matrix elements $S_{kr},\,\,\in \Delta(m,n)$ of the matrix ${\mathbb S}_n^{(m)}$ are affiliated:
$S_{kr}\,\,\eta\,\,{\mathfrak A}^x,\,\,(k,r)\in \Delta(m,n)$, so ${\mathfrak A}^S\subseteq {\mathfrak A}^x$.
\index{operator!affiliated}

To prove that ${\mathfrak A}^S\supseteq {\mathfrak A}^x$ we  find
the expressions of the matrix element of the matrix $x^{(m)}\in B^{(m,n)}$ and
$x_m^{-1}\in  B_{m,n}$ in terms of the matrix elements of the matrix ${\mathbb
S}_n^{(m)}=(S_{kr})_{(k,r)\in\Delta(m,n)}$. To do that we connect
the above decomposition
 ${\mathbb S}_n^{(m)}=(x^{(m)})^Ty^T(x_m^{-1})^T$ and the Gaussian
decomposition $C=LDU$ (see Theorem~\ref{t.C=UDL}).
Let us denote by $J$ the $n\times n$ anti-diagonal matrix $J=\sum_{r=0}^{n-1}E_{m-r,m+r+1}$
Using $J^2=I$ and (\ref{S=B^T}) we get
\begin{equation}
\label{SJ=dec2}
{\mathbb S}J= B^T(x,y)J=(x^{(m)})^Ty^T(x_m^{-1})^TJ=(x^{(m)})^T(y^TJ)(J(x_m^{-1})^TJ).
\end{equation}
The latter decomposition (\ref{SJ=dec2}) is in fact the Gauss
decomposition of the matrix ${\mathbb S}J$ i.e. we get
$$
{\mathbb S}J=LDU, \quad\text{where}\quad
L=(x^{(m)})^T,\quad  D=y^TJ,\quad U=J(x_m^{-1})^TJ.
$$
Using the Theorem~\ref{t.C=UDL} we can find the matrix elements of the matrix $x^{(m)}\in B^{(m,n)}$ and $x_m^{-1}\in B_{m,n}$
in terms of the matrix elements of the matrix ${\mathbb S}_n^{(m)}$, hence  we can also find
the matrix elements of the matrix $x_m\in B_{m,n}$. This finish the proof of the lemma.
\end{proof}
We give below the expressions for ${\mathbb S}_nJ$. For $m=3$ and  $n=1$ i.e. for $G^3_1$ we  have
(remind  that $J^2=I$ )
$$
{\mathbb S}_2=
\left(\begin{smallmatrix}
1&0\\
x_{23}&1\\
\end{smallmatrix}\right)
\left(\begin{smallmatrix}
0&y_{52}\\
y_{43}&0\\
\end{smallmatrix}\right)
\left(\begin{smallmatrix}
1&0\\
x_{45}^{-1}&1\\
\end{smallmatrix}\right)=
\left(\begin{smallmatrix}
1&0\\
x_{23}&1\\
\end{smallmatrix}\right)
\left(\begin{smallmatrix}
y_{52}&0\\
0&y_{43}\\
\end{smallmatrix}\right)
\left(\begin{smallmatrix}
x_{45}^{-1}&1\\
1&0\\
\end{smallmatrix}\right),
$$
$$
{\mathbb S}_2J=
\left(\begin{smallmatrix}
1&0\\
x_{23}&1\\
\end{smallmatrix}\right)
\left(\begin{smallmatrix}
y_{52}&0\\
0&y_{43}\\
\end{smallmatrix}\right)
\left(\begin{smallmatrix}
1&x_{45}^{-1}\\
0&1\\
\end{smallmatrix}\right).
$$
For $G^3_2$ we get
$$
{\mathbb S}_3=\left(\begin{smallmatrix}
1&0&0\\
x_{12}&1&0\\
x_{13}&x_{23}&1\\
\end{smallmatrix}\right)
\left(\begin{smallmatrix}
y_{61}&0&0\\
0&y_{52}&0\\
0&0&y_{43}\\
\end{smallmatrix}\right)
\left(\begin{smallmatrix}
x_{46}^{-1}&x_{56}^{-1}&1\\
x_{45}^{-1}&1&0\\
1&0&0\\
\end{smallmatrix}\right),
$$
$$
{\mathbb S}_3J=\left(\begin{smallmatrix}
1&0&0\\
x_{12}&1&0\\
x_{13}&x_{23}&1\\
\end{smallmatrix}\right)
\left(\begin{smallmatrix}
y_{61}&0&0\\
0&y_{52}&0\\
0&0&y_{43}\\
\end{smallmatrix}\right)
\left(\begin{smallmatrix}
1&x_{56}^{-1}&x_{46}^{-1}\\
0&1&x_{45}^{-1}\\
0&0&1\\
\end{smallmatrix}\right).
$$
For $G^3_3$ we have
$$
{\mathbb S}_4=\left(\begin{smallmatrix}
1     &0     &0     &0\\
x_{01}&1     &0     &0\\
x_{02}&x_{12}&1     &0\\
x_{03}&x_{13}&x_{23}&1\\
\end{smallmatrix}\right)
\left(\begin{smallmatrix}
y_{70}&0     &0&0\\
0     &y_{61}&0     &0\\
0     &0     &y_{52}&0\\
0     &0     &0     &y_{43}\\
\end{smallmatrix}\right)
\left(\begin{smallmatrix}
x_{47}^{-1}&x_{57}^{-1}&x_{67}^{-1}&1\\
x_{46}^{-1}&x_{56}^{-1}&1&0\\
x_{45}^{-1}&1          &0&0\\
1          &0          &0&0\\
\end{smallmatrix}\right),
$$
\begin{equation}
\label{S_4J}
{\mathbb S}_4J=\left(\begin{smallmatrix}
1     &0     &0     &0\\
x_{01}&1     &0     &0\\
x_{02}&x_{12}&1     &0\\
x_{03}&x_{13}&x_{23}&1\\
\end{smallmatrix}\right)
\left(\begin{smallmatrix}
y_{70}&0     &0&0\\
0     &y_{61}&0     &0\\
0     &0     &y_{52}&0\\
0     &0     &0     &y_{43}\\
\end{smallmatrix}\right)
\left(\begin{smallmatrix}
1&x_{67}^{-1}&x_{57}^{-1}&x_{47}^{-1}\\
0&1&x_{56}^{-1}&  x_{46}^{-1}\\
0          &0&1&x_{45}^{-1}\\
0          &0          &0&1\\
\end{smallmatrix}\right).
\end{equation}
\begin{proof} {\it of the Theorem~\ref{Ind-fin-irr}}. The irreducibility follows from the Kirillov results
(see Remark~\ref{Kir-irr-max-dim}).
To give another proof of the irreducibility of the induced representation consider the restriction
$T^{m,y_n}\mid_{B(m,n)}$ of this representation to the commutative subgroup $B(m,n)$ of the group
$G^{m}_n$. Note that
$$
{\mathfrak A}^x=\big(\exp(2\pi i tx_{kr})\mid t\in {\mathbb R},\,\,
(k,r)\in \Delta_{m,n}\bigcup\Delta^{(m,n)}\big)''=L^\infty(B_{m,n}\times B^{(m,n)},dx_m\otimes dx^{(m)}).
$$
By Lemma~\ref{W(S_kn)=W(x_kn)m-n} the von Neumann algebra ${\mathfrak A}^S$ generated by this restriction coincides with
$L^\infty(B_{m,n}\times B^{(m,n)},dx_m\otimes dx^{(m)})$. Let now a bounded operator $A$ in a Hilbert
space ${\mathcal H}^{m,n}$ commute with the representation $T^{m,y_n}$. Then $A$ commute by the above arguments
with $L^\infty(B_{m,n}\times B^{(m,n)},dx_m\otimes dx^{(m)})$, therefore the operator $A$ itself is an operator of multiplication by some
essentially bounded function $a\in L^\infty$ i.e. $(Af)(x)=a(x)f(x)$ for
$f\in {\mathcal H}^{m,n}$. Since $A$ commute with the representation $T^{m,y_n}$ i.e. $[A,T^{m,y_n}_t]=0$ for all
$t\in B_{m,n}\times B^{(m,n)}$ we conclude  that
$$
a(x)=a(xt)\,\,({\rm mod}\,\, dx_m\otimes dx^{(m)})\quad \text{for \quad all}\quad t\in B_{m,n}\times B^{(m,n)}.
$$
Since the measure $dh=dx_m\otimes dx^{(m)}$  is the Haar measure on  $G=B_{m,n}\times B^{(m,n)}$, this measure is $G$-right ergodic.
We conclude that $a(x)=const$ $({\rm mod}\, dx_m\otimes dx^{(m)})$.
\end{proof}

\section{Induced representations, infinite-dimensional case}
\label{s.ind-inf}

\subsection{Regular and quasiregular representations of infinite-dimensional groups}
\label{reg+quasireg-rep}
To define the induced representation we explain first
how to define the  regular representation of {\it
infinite-dimensional group} $G$.  Since the initial group in not
locally compact  there is neither Haar (invariant) measure on $G$
(Weil, \cite{Weil53}), nor a $G$-quasi-invariant measure (Xia
Dao-Xing, \cite{{Xia78}}).  We can try to find  some bigger
topological group $\widetilde{G}$ and the $G$-quasi-invariant
measure $\mu$ on $\widetilde{G}$ such that $G$ is the dense
subgroup in $\widetilde{G}.$ In this case we define the {\it right
or left regular representation} of the group $G$ in the space
$L^2(\tilde G, \mu)$ if $\mu^{R_t}\sim \mu$ (resp. $\mu^{L_t}\sim \mu$) for all $t\in G$ as follows:
\begin{equation} \label{T(R,mu)+art}
(T^{R,\mu}_tf)(x)=(d\mu(xt)/d\mu(x))^{1/2}f(xt),\,\,f\in
L^2(\tilde G, \mu),\,\,t\in G,
\end{equation}
\begin{equation}
\label{T(L,mu)+art}
(T^{L,\mu}_tf)(x)=(d\mu(t^{-1}x)/d\mu(x))^{1/2}f(t^{-1}x),\,\,f\in
L^2(\tilde G, \mu),\,\,t\in G.
\end{equation}
\begin{co}
[\rm Ismagilov, 1985] The right regular representation
$T^{R,\mu}:G\rightarrow U(L^2(\tilde G,\mu))$ is irreducible if
and only if\par 1) $\mu^{L_t}\perp \mu\,\,\forall t\in
G\backslash{\{e\}},\,\,$
\par 2) the measure $\mu$ is $G$-ergodic.
\end{co}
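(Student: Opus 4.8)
The plan is to follow the strategy used in the proof of Theorem~\ref{Ind-fin-irr}: reduce irreducibility of $T^{R,\mu}$ to the assertion that the von Neumann algebra $\mathfrak A:=(T^{R,\mu}_t\mid t\in G)''$ contains the maximal abelian subalgebra $L^\infty(\tilde G,\mu)$ of multiplication operators, and then let ergodicity do the rest. Since $L^\infty(\tilde G,\mu)$ is maximal abelian in $B(L^2(\tilde G,\mu))$, one has $\mathfrak A'\subseteq L^\infty(\tilde G,\mu)$ if and only if $L^\infty(\tilde G,\mu)\subseteq\mathfrak A$. Granting $\mathfrak A'\subseteq L^\infty(\tilde G,\mu)$, any $A\in\mathfrak A'$ is multiplication by some $a\in L^\infty(\tilde G,\mu)$; comparing $A T^{R,\mu}_tf$ and $T^{R,\mu}_tAf$ gives $a(xt)=a(x)$ $(\mathrm{mod}\ \mu)$ for all $t\in G$, so $a$ is constant by $G$-ergodicity of $\mu$, and $T^{R,\mu}$ is irreducible; this is condition~2). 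Conversely, if $\mu$ fails to be $G$-ergodic, a right-$G$-invariant set $E$ with $\mu(E)\notin\{0,1\}$ yields the nontrivial projection $M_{\mathbf 1_E}\in\mathfrak A'$, proving the necessity of~2).

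For the necessity of~1) one exhibits a nonscalar intertwiner whenever it fails. Assume $\mu$ is $G$-ergodic and $\mu^{L_{t_0}}\not\perp\mu$ for some $t_0\ne e$. A standard comparison of $\mu$ with $\min(\mu,\mu^{L_{t_0}})$ (which is $G$-quasi-invariant and, with $\mu^{L_{t_0}}$ also $G$-ergodic since left and right translations commute, is $0$ or equivalent to both $\mu$ and $\mu^{L_{t_0}}$) forces $\mu^{L_{t_0}}\sim\mu$. Then left translation by $t_0$, twisted by the Radon--Nikodym cocycle, is a well-defined unitary $\tilde\lambda_{t_0}$ on $L^2(\tilde G,\mu)$ commuting with every $T^{R,\mu}_t$, and it is not a scalar: the relation $\tilde\lambda_{t_0}=cI$ would force $t_0x=x$ $\mu$-a.e., impossible in a group for $t_0\ne e$. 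Hence $T^{R,\mu}$ is reducible, which establishes the necessity of~1).

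The heart of the matter, and the step I expect to be the main obstacle, is the sufficiency: $L^\infty(\tilde G,\mu)\subseteq\mathfrak A$ under~1) and~2). One cannot argue as in the locally compact case, where $\mathfrak A$ equals the left group von Neumann algebra and is transverse to $L^\infty$; it is exactly the singularity condition~1), which annihilates the left regular representation, that creates the extra room. Heuristically I would analyze $A\in\mathfrak A'$ through its integral kernel (equivalently, disintegrate $A$ over the orbit equivalence relation): commutation with $T^{R,\mu}_t$ forces the kernel to be invariant, up to the cocycle, under the diagonal right action $(x,x')\mapsto(xt,x't)$ on $(\tilde G\times\tilde G,\mu\otimes\mu)$, i.e. $A$ must be a ``right convolution'' by some measure $\nu$ on $\tilde G$. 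Boundedness of $A$ on $L^2(\tilde G,\mu)$ together with~1) then forces $\nu$ to be concentrated at $e$ modulo the diagonal, so $A\in L^\infty(\tilde G,\mu)$: a component of $\nu$ at a point $t\ne e$ would contribute (after twisting) the operator $\tilde\lambda_t$, which carries $L^2(\tilde G,\mu)$ into $L^2(\tilde G,\mu^{L_{t^{-1}}})\perp L^2(\tilde G,\mu)$ and hence cannot enter a bounded operator on $L^2(\tilde G,\mu)$.

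Making this last argument rigorous in the infinite-dimensional setting is the delicate point, since $\tilde G$ and the diagonal are $\mu\otimes\mu$-null, the ``kernel'' of $A$ is only a distribution along the diagonal, and one must also control the part of $\nu$ supported on $\tilde G\setminus G$ (where~1) says nothing). In practice this is carried out for a concrete completion $\tilde G$ by approximating $A$ weakly by combinations of the $T^{R,\mu}_{t_n}$ and exploiting the explicit asymptotics of the cocycles $d\mu(xt_n)/d\mu(x)$ to recover a generating family of multiplication operators — precisely the role played, in the finite-dimensional model, by the Gauss-decomposition identity of Lemma~\ref{W(S_kn)=W(x_kn)m-n} and the ensuing argument in Theorem~\ref{Ind-fin-irr}. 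Thus the generic-orbit computations of this paper should be read as the verification of Ismagilov's criterion for the group $B_0^{\mathbb Z}$.
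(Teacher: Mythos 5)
You are trying to prove a statement that the paper itself does not prove: it is stated in the \texttt{co} environment, i.e.\ as a \emph{Conjecture} (attributed to Ismagilov, 1985), and no argument for it appears anywhere in the text. So the first thing to say is that a complete proof here would be a genuinely new contribution, not a reconstruction of something in the paper. Your necessity arguments are essentially correct and standard: non-ergodicity gives the nontrivial projection $M_{\mathbf 1_E}$ in the commutant; and if $\mu^{L_{t_0}}\not\perp\mu$ for some $t_0\neq e$, the Lebesgue decomposition of $\mu^{L_{t_0}}$ with respect to $\mu$ is preserved by the right action (left and right translations commute), so ergodicity kills the singular part, $\mu^{L_{t_0}}\sim\mu$, and the cocycle-twisted left translation $\tilde\lambda_{t_0}$ is a nonscalar unitary in the commutant. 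That half is fine.

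The sufficiency direction, which you correctly identify as ``the heart of the matter,'' is exactly the open part of the conjecture, and your sketch of it has a genuine gap rather than a fixable omission. The claim that a bounded $A\in\mathfrak A'$ disintegrates as a ``right convolution by a measure $\nu$ on $\tilde G$'' has no rigorous foundation in this setting: $G$ is not locally compact, $\mu$ is only quasi-invariant under the countable-dimensional dense subgroup $G\subset\tilde G$, the diagonal in $\tilde G\times\tilde G$ is $\mu\otimes\mu$-null, and commutation with $\{T^{R,\mu}_t\}_{t\in G}$ constrains a kernel only along the $G$-orbit equivalence relation, which need not support any usable ``convolution'' structure. In particular your final step --- that a component of $\nu$ at $t\neq e$ ``cannot enter a bounded operator'' because of condition 1) --- presupposes the decomposition whose existence is the whole problem, and says nothing about mass on $\tilde G\setminus G$, where condition 1) is silent. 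This is precisely why the known results (including Theorems~\ref{Ind-fin-irr} and \ref{Ind-infin-irr} of this paper, and the regular-representation results of \cite{Kos90,Kos92}) do not argue this way: they prove $L^\infty\subseteq\mathfrak A$ \emph{constructively}, for specific groups and specific (Gaussian) measures, by exhibiting the multiplication operators $x_{kr}$ as limits of explicit expressions in the generators --- the role played here by the Gauss-decomposition Lemmas~\ref{W(S_kn)=W(x_kn)m-n} and \ref{W(S_kn)=W(x_kn)B^Z}. Your last paragraph correctly describes this state of affairs, but describing where the hard work lives is not the same as doing it; as written, the proposal establishes only the ``only if'' half of the conjecture.
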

Analogously we can define  the {\it quasiregular representation}.
Namely, if $H$ is a closed subgroup of the group $G$, then on the
space $X=\widetilde{H\backslash G}= \tilde H\backslash \tilde G$
the right action of the group $G$ is well defined, where $\tilde
G$ (resp. $\tilde H$) is some completion of the group $G$ (resp.
$H$). If we have some $G$-right-quasi-invariant measure $\mu$ on
$X$ one may define the ``quasiregular representation" of the group
$G$ in the space $L^2(X,\mu)$ as in a locally compact case:
$$
(\pi_t^{R,\mu,X}f)(x)= (d\mu(xt)/d\mu(x))^{1/2}f(xt),\quad t\in G.
$$
The regular and quasiregular representations for general
infinite-dimensional groups were introduced and investigated in e.g.
\cite{KosAlb06J,Kos90,Kos92,Kos94,Kos02.3}.
\subsection{Induced representations for infinite-dimensional groups}
\label{Ind-rep}
The induced representation ${\rm Ind}_H^GS$  of a locally-compact group is the unitary representation
of the group $G$ associated with a unitary representation $S$ of a subgroup $H$ of the group $G$
(see Section~\ref{s.ind-loc-comp}).

As it was mentioned in section~\ref{orb-meth(n)} (see \cite{Kir62, Kir94}) all unitary irreducible
representations up to equivalence $\hat{G_n}$ of the nilpotent group $G_n=B(n,{\mathbb R})$, are
obtained as induced representations ${\rm Ind}^{G_{n}}_H U_{f,H}$
associated with a points $f\in {\mathfrak g}^*_n$ and the
corresponding {\it subordinate} subgroup $H\subset G_n$.
The induced representation ${\rm Ind}^{G_n}_H U_{f,H}$ is defined
canonically in the Hilbert space $L^2(H\backslash G_n,\mu)$.

A.~Kirillov \cite{Kir94}, Chapter I, \S 4, p.10 says: {\it "The
method of induced representations is not directly applicable to
infinite-dimensional groups (or more precisely to a pair $G\supset H
$) with an infinite-dimensional factor $H\backslash G$)".}

Our aim is to {\it develop the concept of induced representations for infinite-dimensi\-onal groups}.
Let we have the infinite-dimensional group $G$ and a unitary representation  $S:H\rightarrow U(V)$ in a Hilbert space $V$ of a subgroup $H$ of
the group $G$ such that the factor space $H\backslash G$ is infinite-dimensional.

In general, it is difficult to construct $G$-quasi-invariant measure on an infinite-dimensional
homogeneous space $H\backslash G$.
As is the case of the regular
and quasiregular
representations of infinite-dimensional groups $G$ (see
Subsection~\ref{reg+quasireg-rep}) it is reasonable to construct
some $G$-quasi-invariant measure on a {\it suitable completion}
$\widetilde{H\backslash G}=\tilde{H}\backslash \tilde{G}$ of the
initial space $H\backslash G$ {\it in a certain topology}, where
$\tilde{H}$ (resp. $\tilde G$) is some completion of the group $H$
(resp. $G$). To go further we should be able to {\it extend the
representation} $ S: H\rightarrow U(V)$ of the group $H$ to the
representation $\tilde S:\tilde H\rightarrow U(V)$ of the completion
$\tilde H$ of the group $H$.

Finally, the induced representation of the group $G$ associated with a unitary representation $S$ of a subgroup $H$
will depend on two  completions $\tilde H$ and $\tilde G$ of the subgroup $H$ and
the group $G$, on an extension $\tilde S:\tilde H\rightarrow U(V)$ of
the representation $S:H\rightarrow U(V)$ and on a choice of the
$G$-quasi-invariant measure $\mu$ on an appropriate completion $\tilde X=\tilde
H\backslash \tilde G$  of the space $H\backslash G$.

Hence the procedure of induction will
not be unique but nevertheless well-defined (if a
$G$-quasi-invariant measure on $\widetilde{H\backslash G}$ exists). So  the uniquely defined
induced representation ${\rm Ind}^G_H S$ in the Hilbert space
$L^2(H\backslash G,V,\mu)$ (in the case of a locally-compact group $G$) should be replaced by the family of induced
representations ${\rm Ind}_{\tilde H,H}^{\tilde G,G,\mu}(\tilde S,S)$
in the Hilbert spaces $L^2(\tilde H\backslash \tilde G,V,\mu)$ depending on
different completions $\tilde G$ of the group $G$, completions $\tilde H$ of the group $H$ and different
$G$-quasi-invariant measures $\mu$ on $\tilde H\backslash\tilde G$.
\begin{ex}[\cite{Kos90,Kos94}]
\label{ex.ind-reg} {\rm Regular representations}  $T^{R,\mu}$ of
the infinite-dimensional group $G$ in the space $L^2(\tilde
G,\mu)$, associated with the completion $\tilde G$ of the group
$G$ and a $G$-right -quasi-invariant measure $\mu$ on $\tilde G$,
is a particular case of the induced representation (see
Remark~\ref{reg=ind})
$$
T^{R,\mu}={\rm Ind}_{e}^{\tilde G,G,\mu}(Id),
$$
generated by the trivial representation $S=Id$  of the trivial subgroup $H=\{e\}$
(as in the case of a locally compact groups).
\end{ex}
\begin{ex} [\cite{KosAlb06J,Kos02.3}]
\label{ex.ind-q-reg}{\rm Quasi-regular representations}
$\pi^{R,\mu,X}$ of the infinite-dimen\-si\-onal group $G$ in the
space $L^2(X,\mu)$ where $X=\tilde H\backslash \tilde G$ and $H$
is some subgroup of the group $G$ is a particular case of the
induced representation (see Remark~\ref{reg=ind})
$$
 \pi^{R,\mu,X}={\rm Ind}_{\tilde H,H}^{\tilde
G,G,\mu}(Id)
$$
generated by the trivial representation $S=Id$ of the {\rm completion}
$\tilde H$ in the group $\tilde G$ of the subgroup $H$ in the group
$G$.
\end{ex}
Let $G$ be an infinite-dimensional group  and $S:H\rightarrow U(V)$ be a unitary representation  in a Hilbert space $V$ of
the subgroup $H\subset G$, such that the space $H\backslash G$ is infinite-dimensional. We  give the following definition.
\begin{df}
\label{df.ind-inf-dim} The induced representation
$$
{\rm Ind}_{\tilde H,H}^{\tilde G,G,\mu}(\tilde S,S),
$$
generated by the unitary representations $S:H\rightarrow U(V)$ of
the subgroup $H$ in the group $G$ is defined (similarly to
(\ref{ex.ind-reg}) and (\ref{ex.ind-q-reg})) as follows:
\par 1) we should first find some completion $\tilde H$ of the group $H$
such that
$$
\tilde S:\tilde H\rightarrow U(V)
$$
is the continuous unitary representation of the group $\tilde H$,
such that $\tilde S\vert_{H}=S$,
\par 2) take any $G$-right-quasi-invariant measure $\mu$ on the   an appropriate completion
$\tilde X=\tilde H\backslash \tilde G$ of the space $X=H\backslash G$, on which the group $G$ acts from
the right, where $\tilde{H}$ (resp. $\tilde G$) is a suitable completion of the group $H$ (resp. $G$),
\par 3) in the space $L^2( \tilde X,V,\mu)$
of all vector-valued functions $f$ on $\tilde X$ with values in $V$ such that
$$
\Vert f\Vert^2:=\int_{\tilde X} \Vert f(x)\Vert^2_Vd\mu(x)<\infty,
$$
define the representation of the group $G$ by the following
formula
\begin{equation}\label{ind-inf-dim}
(T_tf)(x)=S(\tilde
h(x,t))\left(\frac{d\mu(xt)}{d\mu(x)}\right)^{1/2}f(xt),\quad x\in \tilde X,\,\,t\in G,
\end{equation}
where $\tilde h$ is defined by $$\tilde s(x)t=\tilde h(x,t)\tilde
s(xt).$$
The section $s:H\rightarrow G$ of the projection $p:G\rightarrow H$
should be extended to the appropriate section $\tilde s:\tilde
H\rightarrow \tilde G$ of the extended projection $\tilde p:\tilde
G\rightarrow \tilde H$.
\end{df}
The comparison of the induced representation for locally compact
group and the above definition for infinite-dimensional groups may be given in the following table:
\vskip 0.5cm
\begin{tabular}{|p{0.3cm}|p{0.9cm}|p{5.15cm}|p{5.15cm}|}\hline
1&$G$&$G$ loc.comp. &${\rm dim}\,G=\infty$\\   \hline
2&$H$ &$H\subset G$&$H\subset G$\\   \hline
3&$S$ &$S:H\rightarrow U(V)$&$S\!:\!H\!\rightarrow\!
U(V)\!\Rightarrow \!\tilde S:\tilde H\!\rightarrow \!U(V)$\\
\hline
4&$X$ &$X=H\backslash G$&$\tilde X=\widetilde{H\backslash G}=\tilde
H\backslash \tilde G$\\   \hline
5&${\mathcal H}$ &$L^2(X=H\backslash G,V,\mu)$&$L^2(\tilde X=\tilde
H\backslash \tilde G,V,\mu)$\\   \hline
6&${\rm Ind}$ &${\rm Ind}_H^GS$&${\rm Ind}_{\tilde H,H}^{\tilde
G,G,\mu}(\tilde S,S)$\\   \hline
7&$T_t$
&$(T_tf)(x)\!=\!S(h(x,t))(\frac{d\mu(xt)}{d\mu(x)})^{1/2}f(xt)$&$(T_tf)(x)\!=\!
\tilde S(\tilde h(x,t))(\frac{d\mu(xt)}{d\mu(x)})^{1/2}f(xt)$\\
\hline
8&$p$ &$p:G\rightarrow X$&$\tilde p:\tilde G\rightarrow \tilde  X$\\
\hline
9&$s$&$s:X\rightarrow G$&$s:H\backslash G\rightarrow G\Rightarrow
\tilde s:
\widetilde{H\backslash G}\rightarrow \tilde G$\\
\hline
10&$h(x,t)$ &$s(x)t=h(x,t)s(xt)$&$\tilde s(x)t=\tilde h(x,t)\tilde s(xt)$\\
\hline

\end{tabular}

\subsection{How to develop the orbit method for infinite-dimensional
``nilpotent'' group $B_0^{\mathbb N}$ and $B_0^{\mathbb Z}$?}
\label{orb-meth-inf}
\index{representation!induced}
{\it We would like to develop the orbit method for
infinite-dimensional ``nilpotent'' group} $G= \varinjlim_{n}G_n$
with $G_n=B(n,{\mathbb R})$. The corresponding Lie algebra
${\mathfrak g}$ is the {\it inductive limit} ${\mathfrak
g}=\varinjlim_{n}{\mathfrak b}_n$ of upper triangular matrices, so
as the linear space it is isomorphic to the space ${\mathbb
R}_0^\infty$ of finite sequences $(x_k)_{k\in{\mathbb N}}$ hence the
dual space ${\mathfrak g}^*$ is isomorphic to the space ${\mathbb
R}^\infty$ of all sequences $(x_k)_{k\in{\mathbb N}}$, but {\it  the
latter  space ${\mathbb R}^\infty$ is too large to manage  with it,
for example to equip  with a Hilbert structure or to describe all
orbits}. To make it less it is reasonable to increase  the initial
group $G$ or to make completion $\tilde G$ of this group in some
stronger topology.
\index{inductive limit}

{\it   To develop the orbit method
for  groups $B_0^{\mathbb N}$ and $B_0^{\mathbb Z}$} we should answer some questions:
\par (1) How to define the {\it appropriate completion} $\tilde G$ of the group $G$,
corresponding Lie algebras ${\mathfrak
g}$ (resp. $\tilde{\mathfrak
g}$) and corresponding dual spaces ${\mathfrak g}^*$ (resp.$\tilde{\mathfrak g}^*$)?
\par (2) Which {\it pairing} should we use between ${\mathfrak g}$ and ${\mathfrak
g}^*$?
\par (3) Let the dual space ${\mathfrak g}^*$, some element $f\in {\mathfrak g}^*$
 and corresponding algebra ${\mathfrak h}$, subordinate to the element
$f$, are chosen. How to {\it define the corresponding induced
representation} ${\rm Ind}^G_H U_{f,H}$ and {\it study its irreducibility} ?
\par (4) Shall we get all irreducible representations of the corresponding groups, using induced representations?
\par (5) Find the criteria of irreducibility and equivalence  of induced representations.

The problem of {\it completion} of the inductive limit group $G=\varinjlim_{n}G_n$, where $G_n$ are
finite-dimensional classical groups were studied by
A.~Kirillov (\cite{Kir73}, 1972) for the group
$U(\infty)\!=\varinjlim_{n}U(n)$ and G.~Olshanski\u\i (\cite{Ols90},
1990) for inductive limit of classical groups. They described all unitary
irreducible representations of the corresponding groups  $G=\varinjlim_{n}G_n$, {\it continuous}
in stronger topology, namely {\it in the strong operator
topology}. The description of the dual $\hat G$ of the initial group $G=\varinjlim_{n}G_n$ is much
more complicated.

In \cite{Kos88} (see details in section \ref{Hilb-Lie-GL_2(a)}) we have constructed for the group
${\rm GL}_0(2\infty,{\mathbb R})$ $=\varinjlim_{n}{\rm GL}(2n-1,{\mathbb R})$
a family of the Hilbert-Lie groups ${\rm GL}_2(a),\,\,a\in{\mathfrak A}$ such that\\
a) ${\rm GL}_0(2\infty,{\mathbb R})\subset {\rm GL}_2(a)$ and ${\rm GL}_0(2\infty,{\mathbb R})$ is dense in ${\rm GL}_2(a)$
for all $a\in{\mathfrak A}$,\\
b) ${\rm GL}_0(2\infty,{\mathbb R})=\cap_{a\in{\mathfrak A}}{\rm GL}_2(a)$,\\
c) {\it any  continuous representation} of the group ${\rm GL}_0(2\infty,{\mathbb R})$ {\it is in fact
continuous} in some stronger topology, namely {\it in a topology of a suitable Hilbert -Lie group}
${\rm GL}_2(a)$.
\index{topology!strong operator}

(1) Therefore, as we show in Sections~\ref{Hilb-Lie-B_2(a)}, \ref{Hilb-Lie-GL_2(a)}  it is sufficient
to consider a {\it Hilbert-Lie completions} $B_2(a)$ of the initial group $B_0^{\mathbb Z}$.

(2) In this case the {\it pairing} between the corresponding Hilbert-Lie algebra ${\mathfrak b}_2(a)$
and its dual ${\mathfrak b}_2(a)^*$ is correctly defined by the trace (as in the finite-dimensional case).

(3.1) We  define in Section~\ref{gen-orb,Z} {\it the induced representations} of the group $B_0^{\mathbb Z}$
 corresponding  to a special orbits, {\it generic orbits}, using schema   given in Section~\ref{Ind-rep}. We
consider only the simplest example of $G-$quasi-invariant measures on $\tilde X=\tilde H\setminus\tilde G$,
namely the infinite product of one-dimensional Gaussian measures.

(3.2) How to construct the {\it induced representation corresponding
to an arbitrary orbit}?
%
\begin{co}
Two induced representations ${\rm Ind}^{\tilde G,\mu_1}_{H_1}
U_{f_1,H_1}$ and ${\rm Ind}^{\tilde G,\mu_2}_{H_2} U_{f_2,H_2}$ are
equivalent if and only if the corresponding measures $\mu_1$ and
$\mu_2$ are equivalent and the functionals $f_1$ and $f_2$ belong to
the same orbit of $(\tilde{\mathfrak g})^*$.
\end{co}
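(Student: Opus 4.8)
Since this is a conjecture we only outline a strategy, stated for definiteness for the generic orbits of Subsection~\ref{gen-orb,Z}, for which the completions $\tilde H$, $\tilde G$, the extension $\tilde S$ and the Gaussian $G$-quasi-invariant measures $\mu$ are at hand; the general case presupposes an answer to item (3.2) of Subsection~\ref{orb-meth-inf}. The plan is to split the asserted equivalence into its two implications and, in the ``only if'' direction, to extract the two conclusions --- equivalence of the measures and coincidence of the orbits --- one after the other, establishing $\mu_1\sim\mu_2$ first.

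\emph{The ``only if'' direction.} Let $W\colon L^2(\tilde X_1,\mu_1)\to L^2(\tilde X_2,\mu_2)$ intertwine $T_1={\rm Ind}^{\tilde G,\mu_1}_{H_1}U_{f_1,H_1}$ and $T_2={\rm Ind}^{\tilde G,\mu_2}_{H_2}U_{f_2,H_2}$, where $\tilde X_i=\widetilde{H_i\backslash G}$ carries $\mu_i$. The first task is the infinite-dimensional analogue of Lemma~\ref{W(S_kn)=W(x_kn)m-n}: that the restriction of $T_i$ to the commutative subgroup $B(m_i)\subset G$ generates the maximal abelian von Neumann algebra $\mathfrak A^x=L^\infty(\tilde X_i,\mu_i)$. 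As in the proof of Theorem~\ref{Ind-fin-irr}, the inclusion $\mathfrak A^S\subseteq\mathfrak A^x$ follows from the factorization $\mathbb S=(x^{(m)})^Ty^T(x_m^{-1})^T$ of (\ref{S=B^T}) together with the series (\ref{x^{-1}_{kn}:=}) for $x^{-1}_{kn}$, once these series are shown to converge $\mu_i$-almost everywhere; the reverse inclusion comes, as in (\ref{SJ=dec2}), from recognizing (\ref{S=B^T}) as the Gauss decomposition $\mathbb S J=LDU$ and inverting it by the infinite-matrix Gauss decomposition of Subsection~\ref{Gauss-dec-inf}, which recovers $x^{(m)}$, $x_m^{-1}$, and hence $x_m$, from $\mathbb S$. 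Granting this, $W$ conjugates $L^\infty(\tilde X_1,\mu_1)$ onto $L^\infty(\tilde X_2,\mu_2)$, hence is a weighted composition operator $(Wf)(x)=c(x)f(\phi(x))$ with $\phi$ a measure-class isomorphism and $|c|\equiv 1$, and unitarity forces $\mu_1\circ\phi^{-1}\sim\mu_2$, that is $\mu_1\sim\mu_2$ after the canonical identification of $\tilde X_1$ with $\tilde X_2$. Substituting this form of $W$ and the cocycle $S(\tilde h(x,t))=\exp\bigl(2\pi i\,{\rm tr}((t-I)B(x,y))\bigr)$ of (\ref{9.S(h)=exp(2pi tr(B))}) into $WT_1(I+tE_{kr})=T_2(I+tE_{kr})W$, and using the $G$-ergodicity of $\mu_i$ together with the explicit polynomial dependence of $\mathbb S=B(x,y)^T$ on $x$ from (\ref{B(x,y)}), one deduces that $\phi$ is affine and that the matrices $B(x,\cdot)$ attached to $f_1$ and $f_2$ agree up to this affine change; since the anti-diagonal entries of $\mathbb S$ are the numbers $y_{m+r+1,m-r}$, which through the minors $\Delta_k$ of (\ref{Gen-orb}) form a complete invariant of the coadjoint action, $f_1$ and $f_2$ lie on the same orbit of $(\tilde{\mathfrak g})^*$.

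\emph{The ``if'' direction.} Conversely, suppose $\mu_1\sim\mu_2$ and $f_2={\rm Ad}^*_g(f_1)$ for some $g$ in $G$ (or a suitable completion acting on $\tilde X$); then one may take $\mathfrak h_2=g\mathfrak h_1g^{-1}$, so that $U_{f_2,H_2}(ghg^{-1})=U_{f_1,H_1}(h)$, and define
\[
(Wf)(x)=\Bigl(\frac{d\mu_1(xg)}{d\mu_2(x)}\Bigr)^{1/2}f(xg),\qquad f\in L^2(\tilde X_1,\mu_1),
\]
which is meaningful because $\mu_1$ is $G$-quasi-invariant and $\mu_1\sim\mu_2$. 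One then checks that $W$ is unitary from $L^2(\tilde X_1,\mu_1)$ onto $L^2(\tilde X_2,\mu_2)$ and intertwines $T_1$ with $T_2$; this is the infinite-dimensional transcription of Kirillov's argument for Theorem~\ref{9.t.Kir}(c), via the cocycle identity $\tilde s(x)t=\tilde h(x,t)\tilde s(xt)$ and ${\rm Ad}^*$-equivariance, with the extra Radon--Nikodym factor absorbing the change of measure.

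\emph{Main obstacle.} The analytic heart of the argument is the infinite-dimensional Lemma~\ref{W(S_kn)=W(x_kn)m-n}: one must show that the Gauss decomposition of the infinite matrix $\mathbb S J$ and the series (\ref{x^{-1}_{kn}:=}) converge $\mu_i$-almost everywhere for the chosen Gaussian measure, so that $x^{(m)}$ and $x_m^{-1}$ are genuinely affiliated with $\mathfrak A^S$, and, simultaneously, that $\mu_i$ is $G$-ergodic --- ergodicity being needed already for irreducibility and, here, to pass from ``$W$ is a weighted composition operator'' to ``$\phi$ is affine and respects the orbit''. A second, more structural difficulty is that $\mu_1\sim\mu_2$ is a genuinely restrictive hypothesis (Kakutani's dichotomy for infinite product measures), so in the ``if'' direction the $g$-translate of $\mu_1$ need not be equivalent to $\mu_2$, or even to $\mu_1$, unless $g$ is chosen suitably within its coset modulo the stabilizer of $f_1$; making this compatibility precise is the delicate step. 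Finally, removing the restriction to generic orbits requires first constructing, for an arbitrary orbit, the completions $\tilde H$, $\tilde G$, the extension $\tilde S$ and a $G$-quasi-invariant measure on $\tilde H\backslash\tilde G$ --- itself an open problem.
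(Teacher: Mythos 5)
The statement you are proving is labelled as a \emph{Conjecture} in the paper (the environment \texttt{co} is ``Conjecture''), and the paper supplies no proof of it whatsoever --- it appears in Subsection~\ref{orb-meth-inf} precisely as one of the open questions about developing the orbit method for $B_0^{\mathbb Z}$. So there is nothing in the paper to compare your argument against; what you have written is a research programme, and you are right to present it as an outline rather than a proof. As a programme it is sensible and consistent with the paper's toolkit (the maximal-abelian-algebra argument of Lemmas~\ref{W(S_kn)=W(x_kn)m-n} and \ref{W(S_kn)=W(x_kn)B^Z}, the Gauss decomposition of ${\mathbb S}J$, and the translation-plus-Radon--Nikodym intertwiner modelled on the locally compact case).

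That said, it is not a proof, and two of the gaps are more serious than your closing paragraph suggests. First, in the ``only if'' direction, knowing that $W$ carries $L^\infty(\tilde X_1,\mu_1)$ onto $L^\infty(\tilde X_2,\mu_2)$ gives only a spatial isomorphism of measure algebras, i.e.\ a point map $\phi$ with $\mu_1\circ\phi^{-1}\sim\mu_2$; the conjecture asserts $\mu_1\sim\mu_2$ as measures on the same completion, which requires showing that the $G$-equivariant $\phi$ is (essentially) the identity --- and for that you also need Lemma~\ref{W(S_kn)=W(x_kn)B^Z} applied to $T_2$ restricted to the subgroup $B(m_1)$ attached to $f_1$, which the paper only proves for the subgroup attached to $f_2$; if $m_1\ne m_2$ this step is not covered. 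Moreover the claim that the anti-diagonal entries of ${\mathbb S}$, via the minors $\Delta_k$ of (\ref{Gen-orb}), ``form a complete invariant of the coadjoint action'' is proved nowhere for infinite matrices --- in the finite-dimensional case it is quoted from Kirillov without proof, and its infinite-dimensional analogue is exactly the kind of statement the conjecture is waiting on. Second, in the ``if'' direction, the element $g$ with $f_2={\rm Ad}^*_g(f_1)$ need only lie in whatever completion acts on $(\tilde{\mathfrak g})^*$, while $\mu_1$ is quasi-invariant only under the dense subgroup $B_0^{\mathbb Z}$; you flag this, but without resolving it the formula for $W$ is not even well defined. So the proposal should be read as a correct identification of the obstacles rather than a route around them.
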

\subsection{Hilbert-Lie groups ${\rm GL}_2(a)$}\label{Hilb-Lie-GL_2(a)}
We show that the {\it Hilbert-Lie groups} appear
naturally in the representation theory of infinite-dimensional
matrix group.
The remarkable fact is that for the inductive limit
$G=\varinjlim_{n}G_n$ of matrix groups $G_n\subset {\rm
GL}(2n-1,{\mathbb R})$ it is sufficient to consider only the {\it
Hilbert completions} of the initial group $G$ and of the spaces
$H\backslash G$.

Let us consider the group ${\rm GL}_0(2\infty,{\mathbb R})=\varinjlim_{n}{\rm
GL}(2n-1,{\mathbb R})$ with respect to the symmetric embedding
$i^s_n:G_n\mapsto G_{n+1}$, $G_n\ni x \mapsto x+E_{-n,-n}+E_{nn}\in
G_{n+1}$, where $G_n={\rm GL}(2n-1,{\mathbb R})$. We consider here only the real matrices.

The {\it Hilbert-Lie group} ${\rm GL}_2(a)$ we  define (see \cite{Kos88}) by its
{\it Hilbert-Lie algebra} ${\mathfrak g}l_2(a)$ with composition $[x,y]=xy-yx$
$$
{\mathfrak g}l_2(a)=\{x=\sum_{k,n\in{\mathbb Z}}x_{kn}E_{kn}\mid
\Vert x \Vert^2_{{\mathfrak g}l_2(a)}= \sum_{k,n\in{\mathbb Z}}\mid
x_{kn}\mid^2a_{kn}<\infty\},\,\,a\in {\mathfrak A}_{\rm GL},
$$
$$
{\rm GL}_2(a)=\{I+x\,\mid\,(I+x)^{-1}=1+y\quad x,y\in {\mathfrak g}l_2(a)\}.
$$
\index{algebra!Hilbert-Lie}\index{group!Hilbert-Lie}
To be more precise, let us consider an analogue $\sigma_2(a)$  of the algebra of the Hilbert-Schmidt operators $\sigma_2(H)$ in a Hilbert space
$H$:
$$
\sigma_2(a)=\{x=\sum_{k,n\in{\mathbb Z}}x_{kn}E_{kn}\mid
\Vert x \Vert^2_{\sigma_2(a)}= \sum_{k,n\in{\mathbb Z}}\mid
x_{kn}\mid^2a_{kn}<\infty\}.
$$
\begin{lem}[\cite{Kos88}] The Hilbert space $\sigma_2(a)$ is an (associative) Hilbert algebra
(i.e. $\Vert xy\Vert\leq C \Vert x\Vert\Vert y\Vert,\,\,x,y\in \sigma_2(a)$) if and only if the weight
$a=(a_{kn})_{(k,n)\in {\mathbb Z}^2}$ belongs to the set ${\mathfrak A}_{\rm GL}$ defined as follows:
\begin{equation}
\label{weight(a)} {\mathfrak A}_{\rm GL}=\{a=(a_{kn})_{(k,n)\in {\mathbb
Z}^2}\mid 0< a_{kn}\leq Ca_{km}a_{mn},\quad k,n,m\in{\mathbb
Z},\,C>0\}.
\end{equation}
\end{lem}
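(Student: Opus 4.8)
The plan is to establish the two directions of the equivalence separately. The forward direction (Hilbert-algebra property $\Rightarrow$ weight condition) will come from testing the submultiplicativity inequality on matrix units, and the converse from a weighted Cauchy--Schwarz estimate applied to the convolution sum $(xy)_{kn}=\sum_m x_{km}y_{mn}$; the condition $a_{kn}\le Ca_{km}a_{mn}$ is precisely what is needed to absorb the ``weight defect'' $(a_{km}a_{mn})^{-1/2}$ that appears in that estimate.

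For necessity, suppose there is $C>0$ with $\Vert xy\Vert_{\sigma_2(a)}\le C\Vert x\Vert_{\sigma_2(a)}\Vert y\Vert_{\sigma_2(a)}$ for all $x,y\in\sigma_2(a)$. Each $a_{pq}$ being a positive real, the matrix unit $E_{pq}$ lies in $\sigma_2(a)$ with $\Vert E_{pq}\Vert^2_{\sigma_2(a)}=a_{pq}$. Taking $x=E_{km}$, $y=E_{mn}$ and using $E_{km}E_{mn}=E_{kn}$ gives $a_{kn}^{1/2}\le C\,a_{km}^{1/2}a_{mn}^{1/2}$, i.e. $a_{kn}\le C^2a_{km}a_{mn}$ for all $k,m,n\in{\mathbb Z}$; together with $a_{kn}>0$ this is exactly $a\in{\mathfrak A}_{\rm GL}$.

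For sufficiency, assume $a\in{\mathfrak A}_{\rm GL}$ with constant $C$ and let $x,y\in\sigma_2(a)$. For fixed $k,n$ write
$$
|(xy)_{kn}|\le\sum_m|x_{km}|\,|y_{mn}|
=\sum_m\big(|x_{km}|a_{km}^{1/2}\big)\big(|y_{mn}|a_{mn}^{1/2}\big)(a_{km}a_{mn})^{-1/2}.
$$
Since $(a_{km}a_{mn})^{-1/2}\le C^{1/2}a_{kn}^{-1/2}$ and this factor does not depend on $m$, pulling it out and applying the Cauchy--Schwarz inequality in $m$ yields
$$
|(xy)_{kn}|\le C^{1/2}a_{kn}^{-1/2}\,\xi_k\,\eta_n,\qquad
\xi_k^2:=\sum_m|x_{km}|^2a_{km},\qquad \eta_n^2:=\sum_m|y_{mn}|^2a_{mn}.
$$
In particular the series for $(xy)_{kn}$ converges absolutely, so $xy$ is a well-defined matrix, and $|(xy)_{kn}|^2a_{kn}\le C\,\xi_k^2\eta_n^2$. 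Summing over $k$ and $n$ (all terms nonnegative, so there is no convergence issue) gives $\Vert xy\Vert^2_{\sigma_2(a)}\le C(\sum_k\xi_k^2)(\sum_n\eta_n^2)=C\,\Vert x\Vert^2_{\sigma_2(a)}\Vert y\Vert^2_{\sigma_2(a)}$, which is the asserted Hilbert-algebra bound (with constant $C^{1/2}$).

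The argument is short and I do not expect a genuine obstacle; the only point demanding a moment's thought is the grouping in the Cauchy--Schwarz step — splitting each summand of the convolution into a weighted entry of the $k$-th row of $x$, a weighted entry of the $n$-th column of $y$, and the residual factor $(a_{km}a_{mn})^{-1/2}$ — after which the hypothesis on $a$ does all the work. Finiteness of the $\xi_k$, $\eta_n$ and the $\ell^2$-summability of $(\xi_k)$ and $(\eta_n)$ are immediate from $x,y\in\sigma_2(a)$, so the estimate closes with no extra hypotheses.
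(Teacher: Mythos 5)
Your proof is correct: the necessity direction by testing the submultiplicativity on matrix units ($\Vert E_{pq}\Vert^2_{\sigma_2(a)}=a_{pq}$, $E_{km}E_{mn}=E_{kn}$) and the sufficiency direction by the weighted Cauchy--Schwarz estimate on $(xy)_{kn}=\sum_m x_{km}y_{mn}$, using $(a_{km}a_{mn})^{-1/2}\leq C^{1/2}a_{kn}^{-1/2}$ to absorb the weight defect, are exactly the standard argument for such weighted Hilbert algebras. The paper itself gives no proof (it only cites \cite{Kos88}), so there is nothing to compare against; your argument is complete and closes the claim with the constant $C^{1/2}$ in place of $C$, which is harmless since the set ${\mathfrak A}_{\rm GL}$ only requires the existence of \emph{some} positive constant.
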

We define the Hilbert-Lie algebra ${\mathfrak g}l_2(a)$ as the Hilbert space $\sigma_2(a)$ with an operation  $[x,y]=xy-yx$.
\begin{cor}
The Hilbert space ${\mathfrak g}l_2(a)$ is a Hilbert-Lie algebra if and only if the weight $a=(a_{kn})_{(k,n)\in {\mathbb Z}^2}$
belongs to the set ${\mathfrak A}_{\rm GL}$.
\end{cor}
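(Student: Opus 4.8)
The plan is to deduce the Corollary from the preceding Lemma, since ${\mathfrak g}l_2(a)$ is by definition the Hilbert space $\sigma_2(a)$ equipped with the bracket $[x,y]=xy-yx$, and calling it a \emph{Hilbert-Lie algebra} amounts precisely to requiring that this bracket be a well-defined \emph{continuous} bilinear map $\sigma_2(a)\times\sigma_2(a)\to\sigma_2(a)$ --- antisymmetry and the Jacobi identity being automatic, as matrix multiplication is associative wherever it is defined. Thus the content of the Corollary is the equivalence between boundedness of the commutator and boundedness of the product on the weighted Hilbert matrix space $\sigma_2(a)$.

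First I would treat the ``if'' direction. If $a\in{\mathfrak A}_{\rm GL}$, then by the Lemma $\sigma_2(a)$ is an associative Hilbert algebra, so $\Vert xy\Vert_{\sigma_2(a)}\le C\Vert x\Vert_{\sigma_2(a)}\Vert y\Vert_{\sigma_2(a)}$ for some $C>0$ and all $x,y$; in particular $xy,yx\in\sigma_2(a)$, and hence $\Vert[x,y]\Vert_{\sigma_2(a)}\le\Vert xy\Vert_{\sigma_2(a)}+\Vert yx\Vert_{\sigma_2(a)}\le 2C\Vert x\Vert_{\sigma_2(a)}\Vert y\Vert_{\sigma_2(a)}$. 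So the bracket is well defined and continuous, and together with the free antisymmetry and Jacobi identity this exhibits ${\mathfrak g}l_2(a)$ as a Hilbert-Lie algebra.

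For the ``only if'' direction I would test the boundedness hypothesis $\Vert[x,y]\Vert_{\sigma_2(a)}\le C'\Vert x\Vert_{\sigma_2(a)}\Vert y\Vert_{\sigma_2(a)}$ on matrix units, using $\Vert E_{kn}\Vert_{\sigma_2(a)}^2=a_{kn}$. For pairwise distinct $k,m,n$ one has $[E_{km},E_{mn}]=E_{kn}$, which gives $a_{kn}\le(C')^2 a_{km}a_{mn}$. For $k=n\neq m$ one has $[E_{km},E_{mk}]=E_{kk}-E_{mm}$, and since $E_{kk}\perp E_{mm}$ in $\sigma_2(a)$ this yields $a_{kk}+a_{mm}\le(C')^2a_{km}a_{mk}$, hence in particular $a_{kk}\le(C')^2a_{km}a_{mk}$. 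To cover the remaining (degenerate) triples with $m=k$ or $m=n$ I would use $[E_{kk},E_{kn}]=E_{kn}$ for $k\neq n$, which forces $a_{kk}\ge(C')^{-2}$ for every $k\in{\mathbb Z}$; then $(C')^2a_{kk}\ge1$ gives $a_{kn}\le(C')^2a_{kk}a_{kn}$, and symmetrically $a_{kn}\le(C')^2a_{kn}a_{nn}$. Taking $C=(C')^2$, all inequalities $0<a_{kn}\le Ca_{km}a_{mn}$ with $k,m,n\in{\mathbb Z}$ hold, i.e.\ $a\in{\mathfrak A}_{\rm GL}$.

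I do not expect a real obstacle here: this is genuinely a corollary of the Lemma. The only point I would be careful about is the bookkeeping for the degenerate index triples in the ``only if'' direction --- the naive matrix-unit test only produces the required inequalities for distinct indices, so one additionally has to extract the lower bound on the diagonal weights $a_{kk}$ in order to close the argument.
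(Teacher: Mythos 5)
Your proof is correct, and it actually supplies more than the paper does: the Corollary is stated there without any argument, the intended justification being simply ``immediate from the Lemma''. The ``if'' half is indeed the purely formal deduction $\Vert [x,y]\Vert \le \Vert xy\Vert + \Vert yx\Vert \le 2C\Vert x\Vert\,\Vert y\Vert$ that the paper implicitly has in mind. The ``only if'' half is where your write-up adds genuine content, and rightly so: a bound on the commutator $xy-yx$ does not formally yield a bound on the product $xy$ itself, so one cannot literally invoke the ``only if'' direction of the Lemma; instead the weight inequalities $a_{kn}\le C a_{km}a_{mn}$ must be re-derived from bracket-boundedness alone. Your matrix-unit test does this correctly: $[E_{km},E_{mn}]=E_{kn}$ for $n\ne k$ handles the triples with distinct outer indices, $[E_{km},E_{mk}]=E_{kk}-E_{mm}$ (with the two terms orthogonal in $\sigma_2(a)$) handles $k=n$, and the identity $[E_{kk},E_{kn}]=E_{kn}$ extracting the uniform lower bound $a_{kk}\ge (C')^{-2}$ is exactly the extra observation needed to close the degenerate triples $m=k$, $m=n$ and $k=m=n$. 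I see no gap; the only caveat is the (reasonable, and clearly intended) reading of ``Hilbert-Lie algebra'' as requiring the bracket to satisfy a norm bound $\Vert[x,y]\Vert\le C'\Vert x\Vert\,\Vert y\Vert$, parallel to the paper's explicit use of $\Vert xy\Vert\le C\Vert x\Vert\,\Vert y\Vert$ in the definition of a Hilbert algebra.
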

We remark also \cite{Kos88} that ${\rm GL}_0(2\infty,{\mathbb R})=\cap_{a\in{\mathfrak A}_{\rm GL}}{\rm
GL}_2(a)$.
\begin{thm}[Theorem 6.1 \cite{Kos88}]
\label{ext-GL-2(a)} Every continuous unitary representation $U$ of the
group ${\rm GL}_0(2\infty,{\mathbb R})$ in a Hilbert space $H$ can be extended by
continuity to a unitary representation $U_2(a):{\rm GL}_2(a)\rightarrow
U(H)$ of some Hilbert-Lie group ${\rm GL}_2(a)$ depending on the
representation.
\end{thm}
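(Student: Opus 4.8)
\emph{Strategy.} I would reduce the statement to the following: \emph{there is a weight $a\in{\mathfrak A}_{\rm GL}$ such that, with respect to the metric of ${\rm GL}_2(a)$ restricted to the dense subgroup $G:={\rm GL}_0(2\infty,{\mathbb R})$, the representation $U$ is strongly continuous.} Granting this, the extension is routine: ${\rm GL}_2(a)$ is a Hilbert--Lie group, $G$ is dense in it (property (a) of ${\rm GL}_2(a)$), and $U$ is unitary-valued, so for each $\xi\in H$ the map $g\mapsto U(g)\xi$ on $G$ is uniformly continuous for the ${\rm GL}_2(a)$-metric (since $\|U(g)\xi-U(g')\xi\|=\|U((g')^{-1}g)\xi-\xi\|$ and $(g')^{-1}g$ is close to $e$), hence extends to a continuous map on all of ${\rm GL}_2(a)$; the extensions for different $\xi$ are compatible by linearity, they are unitary since $U(g)^{*}=U(g^{-1})$ and inversion is continuous, and continuity plus density force the extension $U_{2}(a)\colon{\rm GL}_2(a)\to U(H)$ to be a homomorphism restricting to $U$. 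So the whole problem is to build $a$.

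\emph{Passage to the infinitesimal level.} For every pair $(k,n)$ the one-parameter subgroup $t\mapsto E_{kn}(t)=I+tE_{kn}$ is carried by $U$ to a strongly continuous one-parameter unitary group, hence by Stone's theorem $U(E_{kn}(t))=\exp(itA_{kn})$ with $A_{kn}=A_{kn}^{*}$; the diagonal one-parameter subgroups are treated the same way. Conjugating the elementary identities, for distinct $k,n,m$,
\[
(I+sE_{kn})(I+tE_{nm})(I+sE_{kn})^{-1}=(I+stE_{km})(I+tE_{nm}),
\]
applying $U$ and expanding to second order in $(s,t)$, one obtains the commutation relations mirroring those of the matrix units,
\[
[A_{kn},A_{nm}]=-iA_{km},
\]
the bracket vanishing whenever the indices do not chain. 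These relations are the mechanism that turns the a priori very weak continuity of $U$ into the quadratic, Hilbert--Schmidt-type continuity encoded by ${\rm GL}_2(a)$: they force the ``size'' of $A_{km}$ to be dominated by the product of the ``sizes'' of $A_{kn}$ and $A_{nm}$, which is exactly the defining inequality $a_{kn}\le Ca_{km}a_{mn}$ of ${\mathfrak A}_{\rm GL}$.

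\emph{Construction of the weight and the estimate.} Fix a cyclic unit vector $\xi$ (for a general $H$, decompose into cyclic subspaces and argue below with a total family of cyclic vectors). Using continuity of $U$ at $e$ one gets, for each $\varepsilon>0$, positive numbers $b_{kn}^{(\varepsilon)}$ and $\delta_{\varepsilon}>0$ with $\sum_{k,n}|x_{kn}|^{2}b_{kn}^{(\varepsilon)}<\delta_{\varepsilon}\Rightarrow\|U(I+x)\xi-\xi\|<\varepsilon$, using that every such ellipsoid is a neighbourhood of $e$ and that ${\mathfrak A}_{\rm GL}$ is stable under coordinatewise maxima of finitely many of its members. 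Choosing $a$ to dominate these $b^{(\varepsilon)}$ (and to grow fast enough for the series below to converge), I would then factor $I+x$, for $\|x\|_{{\mathfrak g}l_2(a)}$ small, into elementary factors $I+x'_{kn}E_{kn}$ (a Gauss-type factorisation), telescope, and estimate
\[
\bigl\|(U(I+x)-I)\xi\bigr\|\le\sum_{k,n}|x'_{kn}|\,\|A_{kn}\eta_{kn}\|,
\]
where each $\eta_{kn}$ is a unitary image of $\xi$; since the generator of a conjugated one-parameter subgroup is a finite linear combination of the $A_{pq}$, a Cauchy--Schwarz estimate against the weight $a$ bounds the right-hand side by $C(\xi)\|x\|_{{\mathfrak g}l_2(a)}$. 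Thus $g\mapsto U(g)\xi$ is ${\rm GL}_2(a)$-continuous at $e$, and since conjugation is continuous in the Hilbert--Lie group, so is $g\mapsto U(g)U(h)\xi$ for every $h\in G$. The set $V_{a}$ of vectors $\eta$ at which $g\mapsto U(g)\eta$ is ${\rm GL}_2(a)$-continuous is then a closed subspace, invariant under $U(G)$ and (trivially) under $U(G)'$; hence it contains the cyclic subspace generated by $\xi$, and combining over a total family of cyclic vectors --- again using the max-stability of ${\mathfrak A}_{\rm GL}$ and the commutation relations to keep the weight finite --- gives $V_{a}=H$.

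\emph{Main obstacle.} The hard part is precisely the construction of a single weight valid for the whole representation: the topology of $G$ is so fine that no purely topological argument yields an $\ell^{2}$-weighted neighbourhood basis, and one must use genuinely representation-theoretic input --- positive definiteness of the matrix coefficients and, above all, the commutation relations $[A_{kn},A_{nm}]=-iA_{km}$, which both force $a\in{\mathfrak A}_{\rm GL}$ and control the conjugated generators entering the telescoping bound. A secondary technical point is setting up a common dense domain (smooth or analytic vectors for the finite-dimensional subgroups $G_{n}$) on which the generators, their commutators and the relevant series all make sense; here one exploits that any continuous representation of $G$ restricts to a (hence smooth) representation of each $G_{n}$, cf.\ \cite{Kos88}.
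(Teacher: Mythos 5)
The paper does not actually prove this statement: Theorem~\ref{ext-GL-2(a)} is imported verbatim as Theorem~6.1 of \cite{Kos88}, and no argument for it appears anywhere in the present text. So there is no ``paper's proof'' to compare yours against; I can only assess your outline on its own terms. At the level of strategy it is reasonable and is consistent with what the cited reference does: reduce to strong continuity of $U$ at $e$ in the $\|\cdot\|_{{\mathfrak g}l_2(a)}$-metric for a suitable weight $a$, obtain self-adjoint generators $A_{kn}$ from Stone's theorem, factor a group element into elementary matrices, telescope, and bound $\|(U(I+x)-I)\xi\|$ by a Cauchy--Schwarz estimate against $a$; the extension to ${\rm GL}_2(a)$ by density is then indeed routine.

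However, as written the argument has genuine gaps. First, the sentence deriving the ellipsoid bound from ``continuity of $U$ at $e$'' is logically backwards: the fact that every weighted $\ell^2$-ellipsoid is a neighbourhood of $e$ in the inductive-limit topology does not imply that the neighbourhood $\{g:\|U(g)\xi-\xi\|<\varepsilon\}$ contains such an ellipsoid --- that containment is exactly the theorem, and it must come from the quantitative telescoping estimate you describe afterwards, not from topology. Second, the claim that the commutation relations $[A_{kn},A_{nm}]=-iA_{km}$ ``force'' the submultiplicativity $a_{kn}\le Ca_{km}a_{mn}$ is not substantiated: the relation only gives $\|A_{km}\xi\|\le\|A_{kn}A_{nm}\xi\|+\|A_{nm}A_{kn}\xi\|$, which involves second-order quantities and does not bound $\|A_{km}\xi\|$ by the product $\|A_{kn}\xi\|\,\|A_{nm}\xi\|$; in fact submultiplicativity is cheaper than you suggest (any positive array $b_{kn}$ is dominated by a product weight $c_kc_n$ with $c_k\ge1$, which lies in ${\mathfrak A}_{\rm GL}$ with $C=1$), so the real work lies elsewhere. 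Third, the ``secondary technical point'' you defer is actually central: your telescoping bound needs a single total family of vectors lying in $\bigcap_{k,n}D(A_{kn})$ with $\sum_{k,n}\|A_{kn}\xi\|^2a_{kn}^{-1}<\infty$ for one fixed $a$, and a countable intersection of dense domains (or of the smooth-vector spaces $H^\infty(G_n)$, which decrease in $n$) is not automatically dense for an inductive-limit group. Until the existence of such a common domain and a single representation-wide weight is established, the proof is a programme rather than an argument.
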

\subsection{Hilbert-Lie groups $B_2(a)$}
\label{Hilb-Lie-B_2(a)}
Let us consider the following Hilbert-Lie group $B_2(a):=
B_2^{\mathbb Z}(a)$
\begin{equation}
\label{B_2(a),Z} B_2(a)=\{I+x\,\mid\, x\in {\mathfrak b}_2(a)\},
\end{equation}
where the corresponding Hilbert-Lie algebra ${\mathfrak
b}_2(a):={\mathfrak b}^{\mathbb Z}_2(a)$ is defined as
\begin{equation}
\label{b_2(a),Z} {\mathfrak b}_2(a)=\{ x=\sum_{(k,n)\in {\mathbb Z}^2,k<n}x_{kn}E_{kn}\,\,
\vert \,\,\Vert x\Vert^2_{{\mathfrak
b}_2(a)}=\sum_{(k,n)\in {\mathbb Z}^2,k<n}\mid x_{kn}\mid^2a_{kn}<\infty \}.
\end{equation}
\begin{lem}[\cite{Kos88}] The Hilbert space ${\mathfrak b}_2(a)$ (with an operation $(x,y)\mapsto xy$) is a Banach
algebra if and only if the weight $a=(a_{kn})_{(k,n)\in {\mathbb
Z}^2,k<n}$ satisfies the conditions
\begin{equation}
\label{b_2(a)-alg,Z} a=(a_{kn})_{k<n},\,\,a_{kn}\leq
Ca_{km}a_{mn},\,\,k<m<n,\,\,k,m,n\in{\mathbb Z}.
\end{equation}
\end{lem}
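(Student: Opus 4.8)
The plan is to reduce both directions of the equivalence to the classical submultiplicativity of the Hilbert--Schmidt norm on $\ell^2(\mathbb Z)$, exactly as in the proof of the analogous statement for $\sigma_2(a)$. First I would record the shape of the product: for $x=\sum_{k<n}x_{kn}E_{kn}$ and $y=\sum_{k<n}y_{kn}E_{kn}$ in ${\mathfrak b}_2(a)$ the matrix $xy$ is again strictly upper triangular, with
$$
(xy)_{kn}=\sum_{k<m<n}x_{km}y_{mn},
$$
and for each fixed pair $k<n$ this is a finite sum (over the $n-k-1$ admissible $m$), so the multiplication is entrywise well defined even though the index set is all of $\mathbb Z\times\mathbb Z$.

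For \emph{sufficiency}, assuming $a_{kn}\le Ca_{km}a_{mn}$ for $k<m<n$, I would use that for every $m$ occurring in the sum one has $\sqrt{a_{kn}}\le\sqrt C\,\sqrt{a_{km}}\sqrt{a_{mn}}$, which gives the pointwise domination
$$
\sqrt{a_{kn}}\,|(xy)_{kn}|\le\sqrt C\sum_{k<m<n}\bigl(|x_{km}|\sqrt{a_{km}}\bigr)\bigl(|y_{mn}|\sqrt{a_{mn}}\bigr)=\sqrt C\,(\tilde X\tilde Y)_{kn},
$$
where $\tilde X$ and $\tilde Y$ are the non-negative matrices on $\ell^2(\mathbb Z)$ with entries $\tilde x_{km}=|x_{km}|\sqrt{a_{km}}$ for $k<m$ (and $0$ otherwise) and $\tilde y_{mn}=|y_{mn}|\sqrt{a_{mn}}$ for $m<n$; note $\Vert\tilde X\Vert_{\rm HS}^2=\Vert x\Vert^2_{{\mathfrak b}_2(a)}$ and $\Vert\tilde Y\Vert_{\rm HS}^2=\Vert y\Vert^2_{{\mathfrak b}_2(a)}$, so both are Hilbert--Schmidt, in particular bounded. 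Squaring and summing then yields
$$
\Vert xy\Vert^2_{{\mathfrak b}_2(a)}=\sum_{k<n}a_{kn}|(xy)_{kn}|^2\le C\,\Vert\tilde X\tilde Y\Vert^2_{\rm HS}\le C\,\Vert\tilde X\Vert^2_{\rm op}\Vert\tilde Y\Vert^2_{\rm HS}\le C\,\Vert x\Vert^2_{{\mathfrak b}_2(a)}\Vert y\Vert^2_{{\mathfrak b}_2(a)},
$$
using $\Vert AB\Vert_{\rm HS}\le\Vert A\Vert_{\rm op}\Vert B\Vert_{\rm HS}$ and $\Vert A\Vert_{\rm op}\le\Vert A\Vert_{\rm HS}$. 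Hence $\Vert\cdot\Vert_{{\mathfrak b}_2(a)}$ is submultiplicative up to the constant $\sqrt C$; passing to the equivalent norm $\sqrt C\,\Vert\cdot\Vert_{{\mathfrak b}_2(a)}$ makes it genuinely submultiplicative, and since ${\mathfrak b}_2(a)$ is a Hilbert space it is complete, so it is a Banach algebra.

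For \emph{necessity} I would simply test the algebra inequality on matrix units. Suppose $\Vert xy\Vert_{{\mathfrak b}_2(a)}\le C_0\Vert x\Vert_{{\mathfrak b}_2(a)}\Vert y\Vert_{{\mathfrak b}_2(a)}$ for all $x,y$. Fix $k<m<n$ and take $x=E_{km}$, $y=E_{mn}$; then $xy=E_{kn}$ and $\Vert E_{pq}\Vert^2_{{\mathfrak b}_2(a)}=a_{pq}$, so the inequality reads $\sqrt{a_{kn}}\le C_0\sqrt{a_{km}}\sqrt{a_{mn}}$, i.e. $a_{kn}\le C_0^2\,a_{km}a_{mn}$, which is precisely (\ref{b_2(a)-alg,Z}) with $C=C_0^2$.

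I do not anticipate a serious obstacle. The only points needing care are bookkeeping ones: that the index set is two-sided, so one checks that each product entry is a finite sum and that $\tilde X,\tilde Y$ are honest bounded operators on $\ell^2(\mathbb Z)$; and that ``Banach algebra'' is understood up to the harmless rescaling of the norm by $\sqrt C$. The one substantive step is the domination $\sqrt{a_{kn}}\,|(xy)_{kn}|\le\sqrt C\,(\tilde X\tilde Y)_{kn}$, which transports the problem to the Hilbert--Schmidt setting; condition (\ref{b_2(a)-alg,Z}) is exactly what makes this domination available, and the matrix-unit test shows it is also exactly what is needed.
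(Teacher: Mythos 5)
Your proof is correct. The paper itself gives no argument for this lemma --- it is quoted from \cite{Kos88} --- so there is nothing in the text to compare against; your two steps (the pointwise domination $\sqrt{a_{kn}}\,|(xy)_{kn}|\le\sqrt C\,(\tilde X\tilde Y)_{kn}$ reducing sufficiency to $\Vert AB\Vert_{\rm HS}\le\Vert A\Vert_{\rm op}\Vert B\Vert_{\rm HS}\le\Vert A\Vert_{\rm HS}\Vert B\Vert_{\rm HS}$, and the matrix-unit test $E_{km}E_{mn}=E_{kn}$ for necessity) are exactly the standard argument, and your reading of ``Banach algebra'' as $\Vert xy\Vert\le C\Vert x\Vert\,\Vert y\Vert$ up to rescaling is consistent with the paper's own parenthetical in the companion lemma for $\sigma_2(a)$.
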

Denote by  ${\mathfrak A}$ the set of all  weight $a$ satisfying the
mentioned condition.

\subsection{Orbit method for infinite-dimensional ``nilpotent'' group  $B_0^{\mathbb Z}$, first steps}
\label{orb-meth,1-step}
Take the group $B_0^{\mathbb Z}$, fix some its Hilbert completion
i.e. a Hilbert-Lie group $B_2(a),\,\,a\in {\mathfrak A}$ and the
corresponding Hilbert-Lie algebra ${\mathfrak g}={\mathfrak
b}_2(a)$. The corresponding dual space ${\mathfrak g}^*={\mathfrak b}_2^*(a)$ has the
form
\begin{equation}
\label{b*_2(a),Z}
{\mathfrak b}_2^*(a)=\{ y=\sum_{(k,n)\in {\mathbb Z}^2,k>n}y_{kn}E_{kn}
\,\,\vert\,\, \Vert y\Vert^2_{{\mathfrak b}_2^*(a)}=\sum_{(k,n)\in {\mathbb Z}^2,k>n}\mid y_{kn}\mid^2a_{kn}^{-1}<\infty \}.
\end{equation}
The {\it adjoint action} $B_2(a)\rightarrow {\rm Aut}({\mathfrak
b}_2(a))$ of the group $B_2(a)$ on its Lie algebra ${\mathfrak
b}_2(a)$ is:
\begin{equation}
\label{Ad(3),Z} {\mathfrak b}_2(a)\ni x \mapsto {\rm
Ad}_t(x):=txt^{-1}\in {\mathfrak b}_2(a),\quad t\in B_2(a).
\end{equation}
The {\it pairing} between ${\mathfrak g}={\mathfrak b}_2(a)$ and
${\mathfrak g}^*={\mathfrak b}_2^*(a)$ is correctly defined by the
trace:
\begin{equation}
\label{<,>_2(a),Z} {\mathfrak g}^*\times {\mathfrak g}\ni
(y,x)\mapsto\langle y,x\rangle :=tr(xy)=\sum_{(k,n)\in {\mathbb Z}^2,k<n}x_{kn}y_{nk}\in{\mathbb R}.
\end{equation}
The {\it coadjoint action} of the group $B_2(a)$ on the dual
${\mathfrak g}^*={\mathfrak b}^*_2(a)$ to ${\mathfrak g}={\mathfrak
b}_2(a)$ is as follows: for $t\in B_2(x)$ and $y\in {\mathfrak
b}_2^*(a)$
$$
t=I\!+\!\!\sum_{(k,n)\in {\mathbb Z}^2,k<n}t_{kn}E_{kn},\,\,
y=\sum_{(k,n)\in {\mathbb Z}^2,k>n}y_{kn}E_{kn},\,\,t^{-1}:=
I\!+\!\!\sum_{(k,n)\in {\mathbb Z}^2,k<n}t_{kn}^{-1}E_{kn}
$$
we have
$$
(t^{-1}yt)_{pq}=\sum_{m=-\infty}^q(t^{-1}y)_{pm}t_{mq}=\sum_{m=-\infty}^q\sum_{r=p}^\infty
t^{-1}_{pr}y_{rm}t_{mq},\,\, (p,q)\in {\mathbb Z}^2,p>q,
$$
hence
\begin{equation} \label{Ad*(n),Z} {\rm
Ad}^*_x(y)=(t^{-1}yt)_-:=I+\sum_{(p,q)\in {\mathbb
Z}^2,p>q}(t^{-1}yt)_{pq}E_{pq}.
\end{equation}
We  consider four different type of orbits with respect to the
coadjoint action of the group $B_2(a)$ in the dual space
${\mathfrak b}_2^*(a)$.

{\it Case 1) The finite-dimensional orbits} corresponding to a {\it
finite points} $y=$\\$\sum_{(k,n)\in{\mathbb Z},k>n}y_{kn}E_{kn}\in
{\mathfrak b}_2^*(a)$ (finiteness of $y$ means that only finite
number of $y_{kn}$ are nonzero). This orbits leads to the induced
representations of an appropriate finite-dimensional groups
$G_n^{m},\,\,m\in{\mathbb Z},\,\,n\in {\mathbb N}$  defined by
(\ref{G^m_n=}). All irreducible unitary representations  of the
groups $G_n^{m}$ are completely described by the Kirillov orbit
method hence the finite-dimensional orbits gives us the set $\bigcup_{n\in{\mathbb
N}}\widehat{G^{m}_n}\subset\widehat{B_0^{\mathbb Z}} $ (see
subsection~\ref{dual-astep}, Remark~\ref{orbit-fin} for embedding
$\widehat{G^{m}_n}\subset\widehat{G^{m}_{n+1}}$).

{\it Case 2) $0$-dimensional orbits} are of the form:
$$
{\mathcal O}_0=y,\,\, y\in {\mathfrak b}_2^*(a),\quad
y=\sum_{k\in{\mathbb Z}} y_{k+1,k}E_{k+1,k}.
$$
The Lie algebra ${\mathfrak b}_2(a)$ is subordinate to the
functional $y$,  $\langle y,[{\mathfrak b}_2(a),{\mathfrak b}_2(a)]\rangle=0$ since
$$
[{\mathfrak b}_2(a),{\mathfrak b}_2(a)]=\{x\in {\mathfrak b}_2(a)\mid x=\sum_{(k,n)\in {\mathbb Z}^2,k+1<n}x_{kn}E_{kn}\}.
$$
 The one-dimensional representation of
the Lie algebra ${\mathfrak b}_2(a)$ are
$$
{\mathfrak b}_2(a)\ni x\mapsto \langle y,x\rangle=\sum_{k\in{\mathbb Z}} x_{k,k+1}y_{k+1,k}\in{\mathbb R}.
$$
Corresponding one-dimensional representations of the group $B_2(a)$
are as follows:
\begin{equation}
\label{1-dim,rep,Z} B_2(a)\ni \exp(x)\mapsto \exp (2\pi i(\langle
y,x\rangle))= \exp (2\pi i\sum_{k\in{\mathbb Z}}
x_{k,k+1}y_{k+1,k})\in S^1.
\end{equation}
They are all {\it irreducible and nonequivalent} for different $y=\sum_{k\in{\mathbb Z}} y_{k+1,k}E_{k+1,k}\in {\mathfrak b}_2^*(a).$

{\it Case 3) Generic orbit} is generated for an arbitrary $m\in
{\mathbb Z}$ by a point $y\in{\mathfrak b}_2^*(a)$
\begin{equation}
\label{y-in-g*} y\!=\!\sum_{p=0}^\infty
y_{m+p+1,m-p}E_{m+p+1,m-p}\in {\mathfrak b}_2^*(a),\,\,\text{
with}\,\,y_{m+p+1,m-p}\not=0,\,\,p+1\in{\mathbb N}.
\end{equation}
Sections~\ref{gen-orb,Z} and \ref{gen-orb,Z,irr} are devoted to the
study of this case.

{\it Case 4) General orbits} generated by an arbitrary non finite points
$$y=\sum_{(k,n)\in{\mathbb Z},k>n}y_{kn}E_{kn}\in{\mathfrak b}_2^*(a).$$
{\bf Problem}. How to construct the induced representations for  general orbits and
study their irreducibility?

\subsection{Construction of the induced representations of the group $B_0^{\mathbb Z}$ corresponding to a generic orbits}
\label{gen-orb,Z}
Consider more carefully the case 3). The irreducibility we shall study in the following subsection.
Take as before the group $B_0^{\mathbb Z}$, fix  some its Hilbert completion
i.e. a Hilbert-Lie group $B_2(a),\,\,a\in {\mathfrak A}$, the
corresponding Hilbert-Lie algebra ${\mathfrak g}={\mathfrak
b}_2(a)$ and its dual ${\mathfrak g}^*={\mathfrak
b}_2^*(a)$ as in  the previous subsection.

We shall write the analog of the induced representation of the group
$B_0^{\mathbb Z}$ for generic orbits (see Examples
\ref{Gen-orbit.n}, \ref{B_5-orb} and \ref{B_6-orb}) corresponding to
the point  $y\in{\mathfrak b}_2^*(a)$ defined by (\ref{y-in-g*})
following steps 1)--3) of Definition \ref{df.ind-inf-dim}.

Step 1) {\it Extension of the representation} $S:H\rightarrow U(V)$.
For fixed $m\in{\mathbb Z}$, consider the decomposition
$$B^{\mathbb Z}=B_mB(m)B^{(m)}$$
 similar to the decomposition (\ref{2decompos}),
where $B^{\mathbb Z}=\{I+\sum_{k,n\in{\mathbb
Z},\,k<n}x_{kn}E_{kn}\}$,
$$
B_m=\{I+\sum_{(k,r)\in\Delta_{m}}x_{kr}E_{kr}\},\,\,
B(m)=\{I+\sum_{(k,r)\in\Delta(m)}x_{kr}E_{kr}\},\,\,
B^{(m)}=\{I+\sum_{(k,r)\in\Delta^{(m)}}x_{kr}E_{kr}\},
$$
$$\Delta_{m}=\{(k,r)\in{\mathbb Z}^2\mid m+1\leq k<r\},\quad
\Delta(m)=\{(k,r)\in{\mathbb Z}^2\mid k\leq m<r\},\,\,$$
and $\Delta^{(m)}=\{(k,r)\in{\mathbb Z}^2\mid k<r\leq m\}.$\\
Since the algebras ${\mathfrak h}_{0}(m),\,\,m\in {\mathbb Z}$
defined as follows ${\mathfrak h}_{0}(m)=\{t-I\mid t\in B_0(m)\}$,
where $B_0(m)=B(m)\cap B_0^{\mathbb Z}$, are commutative, so
$\langle y,[{\mathfrak h}_{0}(m),{\mathfrak h}_{0}(m)]\rangle=0$,
hence they are subordinate to the functional $y\in {\mathfrak
g}^*={\mathfrak b}_2^*(a)$. The corresponding one-dimensional
representation of the algebra ${\mathfrak h}_{0}(m)={\mathfrak
h}(m)\bigcap {\mathfrak g}_0^{\mathbb Z}$ is
$$
{\mathfrak h}_{0}(m)\ni x\mapsto \langle y,x\rangle=\sum_{p=0}^\infty x_{m-p,m+p+1}y_{m+p+1,m-p}\in {\mathbb R}.
$$
The unitary representation of the corresponding group  $H_0(m)$ is
$$
H_0(m)\ni \exp(x)\mapsto S(\exp(x))=\exp(2\pi i\langle y,x\rangle)\in S^1.
$$
This representation can be extended to representation of the
corresponding Hilbert-Lie group ${\tilde H}=H_{2}(m,a)=B(m)\bigcap
B_2(a)$ (we note that $t=\exp(t-1)$):
$$
H_{2}(m,a)\ni \exp(x)\mapsto S(\exp(x))=\exp(2\pi i\langle y,x\rangle)\in S^1.
$$
In what follows we shall use a notation $B_{2}(m,a)$ for the group
$H_{2}(m,a)$.

Step 2 a) {\it Construction of the completion} $\tilde X=\tilde
H\backslash \tilde G$ of the space $X=H\backslash G$. It is
difficult to construct an appropriate measure on the space $X_{m,0}=B_0(m)\backslash B_0^{\mathbb Z}$ since it is
isomorphic to the space ${\mathbb R}_0^\infty\subset {\mathbb R}_0^\infty$. That is why we  consider two
homogeneous spaces, an appropriate completions of the space $X_{m,0}$:
$$
X_{m,2}(a)=B_{m,2}(a)\backslash B_2(a),\quad X_{m}=B(m) \backslash
B^{\mathbb Z}.
$$
Since the decompositions holds
$$
B_0^{\mathbb Z}=B_{m,0}B_0(m)B_0^{(m)},\quad
B_2(a)=B_{m,2}(a)B_{2}(m,a) B^{(m)}_2(a),\quad
B^{\mathbb Z}=B_mB(m)B^{(m)},
$$
(see Remark \ref{9.B(n,R)=LAU}), we have the following
inclusions: $ X_{m,0}\subset X_{m,2}(a)\subset X_{m}$, where
$$
X_{m,0}\simeq B_{m,0}\times B^{(m)}_0,\quad
X_{m,2}(a)\simeq B_{m,2}(a)\times B^{(m)}_2(a),\quad
X_{m}=B(m)\backslash B^{\mathbb Z}\simeq B_m\times B^{(m)}.
$$
Step 2 b) We {\it construct a measure} $\mu_b$ on the space $X_{m}$
with support $X_{m,2}(a)$ i.e. such that
$\mu_b(X_{m,2}(a))=1$.
 That is we take $\tilde X=\tilde H\backslash \tilde G=B_{2}(m,a)\backslash B_2(a)$.
\begin{rem}
\label{mes-on-X} On the space $X_m$ we can take any
$B_0^{\mathbb Z}$-quasi-invariant ergodic measure, construct the
induced representation and study the irreducibility. We consider the simplest case of the Gaussian
measure, the infinite product of one-dimensional Gaussian measure.
\end{rem}
We construct the measure $\mu_b$ on the space $X_{m}\simeq B_m\times B^{(m)}$ as a product-measure $\mu_b=\mu_{b,m}\otimes \mu_b^{(m)}$, where
$\mu_{b,m}$ (resp. $\otimes \mu_b^{(m)}$) is Gaussian product measure on the group $B_m$ (resp. $B^{(m)}$) defined as follows:
\begin{eqnarray}
\label{Gauss-on-G^m_n} \,\,\,d\mu_{b,m}(x_m)\!=\!\otimes_{(k,n)\in
\Delta_{m}}d\mu_{b_{kn}}(x_{kn})\!=\!\otimes_{(k,n)\in
\Delta_{m}}\sqrt{\frac{b_{kn}}{\pi}}
\exp(-b_{kn}x_{kn}^2)dx_{kn},\\
\,\,\,\,\,\,d\mu_b^{(m)}(x^{(m)})\!=\!\otimes_{(k,n)\in
\Delta^{(m)}}d\mu_{b_{kn}}(x_{kn})\!=\!\otimes_{(k,n)\in
\Delta^{(m)}}\sqrt{\frac{b_{kn}}{\pi}} \exp(-b_{kn}x_{kn}^2)dx_{kn}.
\end{eqnarray}
The corresponding Hilbert space is
$${\mathcal H}^m=L^2(X_m,\mu_b)=L^2(B_m\times
B^{(m)},\mu_{b,m}\otimes \mu_b^{(m)}).
$$
\begin{lem}
[Kolmogorov's zero-one law, \cite{ShFDT67}]
We have $\mu_{b,m}\otimes \mu_b^{(m)}(B_{m,2}(a)\times B^{(m)}_2(a))=1$ if and only if
$$\sum_{(k,n)\in \Delta(m)\cup \Delta^{(m)}}
\frac{a_{kn}}{b_{kn}}<\infty.$$
\end{lem}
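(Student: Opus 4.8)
The plan is to identify $B_{m,2}(a)\times B^{(m)}_2(a)$ with a tail event of the product measure $\mu:=\mu_{b,m}\otimes\mu_b^{(m)}$ and then to resolve the resulting zero--one dichotomy by a single Laplace-transform computation. Under the identification $X_m\simeq B_m\times B^{(m)}$, an element with matrix entries $(x_{kn})$ lies in $B_{m,2}(a)\times B^{(m)}_2(a)=\bigl(B_m\cap B_2(a)\bigr)\times\bigl(B^{(m)}\cap B_2(a)\bigr)$ if and only if $\sum_{(k,n)}a_{kn}x_{kn}^2<\infty$, where the sum runs over the coordinates of $\mu$, i.e.\ over $\Delta_m\cup\Delta^{(m)}$; here one uses that ${\mathfrak b}_2(a)$ is a Banach algebra for $a\in{\mathfrak A}$, so that $I+x\in B_2(a)$ is equivalent to $x\in{\mathfrak b}_2(a)$ (the inverse of $I+x$ being then automatically in ${\mathfrak b}_2(a)$). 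The set $\{(x_{kn})\colon\sum a_{kn}x_{kn}^2<\infty\}$ is Borel and is not affected by changing finitely many coordinates, hence is a tail event; since $\mu$ is a countable product of probability measures, Kolmogorov's zero--one law gives $\mu\bigl(B_{m,2}(a)\times B^{(m)}_2(a)\bigr)\in\{0,1\}$.

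To decide which value occurs, I would compute, for a fixed $t>0$, the Laplace transform
\begin{equation*}
\phi(t)=\int\exp\Bigl(-t\sum_{(k,n)}a_{kn}x_{kn}^2\Bigr)\,d\mu(x).
\end{equation*}
Under $\mu$ the coordinates $x_{kn}$ are independent with density $\sqrt{b_{kn}/\pi}\,\exp(-b_{kn}x_{kn}^2)$, so a one-variable Gaussian integral gives $\int\exp(-ta_{kn}x_{kn}^2)\,d\mu_{b_{kn}}(x_{kn})=(1+ta_{kn}/b_{kn})^{-1/2}$. Applying this to finite partial sums and letting those sums increase, dominated convergence (with dominating function $1$) together with independence yields
\begin{equation*}
\phi(t)=\prod_{(k,n)}\Bigl(1+t\,\frac{a_{kn}}{b_{kn}}\Bigr)^{-1/2}.
\end{equation*}
Since $t>0$ is fixed and $\log(1+ta_{kn}/b_{kn})$ is comparable to $ta_{kn}/b_{kn}$ as the latter tends to $0$, this infinite product is strictly positive if and only if $\sum_{(k,n)}a_{kn}/b_{kn}<\infty$.

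The conclusion then follows by reading off $\phi$. If $\sum a_{kn}/b_{kn}=\infty$, then $\phi(t)=0$ for every $t>0$; since $e^{-tS}\ge 0$ this forces $S:=\sum a_{kn}x_{kn}^2=\infty$ for $\mu$-a.e.\ $x$, so $\mu\bigl(B_{m,2}(a)\times B^{(m)}_2(a)\bigr)=0$. If $\sum a_{kn}/b_{kn}<\infty$, then $\phi(t)>0$, so $S<\infty$ on a set of positive $\mu$-measure, and the zero--one law upgrades this to full measure; alternatively, in this case Tonelli's theorem already gives $\int S\,d\mu=\tfrac12\sum a_{kn}/b_{kn}<\infty$. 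The only point beyond routine measure theory is the identification in the first paragraph of membership in the Hilbert--Lie group $B_2(a)$ with the weighted $\ell^2$-condition on the entries — in particular the fact that the inverse of $I+x$ inherits $\sigma_2(a)$-membership — which relies on the Banach-algebra property recalled above; I expect this bookkeeping, rather than the probabilistic core, to require the most care.
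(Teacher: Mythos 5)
Your proof is correct and is exactly the argument the paper intends: the lemma is stated there without proof, carrying only the attribution to Kolmogorov's zero--one law, and your combination of the tail-event observation with the one-variable Gaussian computation $\int\exp(-ta_{kn}x_{kn}^2)\,d\mu_{b_{kn}}(x_{kn})=(1+ta_{kn}/b_{kn})^{-1/2}$ (plus the positivity criterion for the resulting infinite product, or the even quicker $\int S\,d\mu=\tfrac12\sum a_{kn}/b_{kn}$ in the convergent case) is the standard way to fill it in. You also correctly identify the relevant index set as $\Delta_m\cup\Delta^{(m)}$, the coordinates on which $\mu_{b,m}\otimes\mu_b^{(m)}$ actually lives; the $\Delta(m)$ appearing in the paper's statement is evidently a misprint.
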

\begin{lem}[\cite{Kos90,Kos92}]
\label{mu-b-quasi-inv} The measure $\mu_b=\mu_{b,m}\otimes
\mu_b^{(m)}$ is $B_{m,0}\times B^{(m)}_0$-right-quasi-invariant
i.e. $(\mu_b)^{R_t}\sim \mu_b$ for all $t\in B_{m,0}\times B^{(m)}_0$
if and only if
$$
S^{R}_{kn}(\mu_{b})=\sum_{r=-\infty}^{k-1}\frac{b_{rn}}{b_{rk}}<\infty,
\quad\text{for all},\,\,k<n\leq m.
$$
\end{lem}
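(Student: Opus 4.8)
The plan is to exploit the product structure of both the group action and the measure, reduce to the one-parameter subgroups, and run a Kakutani-type dichotomy for the resulting infinite products of Gaussians. Under the identification $X_m\simeq B_m\times B^{(m)}$ coming from $B^{\mathbb Z}=B_mB(m)B^{(m)}$, the off-diagonal entries of elements of $B_m$ lie strictly above the level $m$ and those of $B^{(m)}$ strictly below it, so elements of $B_m$ and $B^{(m)}$ commute. Consequently, for $t\in B_{m,0}$ one gets $h(x,t)=e$ and the right action of $B_{m,0}$ on $X_m$ is just right translation in the $B_m$-factor, while $B^{(m)}_0$ acts by right translation in the $B^{(m)}$-factor. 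Since the set $\{t:(\mu_b)^{R_t}\sim\mu_b\}$ is a subgroup, since $B_{m,0}\times B^{(m)}_0$ is generated by the one-parameter subgroups $E_{pq}(s)=I+sE_{pq}$, and since a product measure $\mu_{b,m}\otimes\mu_b^{(m)}$ is equivalent to its image under a product transformation if and only if each factor is, it suffices to decide, for each generator $E_{pq}(s)$ with $s\ne0$, whether $\mu_b$ is quasi-invariant under right translation by it.

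Next I would compute that action: right multiplication by $I+sE_{pq}$ changes only the $q$-th column, via $x_{pq}\mapsto x_{pq}+s$ and $x_{iq}\mapsto x_{iq}+s\,x_{ip}$ for $i<p$, leaving all other coordinates fixed. Since distinct $i$ involve disjoint coordinates, this transformation factors as an independent product of planar shears $(x_{ip},x_{iq})\mapsto(x_{ip},x_{iq}+s\,x_{ip})$ over $i<p$ together with one deterministic translation in $x_{pq}$; each shear has Jacobian $1$, so
$$
\frac{d\mu_b(xt)}{d\mu_b(x)}=\exp\bigl(-2b_{pq}s\,x_{pq}-b_{pq}s^2\bigr)\prod_{i<p}\exp\bigl(-2b_{iq}s\,x_{ip}x_{iq}-b_{iq}s^2x_{ip}^2\bigr).
$$
The essential point, and the place where the real content lies, is that the shift of $x_{iq}$ is \emph{random} (it equals $s\,x_{ip}$, not a constant), so the convergence of this product is not governed by a sum of squared fixed shifts but must be analysed as a genuine two-dimensional Hellinger computation on each pair $(x_{ip},x_{iq})$.

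Then I would invoke Kakutani's dichotomy for infinite products of pairwise equivalent probability measures: $(\mu_b)^{R_{E_{pq}(s)}}$ is either equivalent to $\mu_b$ or orthogonal to it, and it is equivalent exactly when the product over the blocks of their Hellinger affinities is positive. Integrating the square root of the $i$-th factor first over $x_{iq}$ (a centred Gaussian of variance $(2b_{iq})^{-1}$) and then over $x_{ip}$ (variance $(2b_{ip})^{-1}$) gives the affinity $\bigl(1+b_{iq}s^2/(4b_{ip})\bigr)^{-1/2}$, while the $x_{pq}$-block contributes the fixed positive factor $\exp(-b_{pq}s^2/4)$. Hence for $s\ne0$ the measure $(\mu_b)^{R_{E_{pq}(s)}}$ is equivalent to $\mu_b$ if and only if $\sum_{i<p}b_{iq}/b_{ip}<\infty$, and orthogonal to it otherwise.

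Finally I would collect the conditions over all generators. For $(p,q)\in\Delta_m$ (so $m<p<q$) the index $i$ in the sum runs over the finite set $\{m+1,\dots,p-1\}$, so the sum is automatically finite, $\mu_{b,m}$ is always $B_{m,0}$-right-quasi-invariant, and no constraint on the $B_m$-part arises; for $(p,q)=(k,n)\in\Delta^{(m)}$ with $k<n\le m$ the sum is $\sum_{i<k}b_{in}/b_{ik}=\sum_{r=-\infty}^{k-1}b_{rn}/b_{rk}=S^R_{kn}(\mu_b)$, yielding exactly the stated criterion, the ``only if'' direction being the orthogonality alternative of Kakutani's theorem. Everything beyond this is bookkeeping with the decomposition $B^{\mathbb Z}=B_mB(m)B^{(m)}$ and elementary Gaussian integrals (cf.\ \cite{Kos90,Kos92}); the main obstacle is genuinely the second step, namely dealing with the randomness of the coordinate shifts, which is precisely what converts the naive square-summability of shifts into the asymmetric ratio condition $\sum_{r<k}b_{rn}/b_{rk}<\infty$.
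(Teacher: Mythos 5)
Your proof is correct: the reduction to the elementary one-parameter subgroups $E_{pq}(s)$, the decomposition of the right translation into independent two-coordinate shear blocks $(x_{ip},x_{iq})\mapsto(x_{ip},x_{iq}+s\,x_{ip})$, and the Kakutani/Hellinger computation giving the block affinity $\bigl(1+b_{iq}s^2/(4b_{ip})\bigr)^{-1/2}$ all check out, with the bookkeeping correctly showing that the $B_m$-factor imposes no condition (finite sums) while the $B^{(m)}$-factor yields exactly $S^{R}_{kn}(\mu_b)<\infty$ for $k<n\le m$. The paper states this lemma without proof, citing \cite{Kos90,Kos92}, and your argument is essentially the standard Kakutani-dichotomy proof used in those references.
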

Step 3) The corresponding induced representation of the group $B_0^{\mathbb Z}$ we defined as
follows:
\begin{equation}
\label{Ind(B^Z_0)}
(T_t^{m,y}f)(x)=S(h(x,t))\left(\frac{d\mu_b(xt)}{d\mu_b(x)}\right)^{1/2}f(xt),\,\,x\in
X_m,\,\,t\in G,
\end{equation}
where (see (\ref{9.S(h)=exp(2pi tr(B)),Z}))
$$S(h(x,t))=\exp(2\pi i\langle y,h(x,t)-1\rangle)=
\exp\Big(2\pi i{\rm tr}\left((t-I)B(x,y)\right) \Big).
$$
\subsection{Irreducibility of the induced representations of the group $B_0^{\mathbb Z}$
corresponding to a generic orbits} \label{gen-orb,Z,irr}

Consider the induced representation $T^{m,y}$ of the group $B_0^{\mathbb Z}$
corresponding to a generic orbit ${\mathcal O}_y$, generated by the
point \\$y=\sum_{r=0}^\infty y_{m+r+1,m-r}E_{m+r+1,m-r}\in
{\mathfrak b}_2^*(a)$  defined  by (\ref{Ind(B^Z_0)}).
Set for  $(k,r)\in \Delta(m)$
\begin{equation}
\label{S_kr(t),Z}
 S_{kr}(t_{kr}):=\langle
y,(h(x,E_{kr}(t_{kr}))-I)\rangle,\quad\text{then}\quad
A_{kr}=\frac{d}{dt}\exp(2\pi i S_{kr}(t))\vert_{t=0}=2\pi i S_{kr}(1).
\end{equation}
Let us denote by ${\mathbb S}^{(m)}={\mathbb S}$ the following
matrix (compare with (\ref{S_kr(t)}) and (\ref{def:S=(S_kr)})):
\begin{equation}
\label{def:S=(S_kr),Z}
{\mathbb S}=(S_{kr})_{(k,r)\in \Delta(m)},\quad \text{where}\quad S_{kr}=S_{kr}(1).
\end{equation}

We calculate now the matrix  ${\mathbb S}(t)=(S_{kr}(t_{kr}))_{(k,r)\in \Delta(m)}$ and the matrix ${\mathbb S}=$\\
$(S_{kr}(1))_{(k,r)\in \Delta(m)}$ using analog of the Lemma~\ref{l.tr(E_knB)=B^t}. As in  (\ref{H(x,t)}) we have
$$
\langle y,h(x,t)-I\rangle={\rm tr}\left(H(x,t)y\right)={\rm tr}\left(x^{(m)}t_0x_m^{-1}y\right)=
{\rm tr}\left(t_0x_m^{-1}yx^{(m)}\right)={\rm tr}\left(t_0B(x,y)\right),
$$
where $t_0=t-I$ and  for $x_m\in B_m,\,\,x^{(m)}\in B^{(m)}$ we denote
\begin{equation}
\label{B(x,y),Z}
B(x,y)=x_m^{-1}yx^{(m)}\cong
\left(\begin{smallmatrix}
1&0\\
0&x_m^{-1}\\
\end{smallmatrix}\right)
\left(\begin{smallmatrix}
0&0\\
y&0\\
\end{smallmatrix}\right)
\left(\begin{smallmatrix}
x^{(m)}&0\\
0&1\\
\end{smallmatrix}\right)=
\left(\begin{smallmatrix}
0&0\\
x_m^{-1}yx^{(m)}&0\\
\end{smallmatrix}\right).
\end{equation}
By definition we have (recall that $E_{kn}(t_{kn})=I+t_{kn}E_{kn}$)
$$
S_{kn}(t_{kn})=\langle y,(h(x,E_{kn}(t_{kn}))-I)\rangle={\rm tr}(t_{kn}E_{kn}B(x,y)),
$$
hence by analog of the Lemma~\ref{l.tr(E_knB)=B^t} we conclude that
\begin{equation}
\label{S=B^T,Z}
{\mathbb S}=(S_{kn}(1))_{k,r}=\left({\rm tr}\left(E_{kr}B(x,y)\right)\right)_{k,r}=B^T(x,y)=
(x^{(m)})^Ty^T(x_m^{-1})^T=
\left(\begin{smallmatrix}
0&(x^{(m)})^Ty^T(x_m^{-1})^T\\
0&0\\
\end{smallmatrix}\right).
\end{equation}
So, we have
\begin{equation}
\label{9.S(h)=exp(2pi tr(B)),Z} S(h(x,t))=\exp( 2\pi i\langle
y,(h(x,t)-I)\rangle)=\exp\Big(2\pi i {\rm
tr}\left((t-I)B(x,y)\right) \Big).
\end{equation}
Using results of \cite{Kos00f}
we conclude that the following lemma holds.
\begin{lem}
\label{mu-b-ergodic}The measure $\mu_b=\mu_{b,m}\otimes \mu_b^{(m)}$
is $B_{m,0}\times B^{(m)}_0$-right-ergodic if
$$
E(\mu_{b})=\sum_{k<n\leq
m}\frac{S^{R}_{kn}(\mu_{b})}{b_{kn}}<\infty.
$$
\end{lem}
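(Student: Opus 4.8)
The plan is to derive the ergodicity of $\mu_b$ from the general right-ergodicity criterion for Gaussian product measures established in \cite{Kos00f}, after first splitting the action of $B_{m,0}\times B^{(m)}_0$ into two independent one-sided actions. The key structural point is that the groups $B_m$ and $B^{(m)}$ commute inside $B^{\mathbb Z}$: for $(k,r)\in\Delta_m$ and $(p,q)\in\Delta^{(m)}$ one has $r>m\ge q$ and $p\le m<k$, so $E_{kr}E_{pq}=E_{pq}E_{kr}=0$. Hence, under the identification $X_m\simeq B_m\times B^{(m)}$ (a coset of $X_m=B(m)\backslash B^{\mathbb Z}$ being written $B(m)\,x_mx^{(m)}$ after absorbing $x_mx(m)x_m^{-1}\in B(m)$, cf.\ Remark~\ref{9.B(n,R)=LAU}), the subgroup $B_{m,0}\times B^{(m)}_0$ acts diagonally: for $t\in B_{m,0}$ one has $B(m)\,x_mx^{(m)}\,t=B(m)\,(x_mt)\,x^{(m)}$, and for $t\in B^{(m)}_0$ one has $B(m)\,x_mx^{(m)}\,t=B(m)\,x_m\,(x^{(m)}t)$; moreover $\mu_b=\mu_{b,m}\otimes\mu_b^{(m)}$ is the corresponding product measure, $B_{m,0}$ moving only the coordinates of $B_m$ and $B^{(m)}_0$ only those of $B^{(m)}$.

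I would then observe that a product of two ergodic one-sided actions is ergodic, so it suffices to prove that $\mu_b^{(m)}$ is $B^{(m)}_0$-right-ergodic and $\mu_{b,m}$ is $B_{m,0}$-right-ergodic. Indeed, if $a\in L^\infty(X_m,\mu_b)$ is invariant, then picking a countable dense subgroup of $B^{(m)}_0$ (generated by the $E_{kn}(t)$, $(k,n)\in\Delta^{(m)}$, with rational parameters) and applying Fubini to the product measure together with strong continuity of the quasi-regular representation, one gets that for $\mu_{b,m}$-a.e.\ $x_m$ the function $x^{(m)}\mapsto a(x_m,x^{(m)})$ is $\mu_b^{(m)}$-a.e.\ $B^{(m)}_0$-invariant, hence $\mu_b^{(m)}$-a.e.\ constant once $\mu_b^{(m)}$ is $B^{(m)}_0$-ergodic; then $c(x_m)=\int a(x_m,x^{(m)})\,d\mu_b^{(m)}(x^{(m)})$ lies in $L^\infty(B_m,\mu_{b,m})$ and is $B_{m,0}$-invariant, hence constant once $\mu_{b,m}$ is $B_{m,0}$-ergodic, so $a$ is $\mu_b$-a.e.\ constant.

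It remains to invoke \cite{Kos00f} for each factor. For $B^{(m)}$, the shift $E_{kn}(t)$, $(k,n)\in\Delta^{(m)}$, translates $x_{kn}$ by $t$ and simultaneously shifts each $x_{rn}$ with $r<k$ by $t\,x_{rk}$, so the action couples infinitely many coordinates; by Lemma~\ref{mu-b-quasi-inv} it is $B^{(m)}_0$-quasi-invariant exactly when $S^R_{kn}(\mu_b)=\sum_{r=-\infty}^{k-1}b_{rn}/b_{rk}<\infty$ for all $k<n\le m$, and the right-ergodicity criterion of \cite{Kos00f} for this class of Gaussian measures is precisely the additional convergence of $E(\mu_b)=\sum_{k<n\le m}S^R_{kn}(\mu_b)/b_{kn}$. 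The factor $B_m$ is treated the same way, the relevant shifts there moving only finitely many coordinates. The real content --- the step I expect to be the main obstacle were one to reprove it rather than cite it --- is the ergodicity criterion of \cite{Kos00f} itself: its proof analyzes the quasi-regular representation of $B^{(m)}_0$ (for instance through conditional expectations onto the algebras generated by finitely many coordinates) and uses the convergence of $E(\mu_b)$ to control the Radon--Nikodym cocycles uniformly enough to rule out non-constant $B^{(m)}_0$-invariant functions in $L^\infty(B^{(m)},\mu_b^{(m)})$ in the presence of the triangular coupling. Granting this result from \cite{Kos00f}, the proof is complete.
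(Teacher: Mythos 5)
Your proposal is consistent with the paper, which in fact offers no proof of Lemma~\ref{mu-b-ergodic} at all: the lemma is simply asserted with the sentence ``Using results of \cite{Kos00f} we conclude that the following lemma holds,'' so, like you, the author places the entire analytic content in the cited ergodicity result for Gaussian product measures. Your additional material --- checking that $B_m$ and $B^{(m)}$ commute so that $B_{m,0}\times B^{(m)}_0$ acts coordinate-wise on $X_m\simeq B_m\times B^{(m)}$ with $\mu_b=\mu_{b,m}\otimes\mu_b^{(m)}$ as a product measure, and the Fubini reduction of ergodicity of the product action to ergodicity of each factor --- is correct and is a genuine supplement to what the text provides; the only caveats are that $E(\mu_b)<\infty$ is stated as a sufficient condition rather than a criterion, and that both the one-sided ergodicity of $\mu_b^{(m)}$ under $B^{(m)}_0$ and that of $\mu_{b,m}$ under $B_{m,0}$ still rest entirely on \cite{Kos00f} (and \cite{Kos90,Kos92}), exactly as in the paper.
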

\begin{thm}
\label{Ind-infin-irr} The induced representation
$T^{m,y}$ of the group $B_0^{\mathbb Z}$ defined by
formula (\ref{Ind(B^Z_0)}), corresponding to generic orbit
${\mathcal
O}_y$, generated by the point \\
$y=\sum_{r=0}^\infty y_{m+r+1,m-r}E_{m+r+1,m-r}\in {\mathfrak b}_2^*(a)$
is irreducible if the measure $\mu_{b,m}\otimes \mu_b^{(m)}$ on the
group $B_{m}\times B^{(m)}$  is right $B_{m,0}\times
B^{(m)}_0$-ergodic. Moreover the generators of one-parameter groups
$A_{kr}=\frac{d}{dt}T^{m,y}_{I+tE_{kr}}\mid_{t=0}$ are as follows
$$
A_{kr}=\sum_{s=-\infty}^{k-1}x_{ks}D_{rs}+D_{kr},\,\, (k,r)\in \Delta^{(m)},\quad
A_{kr}=\sum_{s=m+1}^{k-1}x_{ks}D_{rs}+D_{kr},\,\, (k,r)\in \Delta_{m},
$$
$$
(2\pi i)^{-1}\big(A_{kr}\big)_{(k,r)\in \Delta(m)}={\mathbb S}^{(m)}=(S_{kr})_{(k,r)\in\Delta(m)}=\big(x_m^{-1}yx^{(m)}\big)^T.
$$
\end{thm}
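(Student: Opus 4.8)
The plan is to follow, step by step, the proof of the finite-dimensional Theorem~\ref{Ind-fin-irr}, replacing the automatic ergodicity of the Haar measure there by the hypothesis on $\mu_b$. Fix the Hilbert completion $B_2(a)$, write $G=B_0^{\mathbb Z}$ and $\mathcal H^m=L^2(X_m,\mu_b)$ with $X_m\simeq B_m\times B^{(m)}$. First I would restrict $T^{m,y}$ to the commutative subgroup $B_0(m)=B(m)\cap B_0^{\mathbb Z}$. By the argument of Remark~\ref{h(x,t)=e} (which is purely algebraic and carries over to infinite matrices), for $t\in B(m)$ one has $s(xt)=s(x)$, so the coset of $x$ in $X_m$ is not moved and $T^{m,y}_t$ acts on $\mathcal H^m$ as multiplication by $S(h(x,t))=\exp(2\pi i\langle y,h(x,t)-I\rangle)$; in particular for $t=E_{kr}(t_{kr})$, $(k,r)\in\Delta(m)$, this is multiplication by $\exp(2\pi i\,S_{kr}(t_{kr}))$. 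Hence the von Neumann algebra generated by $T^{m,y}|_{B_0(m)}$ is $\mathfrak A^S=\bigl(\exp(2\pi i\,S_{kr}(t))\mid t\in\mathbb R,\ (k,r)\in\Delta(m)\bigr)''$, while the algebra $\mathfrak A^x=\bigl(\exp(2\pi i\,t x_{kr})\mid t\in\mathbb R,\ (k,r)\in\Delta_m\cup\Delta^{(m)}\bigr)''$ equals $L^\infty(X_m,\mu_b)$ because the coordinates $x_{kr}$, $(k,r)\in\Delta_m\cup\Delta^{(m)}$, separate points of $X_m$.

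The crucial step is the infinite-dimensional analogue of Lemma~\ref{W(S_kn)=W(x_kn)m-n}, namely $\mathfrak A^S=\mathfrak A^x$. The inclusion $\mathfrak A^S\subseteq\mathfrak A^x$ is easy: by (\ref{S=B^T,Z}) one has $\mathbb S^{(m)}=(x^{(m)})^Ty^T(x_m^{-1})^T$, and by (\ref{x^{-1}_{kn}:=}) each entry $x^{-1}_{kr}$ of $x_m^{-1}\in B_m$ is a \emph{finite} polynomial in the coordinates (the sums in (\ref{x^{-1}_{kn}:=}) run over the finitely many indices strictly between $k$ and $r$), so every $S_{kr}$ is a finite polynomial in the $x_{kn}$ and hence affiliated with $\mathfrak A^x$. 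For the reverse inclusion I would use the Gauss decomposition (\ref{SJ=dec2}): with $J$ the anti-diagonal identification of the column index set $\{m+1,m+2,\dots\}$ with the row index set $\{m,m-1,\dots\}$, one has $\mathbb S^{(m)}J=LDU$ with $L=(x^{(m)})^T$, $D=y^TJ$, $U=J(x_m^{-1})^TJ$, and this is the Gauss decomposition of the infinite matrix $\mathbb S^{(m)}J$. Invoking the infinite Gauss decomposition (Theorem~\ref{t.C=UDL}), uniqueness of the $LDU$ factorisation expresses the entries of $x^{(m)}$ and of $x_m^{-1}$, and therefore of $x_m$, as a.e.-defined $\mu_b$-measurable functions of the entries of $\mathbb S^{(m)}$; each such function is again a finite algebraic expression, the fixed nonzero numbers $y_{m+p+1,m-p}$ appearing in denominators. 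Consequently every coordinate $x_{kr}$, $(k,r)\in\Delta_m\cup\Delta^{(m)}$, is affiliated with $\mathfrak A^S$, whence $\mathfrak A^x\subseteq\mathfrak A^S$ and $\mathfrak A^S=\mathfrak A^x=L^\infty(X_m,\mu_b)$.

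With this in hand irreducibility follows as in the locally compact case. If a bounded operator $A$ on $\mathcal H^m$ commutes with $T^{m,y}$, then $A$ commutes with $\mathfrak A^S=L^\infty(X_m,\mu_b)$, hence $A$ is multiplication by some $a\in L^\infty(X_m,\mu_b)$. For $t\in B_{m,0}\times B^{(m)}_0$ we have $h(x,t)=I$ by the first case of Remark~\ref{h(x,t)=e}, so $T^{m,y}_t$ is exactly the quasiregular action $f(x)\mapsto(d\mu_b(xt)/d\mu_b(x))^{1/2}f(xt)$ (using that $\mu_b$ is $B_{m,0}\times B^{(m)}_0$-right-quasi-invariant, Lemma~\ref{mu-b-quasi-inv}); commuting with all these operators forces $a(x)=a(xt)$ $(\mathrm{mod}\ \mu_b)$ for every $t\in B_{m,0}\times B^{(m)}_0$, and the hypothesis that $\mu_b=\mu_{b,m}\otimes\mu_b^{(m)}$ is $B_{m,0}\times B^{(m)}_0$-right-ergodic (e.g. $E(\mu_b)<\infty$, Lemma~\ref{mu-b-ergodic}) then gives $a=\mathrm{const}$, so $T^{m,y}$ is irreducible. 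The generator formulas come from the same description: for $(k,r)\in\Delta(m)$, $T^{m,y}_{I+tE_{kr}}$ is multiplication by $\exp(2\pi i\,S_{kr}(t))$, so $(2\pi i)^{-1}A_{kr}=S_{kr}$ and, collecting over $\Delta(m)$, $(2\pi i)^{-1}(A_{kr})_{(k,r)\in\Delta(m)}=\mathbb S^{(m)}=(x_m^{-1}yx^{(m)})^T$ by (\ref{S=B^T,Z}); for $(k,r)\in\Delta_m$ or $\Delta^{(m)}$ we have $h(x,E_{kr}(t))=I$, so $T^{m,y}_{I+tE_{kr}}$ is the quasiregular translation by $I+tE_{kr}$ on $X_m\simeq B_m\times B^{(m)}$, and differentiating at $t=0$ (the translation vector field together with the logarithmic-derivative term of the Gaussian density) yields the stated expressions $A_{kr}=\sum_s x_{ks}D_{rs}+D_{kr}$, exactly as in Theorem~\ref{Ind-fin-irr}.

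I expect the main obstacle to be the reverse inclusion $\mathfrak A^x\subseteq\mathfrak A^S$, i.e. the legitimacy of inverting the Gauss decomposition of the \emph{infinite} matrix $\mathbb S^{(m)}J$ on the support $X_{m,2}(a)$ of $\mu_b$: one must check that the relevant minors do not vanish $\mu_b$-a.e., that the resulting expressions for the $x_{kr}$ are genuinely $\mu_b$-measurable and define self-adjoint operators affiliated with $\mathfrak A^S$, and that no convergence problems arise. This is precisely where the band structure of the generic point $y$ — each entry of $x^{(m)}$ and $x_m^{-1}$ depending only on finitely many $S_{kr}$ — together with the results of Subsection~\ref{Gauss-dec-inf} on Gauss decomposition of infinite matrices is essential; quasi-invariance and ergodicity of $\mu_b$ enter only through Lemmas~\ref{mu-b-quasi-inv} and \ref{mu-b-ergodic}.
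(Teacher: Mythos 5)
Your overall architecture coincides with the paper's: restrict $T^{m,y}$ to the commutative subgroup $B_0(m)$, show that the von Neumann algebra $\mathfrak{A}^S$ it generates equals $\mathfrak{A}^x=L^\infty(X_m,\mu_b)$, deduce that a commuting operator is multiplication by an $L^\infty$ function, and invoke right-ergodicity of $\mu_b$ under $B_{m,0}\times B_0^{(m)}$ (Lemmas~\ref{mu-b-quasi-inv}, \ref{mu-b-ergodic}) to force it to be constant; the generator formulas are read off the same way. The easy inclusion $\mathfrak{A}^S\subseteq\mathfrak{A}^x$ and the final ergodicity step are fine.

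The gap is in the reverse inclusion $\mathfrak{A}^x\subseteq\mathfrak{A}^S$, precisely the step you flag as the main obstacle. You claim that the factorisation $\mathbb{S}^{(m)}J=LDU$ with $L=(x^{(m)})^T$, $D=y^TJ$, $U=J(x_m^{-1})^TJ$ is ``the Gauss decomposition of the infinite matrix $\mathbb{S}^{(m)}J$'' and that uniqueness expresses each entry of $x^{(m)}$ and $x_m^{-1}$ as a \emph{finite} algebraic expression in the $S_{kr}$. This is true for $G^m_n$ but false for $B_0^{\mathbb Z}$: the matrix $\mathbb{S}^{(m)}J$ is indexed by $\{\dots,m-1,m\}\times\{\dots,m-1,m\}$, a set with a last element but no first element, so there are no leading principal minors and Theorem~\ref{t.C=LDU,Z1} does not apply. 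Equivalently, after transposing to land on an $\mathbb N\times\mathbb N$ matrix one obtains the paper's $C=B(x,y)J=UDL$ ((\ref{BJ=UDL})), a factorisation anchored at the corner that has receded to infinity; its factors are \emph{not} finite functions of the entries of $C$. This is exactly why the paper's Lemma~\ref{W(S_kn)=W(x_kn)B^Z} takes the longer route: it first applies the Gauss theorem to $C$ itself to get $C=L_1D_1U_1$ with entries rational in the $c_{kn}$ (hence affiliated with $\mathfrak{A}^S$), inverts the triangular factors by (\ref{x^{-1}_{kn}:=}) to obtain the entries of $C^{-1}=U_1^{-1}D_1^{-1}L_1^{-1}$ as infinite series converging pointwise $\mu_b$-a.e., uses Lemma~\ref{lim-f(n)=f-in-von-Neu} to preserve affiliation under such limits, and only then applies the Gauss theorem to $C^{-1}=L^{-1}D^{-1}U^{-1}$, whose orientation is the correct one, to recover $x_m=U^{-1}$ and $(x^{(m)})^{-1}=J_mL^{-1}J_m$. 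Without this detour (or some substitute for it), your argument does not establish that the coordinates $x_{kr}$ are affiliated with $\mathfrak{A}^S$, and the identification $\mathfrak{A}^S=L^\infty(X_m,\mu_b)$ --- and hence irreducibility --- is not proved.
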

Here we denote by $D_{kn}=D_{kn}(\mu_b)$ the operator of the
partial derivative corresponding to the shift $x\mapsto x+tE_{kn}$
and the measure $\mu_b$ on the group $B_m\times B^{(m)}\ni
x=I+\sum x_{kr}E_{kr}$:
\begin{equation}
\label{D_{kn}(mu)}
(D_{kn}(\mu_b)f)(x)=\frac{d}{dt}\left(\frac{d\mu_b(x+tE_{kn})}{d\mu_b(x)}\right)^{1/2}\!\!f(x+tE_{kn})\mid_{t=0},
\quad
D_{kn}(\mu_b)=\frac{\partial}{\partial x_{kn}}-b_{kn}x_{kn}.
\end{equation}
{\it The irreducibility} of the induced representation of the group $B_0^{\mathbb Z}$  follows from the following  lemma.
\begin{lem}
\label{W(S_kn)=W(x_kn)B^Z} Two von Neumann algebra ${\mathfrak
A}^S$ and ${\mathfrak A}^x$  in the space ${\mathcal
H}^{m}=L^2(X_m,\mu_b)$ generated respectively by  the sets of
unitary operators $U_{kr}(t)$ and $V_{kr}(t)$ coincides, where
\begin{equation}
\label{U_kr(t),V_{kr}(t)-(m)}
(U_{kr}(t)f)(x)=\exp(2\pi i S_{kr}(t))f(x),\quad (V_{kr}(t)f)(x):=\exp(2\pi i tx_{kr})f(x),
\end{equation}
$$
{\mathfrak A}^S=\big(U_{kr}(t)=T_{I+tE_{kr}}^{m,y}=\exp(2\pi i S_{kr}(t))\mid  t\in {\mathbb R},\,\,
(k,r)\in \Delta(m)\big)'',
$$
$$
{\mathfrak A}^x=\big(V_{kr}(t)=\exp(2\pi i tx_{kr})\mid  t\in{\mathbb R},\,\,(k,r)\in
\Delta_{m}\bigcup\Delta^{(m)}\big)''.
$$
\end{lem}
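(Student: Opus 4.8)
The plan is to prove Lemma~\ref{W(S_kn)=W(x_kn)B^Z} by running, in the bi-infinite setting, the same argument that establishes its finite counterpart Lemma~\ref{W(S_kn)=W(x_kn)m-n}: prove the two inclusions separately, using in one direction that the inverse matrix entries of a unitriangular matrix are polynomials in its entries, and in the other direction the Gauss decomposition of Theorem~\ref{t.C=UDL} together with its infinite-matrix version recalled in Subsection~\ref{Gauss-dec-inf}. The only genuinely new work compared with the finite case concerns convergence of the bi-infinite series involved, and that is precisely what the summability conditions built into the construction of $\mu_b$ in Subsection~\ref{gen-orb,Z} are there for.

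\emph{Step 1: the inclusion ${\mathfrak A}^S\subseteq{\mathfrak A}^x$.} The operators $V_{kr}(t)$, $(k,r)\in\Delta_m\cup\Delta^{(m)}$, are multiplication operators in \emph{all} the coordinate functions of $B_m\times B^{(m)}$, so ${\mathfrak A}^x=L^\infty(B_m\times B^{(m)},\mu_b)$ is maximal abelian and every $\mu_b$-measurable, $\mu_b$-a.e. finite function is affiliated with it. It therefore suffices to check that each $S_{kr}$, $(k,r)\in\Delta(m)$, is such a function. By~(\ref{S=B^T,Z}) one has ${\mathbb S}=(x^{(m)})^Ty^T(x_m^{-1})^T$; the entries $x^{-1}_{kn}$ of $x_m^{-1}\in B_m$ are \emph{finite} polynomials in the $x_{kr}$ by~(\ref{x^{-1}_{kn}:=}), the matrix $x^{(m)}$ has the same property, and $y$ is ``anti-diagonal'' in the sense that only $y_{m+p+1,m-p}\neq0$. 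Hence each $S_{kr}$ is a series $\sum_p(\text{polynomial in }x_\ast)\,y_{m+p+1,m-p}$, whose $\mu_b$-almost-everywhere convergence is guaranteed by the summability conditions ($\sum a_{kn}/b_{kn}<\infty$, $S^{R}_{kn}(\mu_b)<\infty$, etc.) imposed on $\mu_b$ in Subsection~\ref{gen-orb,Z}. Thus $S_{kr}\,\eta\,{\mathfrak A}^x$ and $U_{kr}(t)\in{\mathfrak A}^x$ for all $(k,r)\in\Delta(m)$.

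\emph{Step 2: the inclusion ${\mathfrak A}^S\supseteq{\mathfrak A}^x$.} Here I would express the coordinate functions $x_{kr}$, $(k,r)\in\Delta_m\cup\Delta^{(m)}$, through the $S_{kr}$'s. As in~(\ref{SJ=dec2}), with $J$ the anti-diagonal involution one has
$$
{\mathbb S}^{(m)}J=LDU,\qquad L=(x^{(m)})^T,\quad D=y^TJ,\quad U=J(x_m^{-1})^TJ,
$$
a Gauss decomposition with $L$, $U$ unipotent (lower, resp. upper) and $D$ diagonal carrying the \emph{nonzero} numbers $y_{m+p+1,m-p}$. Restricted to the truncation $G^m_n$ this is a genuine finite $LDU$-decomposition — precisely the one exhibited in the displays for $G^3_1$, $G^3_2$ and in~(\ref{S_4J}) — so, $D$ being invertible, the Gaussian-elimination formulas of Theorem~\ref{t.C=UDL} express each entry of $L$ and of $U$, i.e. each $x_{kr}$ with $(k,r)\in\Delta^{(m,n)}$ and each $x^{-1}_{kr}$ with $(k,r)\in\Delta_{m,n}$, as a fixed polynomial in finitely many $S_{k'r'}$ and finitely many $y_{m+p+1,m-p}^{-1}$. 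These formulas are \emph{local}, so they are stable as $n\to\infty$, and we obtain $x_{kr}\,\eta\,{\mathfrak A}^S$ for $(k,r)\in\Delta^{(m)}$ and $x^{-1}_{kr}\,\eta\,{\mathfrak A}^S$ for $(k,r)\in\Delta_m$; inverting the latter via~(\ref{x^{-1}_{kn}:=}) gives $x_{kr}\,\eta\,{\mathfrak A}^S$ for $(k,r)\in\Delta_m$ as well. Hence $V_{kr}(t)\in{\mathfrak A}^S$ for all $(k,r)\in\Delta_m\cup\Delta^{(m)}$, and ${\mathfrak A}^x\subseteq{\mathfrak A}^S$.

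\emph{Main obstacle.} The algebra is word-for-word the finite-dimensional one; what must be handled with care is the analysis: one must guarantee that the bi-infinite series defining the $S_{kr}$ converge $\mu_b$-a.e. (so that Step~1 makes sense at all), and one must ensure that the ``Gauss decomposition from the corner'' used in Step~2, and the corresponding local formulas, survive the passage $n\to\infty$. Both are controlled by the summability conditions placed on $\mu_b$ in Subsection~\ref{gen-orb,Z} and by the hypothesis $y_{m+p+1,m-p}\neq0$, which keeps $D$ invertible at every truncation. Once these are in place, ${\mathfrak A}^S={\mathfrak A}^x=L^\infty(B_m\times B^{(m)},\mu_b)$, and Theorem~\ref{Ind-infin-irr} follows as in the proof of Theorem~\ref{Ind-fin-irr}: a bounded operator commuting with $T^{m,y}$ commutes with all of $L^\infty$, hence is multiplication by a function invariant under $B_{m,0}\times B^{(m)}_0$, hence constant by the assumed right-ergodicity of $\mu_b$.
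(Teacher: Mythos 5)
Your Step 1 follows the paper's argument and is fine. The genuine gap is in Step 2, in the assertion that the Gauss--elimination formulas for ${\mathbb S}^{(m)}J=LDU$ are ``local'' and ``stable as $n\to\infty$''. They are not. The formulas (\ref{U,L=}) of Theorem~\ref{t.C=UDL} express the entries of $L$ and $U$ through the principal minors $M^{1,\dots,k}_{1,\dots,k}$ anchored at the \emph{upper-left} corner of the matrix being decomposed. For the truncations ${\mathbb S}_nJ$ (see (\ref{S_4J})) that corner is the entry indexed by $(m-n,\,m+n+1)$ --- the $(1,1)$ entry of the diagonal factor is $y_{m+n+1,m-n}$, the \emph{outermost} anti-diagonal entry of $y$ --- and it recedes to infinity with $n$: passing from $G^m_n$ to $G^m_{n+1}$ borders the matrix by a new first row and first column, so every elimination step, hence every formula for $l_{mk}$ and $u_{km}$, changes with $n$. (The nested-minor property of the $LDU$ decomposition is stable under bordering at the bottom-right, not at the top-left.) The only corner of ${\mathbb S}J$ that persists in the bi-infinite limit is the bottom-right one, $(k,r)=(m,m+1)$, and the factorization seen from that corner is of type $UDL$, to which Theorems~\ref{t.C=UDL} and \ref{t.C=LDU,Z1} do not directly apply. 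So Step 2 as written does not produce well-defined expressions for the $x_{kr}$ in terms of the $S_{k'r'}$; invertibility of $D$ at every truncation and the summability hypotheses on $\mu_b$ do not cure this, because the formulas themselves fail to stabilize.

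This is exactly the difficulty the paper's proof is built around. It re-indexes from the fixed corner by working with $C=B(x,y)J={\mathbb S}^TJ$, an ${\mathbb N}\times{\mathbb N}$ matrix whose $(1,1)$ entry involves the innermost $y_{m+1,m}$ and which grows by bordering at the bottom-right; but there $C=UDL$ with $U=x_m^{-1}$, $D=yJ_m$, $L=J_mx^{(m)}J_m$, i.e.\ the factors come in the ``wrong'' order for Theorem~\ref{t.C=LDU,Z1}. The paper therefore passes to $C^{-1}=L^{-1}D^{-1}U^{-1}$, which \emph{is} an $LDU$ decomposition anchored at the fixed corner, and applies Theorem~\ref{t.C=LDU,Z1} to it to recover $U^{-1}=x_m$ and $L^{-1}=J_m(x^{(m)})^{-1}J_m$. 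The entries of $C^{-1}$ are in turn obtained from an auxiliary decomposition $C=L_1D_1U_1$ (Theorem~\ref{t.C=LDU,Z1} again, valid on $\Gamma_C$) via the infinite series $c^{-1}_{kn}=\sum_{j}U^{-1}_{1;kj}d^{-1}_{1;j}L^{-1}_{1;jn}$, whose pointwise a.e.\ convergence together with Lemma~\ref{lim-f(n)=f-in-von-Neu} yields $c^{-1}_{kn}\,\eta\,{\mathfrak A}^S$ and hence $x_{kr}\,\eta\,{\mathfrak A}^S$. To repair your argument you must replace the ``local formulas'' claim by some such device that anchors the elimination at the corner that actually survives the limit.
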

\begin{proof}
Using the decomposition (\ref{S=B^T,Z})
$$
{\mathbb S}^{(m)}=B(x,y)^T=(x_m^{-1}yx^{(m)})^T=(x^{(m)})^Ty^T(x_m^{-1})^T
$$
we conclude that ${\mathfrak A}^S\subseteq {\mathfrak A}^x$ (see
the proof of Lemma~\ref{W(S_kn)=W(x_kn)m-n}).

To prove that ${\mathfrak A}^S\supseteq {\mathfrak A}^x$ it is
sufficient  to find the expressions of the matrix element of the
matrix $x^{(m)}\in B^{(m)}$ and $x_m^{-1}\in B_m$ in terms of the
matrix elements of the matrix ${\mathbb
S}^{(m)}=(S_{kr})_{(k,r)\in \Delta(m)}$.
To do this we connect the above decomposition ${\mathbb
S}^{(m)}=B(x,y)^T$ (see (\ref{B(x,y),Z})) and the Gauss
decomposition $C=LDU$ for infinite matrices  (see
Theorem~\ref{t.C=LDU,Z1}). By (\ref{B(x,y),Z}) we get
$B(x,y)=x_m^{-1}yx^{(m)}$.

To find a matrix connected with the matrix ${\mathbb S}^{(m)}$, for which an appropriate decomposition
$LDU$ holds we {\it recall the expressions for  $B(x,y)$  for small} $n$ and finite-dimensional groups $G_n^m$ (see Example (\ref{B_6-orb})).
We note that $J_m^2=I$, where
$$
J_m\in{\rm Mat}(\infty,{\mathbb R}),\quad J_m=\sum_{r\in{\mathbb Z}}E_{m+r+1,m-r}.
$$
For $G^3_3$ we get
$$
B(x,y)=x_m^{-1}yx^{(m)}= \left(\begin{smallmatrix}
1&x_{45}^{-1}&x_{46}^{-1}&x_{47}^{-1}\\
0&1&x_{56}^{-1}&x_{57}^{-1}\\
0&0&1&x_{67}^{-1}\\
0&0&0&1\\
\end{smallmatrix}\right)
\left(\begin{smallmatrix}
0&0&0&y_{43}  \\
0&0&y_{52}  &0\\
0&y_{61}&0  &0\\
y_{70}&0  &0&0\\
\end{smallmatrix}\right)
\left(\begin{smallmatrix}
1&x_{01}&x_{02}&x_{03}\\
0&1     &x_{12}&x_{13}\\
0&0&1&x_{23}  \\
0&0&0&1\\
\end{smallmatrix}\right),
$$
\begin{equation}
\label{C_8} B(x,y)J= \left(\begin{smallmatrix}
1&x_{45}^{-1}&x_{46}^{-1}&x_{47}^{-1}\\
0&1&x_{56}^{-1}&x_{57}^{-1}\\
0&0&1&x_{67}^{-1}\\
0&0&0&1\\
\end{smallmatrix}\right)
\left(\begin{smallmatrix}
y_{43}&0&0&0  \\
0&y_{52}&0  &0\\
0&0&y_{61}  &0\\
0&0  &0&y_{70}\\
\end{smallmatrix}\right)
\left(\begin{smallmatrix}
1&0&0&0\\
x_{23}&1&0&0  \\
x_{13}&x_{12}&1& 0\\
x_{03}&x_{02}& x_{01}&1\\
\end{smallmatrix}\right).
\end{equation}
We use the infinite-dimensional analog of the latter presentation, i.e. instead of the group $G_n=B(n,{\mathbb R})$ consider
the infinite-dimensional group $B_0^{\mathbb Z}$ and do the same. Let
$$x_m\in B_m,\,\,x^{(m)}\in B^{(m)},\,\,y=\sum_{r=0}^\infty
y_{m+r+1,m-r}E_{m+r+1,m-r}\in {\mathfrak g}^*_2(a)
$$
and $J=J_m= \sum_{r\in {\mathbb Z}}E_{m+r+1,m-r}$. Then we get
${\mathbb S}^T=B(x,y)=x_m^{-1}yx^{(m)}.$

Set $C=C(x,y)=B(x,y)J$ then $C=UDL$, more precisely we have:
\begin{equation}
\label{BJ=UDL}
B(x,y)J=x_m^{-1}yJ_mJ_mx^{(m)}J_m=UDL,\,\,\text{where}\,\,
U=x_m^{-1},\,\,D=yJ_m,\,\,L=J_mx^{(m)}J_m,
\end{equation}
\begin{equation}
\label{C=BJ=UDL}
C=B(x,y)J=
\left(\begin{smallmatrix}
1&x_{45}^{-1}&x_{46}^{-1}&x_{47}^{-1}&\dots\\
0&1&x_{56}^{-1}&x_{57}^{-1}&\dots\\
0&0&1&x_{67}^{-1}&\dots\\
0&0&0&1&\dots\\
 &&&&\dots\\
\end{smallmatrix}\right)
\left(\begin{smallmatrix}
y_{43}&0&0&0  &\dots\\
0&y_{52}&0  &0&\dots\\
0&0&y_{61}  &0&\dots\\
0&0  &0&y_{70}&\dots\\
 &&&&\dots\\
\end{smallmatrix}\right)
\left(\begin{smallmatrix}
1&0&0&0&\dots\\
x_{23}&1&0&0 &\dots \\
x_{13}&x_{12}&1& 0&\dots\\
x_{03}&x_{02}& x_{01}&1&\dots\\
 &&&&\dots\\
\end{smallmatrix}\right),
\end{equation}
\begin{equation*}
C=\left(\begin{smallmatrix}
c_{11}&c_{12}&\dots&c_{1n}&\dots\\
c_{21}&c_{22}&\dots&c_{2n}&\dots\\
            &&\dots&&\dots\\
c_{n1}&c_{n2}&\dots&c_{nn}&\dots\\
 &&\dots&&\dots\\
 \end{smallmatrix}\right)=
 \left(\begin{smallmatrix}
1&u_{12}&\dots&u_{1n}&\dots\\
0&1&\dots&u_{2n}&\dots\\
            &&\dots&&\dots\\
0&0&\dots&1&\dots\\
 &&\dots&&\dots\\
 \end{smallmatrix}\right)
\left(\begin{smallmatrix}
d_{1}&0&\dots&0&\dots\\
0&d_{2}&\dots&0&\dots\\
            &&\dots&&\dots\\
0&0&\dots&d_{n}&\dots\\
 &&\dots&&\dots\\
\end{smallmatrix}\right)
\left(\begin{smallmatrix}
1&0&\dots&0\\
l_{21}&1&\dots&0&\dots\\
            &&\dots&&\dots\\
l_{n1}&l_{n2}&\dots&1&\dots\\
 &&\dots&&\dots\\
 \end{smallmatrix}\right).
 \end{equation*}

 To finish the proof of the Lemma it is sufficient to find the
decomposition (\ref{C=BJ=UDL}) $C=UDL$ .

Let us {\it suppose that we can find the inverse
matrix} $C^{-1}$. Then by (\ref{BJ=UDL}) holds
$C^{-1}=L^{-1}D^{-1}U^{-1}$ and we can use
Theorem~\ref{t.C=LDU,Z1} to find
$$L^{-1}=J_m(x^{(m)})^{-1}J_m,\quad D^{-1}=y^{-1}J_m,\quad U^{-1}=x_m.
$$
Hence, we can find the matrix elements of
the matrix $(x^{(m)})^{-1}\in B^{(m)}$ and $x_m\in B_m$
in terms of the matrix elements of the matrix $C^{-1}=({\mathbb S}^TJ)^{-1}=(B(x,y)J)^{-1}$.
Finally, we can also find the matrix elements of the matrix $x^{(m)}\in B^{(m)}$ using formulas
(\ref{x^{-1}_{kn}:=}).  This finish the proof of the lemma since in this case we have
$x_{kr}\,\,\eta\,\, {\mathfrak A}^S$ for  $(k,r)\in\Delta_{m}\bigcup\Delta^{(m)}$. Hence
${\mathfrak A}^S\subseteq{\mathfrak A}^x$.

1) To find the inverse matrix $C^{-1}$ we write two decompositions:
\begin{equation}
\label{C=LUD=U'L'D'}
C=L_1D_1U_1=UDL,\quad C^{-1}=(U_1)^{-1}(D_1)^{-1}(L_1)^{-1}=L^{-1}D^{-1}U^{-1}.
\end{equation}
2) Using (\ref{C=LUD=U'L'D'}) we can find $L_1,D_1$ and $U_1$ by  Theorem~\ref{t.C=LDU,Z1}. More precisely,
for  all $x\in \Gamma_G$, where
$$
\Gamma_C=\{x\in B_m\times B^{(m)}\mid M^{12\dots k}_{12\dots k}(C(x))\not=0,\,\,k\in{\mathbb N}\}
$$
holds the decomposition $C(x)=L_1D_1U_1$ and the matrix elements of the matrix $L_1,\,\,D_1$ and $U_1$ are rational functions in $c_{kn}(x)$. \\
3) We can find $(L_1)^{-1}$ and $(U_1)^{-1}$ using  formulas (\ref{x^{-1}_{kn}:=}). Note that
$J_mLJ_m,\,U,\,$ and $J_mL^{-1}J_m,\,U^{-1}\in B_2(a)$.\\
4) Using identity (\ref{C=LUD=U'L'D'})  we can calculate  $C^{-1}=(U_1)^{-1}(D_1)^{-1}(L_1)^{-1}$, since
$L^{-1},\,\,D^{-1}$ and $U^{-1}$ are well defined.  \\
5) Using equality (\ref{C=LUD=U'L'D'}) we can find the decomposition $C^{-1}=L^{-1}D^{-1}U^{-1}$ of the matrix
$C^{-1}$ by Theorem~\ref{t.C=LDU,Z1}. In other words, the decompositions holds $C^{-1}=L^{-1}D^{-1}U^{-1}$ for all
$x\in \Gamma_{G^{-1}}$, where
$$
\Gamma_{C^{-1}}=\{x\in B_m\times B^{(m)}\mid M^{12\dots k}_{12\dots k}(C^{-1}(x))\not=0,\,\,k\in{\mathbb N}\}
$$
and the matrix elements of the matrix $L^{-1},\,\,D^{-1}$ and $U^{-1}$ are rational
functions in matrix elements $c^{-1}_{kn}(x)$ of the matrix $C^{-1}$.

We make the last remark.
Let us denote $(L_1)^{-1}=(L_{1;kn}^{-1})_{kn},(D_1)^{-1}={\rm diag}(d^{-1}_{1;k})_k$ and $(U_1)^{-1}=(U_{1;kn}^{-1})_{kn}$.
The decompositions $C=L_1D_1U_1$ and $C^{-1}=(U_1)^{-1}(D_1)^{-1}\times$ $(L_1)^{-1}$ hold for $x\in \Gamma_C\cap\Gamma_{C^{-1}}$, i.e. almost for all
$x\in B_m\times B^{(m)}$ with respect to the measure $\mu_b$ since $\mu_b(\Gamma_C\cap\Gamma_{C^{-1}})=1$. We conclude that the convergence
$$
c_{kn}^{-1}(x)=\sum_{m\in \mathbb N}U_{1;km}^{-1}d^{-1}_{1;m}L_{1;mn}^{-1},\,\,k,n\in {\mathbb N}
$$
holds  pointwise almost everywhere $x\in B_m\times B^{(m)}\,\,({\rm
mod}\,\,\mu_b)$. Since $U_{1;km}^{-1},\,\,d^{-1}_{1;m}$ and
$L_{1;mn}^{-1}\,\eta\,{\mathfrak A}^S$ by 2) and 3), we conclude by Lemma~\ref{lim-f(n)=f-in-von-Neu}  that
$c_{kn}^{-1}(x)\,\eta\,{\mathfrak A}^S$. This finish the proof of
the lemma. 
\end{proof}
\begin{proof} {\it of the Theorem~\ref{Ind-infin-irr}}. To prove the irreducibility of the induced representation
consider the restriction $T^{m,y}\mid_{B_0(m)}$ of this representation to the commutative subgroup $B_0(m)$ of
the group $B_0^{\mathbb Z}$. Note that
$$
{\mathfrak A}^x=\big(\exp(2\pi i tx_{kr})\mid t\in {\mathbb R},\,\,  (k,r)\in \Delta_{m}\bigcup\Delta^{(m)}
\big)''=L^\infty(B_m\times B^{(m)},\mu_{b,m}\otimes \mu_b^{(m)}).
$$
By Lemma~\ref{W(S_kn)=W(x_kn)B^Z} the von Neumann algebra ${\mathfrak A}^S$ generated by this restriction
coincides with ${\mathfrak A}^x=L^\infty(B_m\times B^{(m)},\mu_{b,m}\otimes \mu_b^{(m)})$. Let now a bounded operator $A$ in
the Hilbert space ${\mathcal H}^{m}$
commute with the representation $T^{m,y}$. Then $A$ commute by the above arguments with $L^\infty(B_m\times
B^{(m)},\mu_{b,m}\otimes \mu_b^{(m)})$, therefore the operator $A$ itself is an operator of multiplication by some essentially bounded function
 $a\in L^\infty$ i.e. $(Af)(x)=a(x)f(x)$ for
$f\in {\mathcal H}^{m}$. Since $A$ commute with the representation $T^{m,y}$ i.e. $[A,T^{m,y}_t]=0$ for all
$t\in B_{m,0}\times B^{(m)}_0$, where $B_{m,0}=B_m\cap B_0^{\mathbb Z}$ and
$B_0^{(m)}=B^{(m)}\cap B_0^{\mathbb Z}$, we conclude  that
$$
a(x)=a(xt)\,\,({\rm mod}\,\, \mu_{b,m}\otimes \mu_b^{(m)})\quad \text{for \quad all}\quad t\in B_{m,0}\times B^{(m)}_0.
$$
Since the measure $\mu_{b,m}\otimes \mu_b^{(m)}$ on the group $B_{m}\times B^{(m)}$
is right $B_{m,0}\times B^{(m)}_0$-ergodic we conclude
that $a(x)=const$ $({\rm mod}\,\,dx_m\otimes dx^{(m)})$.
\end{proof}
\begin{rem} We would like to show that $T^{m,y}=\lim_nT^{m,y_n}$. To be more precise consider the projection
$B^{\mathbb Z}_0\mapsto G^{m}_n$ of the group $B^{\mathbb Z}_0$ on the subgroup $G^{m}_n$ and all other projections: homogeneous spaces,
 measures, Hilbert spaces and representations:
$$
X_m=B_m\times B^{(m)}\mapsto X_{m,n}=B_{m,n}\times B^{(m,n)},\quad \mu_{b,m}\otimes \mu_b^{(m)}\mapsto \mu_{b,m,n}\otimes \mu_b^{(m,n)}
$$
$$
{\mathcal H}^{m}=L^2(B_m\times B^{(m)},\mu_{b,m}\otimes \mu_b^{(m)})\mapsto L^2(B_{m,n}\times B^{(m,n)},
\mu_{b,m,n}\otimes \mu_b^{(m,n)})
$$
$$
\cong L^2(B_{m,n}\times B^{(m,n)},dx_{m,n}\otimes dx^{(m,n)})={\mathcal H}^{m,n}
$$
$$
T^{m,y}\mapsto T^{m,y_n},\quad n\in{\mathbb N}.
$$
\end{rem}
Since the measure $\mu_{b,m,n}\otimes \mu_b^{(m,n)}$ is equivalent with the Haar measure
(compare (\ref{Haar-on-G^m_n}) and (\ref{Gauss-on-G^m_n})) we conclude that the corresponding representations
$T^{\mu,m,y_n}$ in the spaces $L^2(B_{m,n}\times B^{(m,n)},
\mu_{b,m,n}\otimes \mu_b^{(m,n)})$ and $T^{m,y_n}$ in the space
$L^2(B_{m,n}\times B^{(m,n)},dx_{m,n}\otimes dx^{(m,n)})$ are equivalent.
This implies $T^{m,y}=\lim_nT^{m,y_n}$.
\subsection{Dual description of the groups $B_0^{\mathbb N}$ and $B_0^{\mathbb Z}$. First steps.}
\label{dual-astep}
Let $\hat{G}$ be the dual of the group $G$. Our {\it aim} is to
describe $\hat{G}$ for $G=\varinjlim_{n}G_n$ where
$G_n=B(n,{\mathbb R})$ is the group of all $n\times n$ upper
triangular real matrices with units on the principal diagonal,
i.e. we would like to describe the dual of the group $B_0^{\mathbb
N}$ of infinite in one direction and $B_0^{\mathbb Z}$ infinite in both directions matrices.
Consider the  inductive limit $G=\varinjlim_{n}G_n$ of nilpotent
groups $G_n=B(n,{\mathbb R})$. The symmetric (resp. nonsymmetric)
imbedding gives us two infinite-dimensional analog of ``nilpotent''
groups  $B_0^{\mathbb Z}$ (resp. $B_0^{\mathbb N}$).

{\it We do not know the description of all} $\hat{G}$. We only know that the set $\hat{G}$ contains the following
three classes of representations. \\
1) The set $\hat{G}$ contains $\bigcup_{n}\hat{G_n}$ i.e. $\hat{G}\supset\bigcup_{n}\hat{G_n}$.
One may use Kirillov's
orbit method \cite{Kir62, Kir94} to describe $\hat{G_n}$. The
embedding $\hat{G_n}\subset\hat{G_{n+1}}$ is described in
Remark~\ref{orbit-fin}.\\
2) We have $\hat{G}\setminus\bigcup_{n}\hat{G_n}\not=\emptyset.$ Namely
$\hat{G}\setminus\bigcup_{n}\hat{G_n}$ contains "regular"
$T^{R,\mu}$ and "quasiregular" $\pi^{R,\mu,X}$ representations of
the group $G$ (see subsection~\ref{reg+quasireg-rep}).\\
3) Induced representations (see subsection~\ref{orb-meth,1-step}).

It is natural together with the group $B_0^{\mathbb N}$ (resp. $B_0^{\mathbb Z}$) consider
all Hilbert-Lie completion $B^{\mathbb N}_2(a)$ (resp. $B^{\mathbb Z}_2(a)$) and the group of all
upper-triangular matrices $B^{\mathbb N}$ (resp. $B^{\mathbb Z}$)
(see subsections \ref{Hilb-Lie-B_2(a)}, \ref{Hilb-Lie-GL_2(a)})
\begin{equation*}\label{all-Group1}
G_n\rightarrow B_0^{\mathbb N}\rightarrow B^{\mathbb N}_2(a)\rightarrow
B^{\mathbb N}\rightarrow G_n.
\end{equation*}
\begin{equation*}
G_n^m\rightarrow B_0^{\mathbb Z}\rightarrow B^{\mathbb Z}_2(a)\rightarrow
B^{\mathbb Z}\rightarrow G_n^m.
\end{equation*}
Together with all imbedding and projections of  all mentioned groups
$G_n=B(n,{\mathbb R})$ we have:
\begin{equation*}\label{all-Group2}
B(n,{\mathbb R})\stackrel{i_{n}^{n+1}}{\rightarrow} B(n+1,{\mathbb
R})\stackrel{i_{n}^{\infty}}{\rightarrow} B_0^{\mathbb N}\rightarrow
B_2(a)\rightarrow B^{\mathbb N}\rightarrow B(n+1,{\mathbb
R})\stackrel{p^{n}_{n+1}}{\rightarrow}B(n,{\mathbb R}),
\end{equation*}
where the imbedding $i_{n}^{n+1}$ and the projections $p^{n}_{n+1}$
are defined as follows:
\begin{equation*}\label{df.i}
B(n,{\mathbb R})\ni x\mapsto i_{n}^{n+1}(x)=x+E_{n+1,n+1}\in
B(n+1,{\mathbb R}),
\end{equation*}
\begin{equation*}\label{df.p}
B(n+1,{\mathbb R})\ni x=x^{n+1}x_n\mapsto p^{n}_{n+1}(x)=x_n\in
B(n,{\mathbb R}),
\end{equation*}
\begin{equation*}
\text{where}\quad x^{n+1}=I+\sum_{k=1}^nx_{kn+1}E_{kn+1},\quad x_n=I+\sum_{1\leq
k<m\leq n}x_{km}E_{km}.
\end{equation*}
For groups $G^{m}_n\simeq B(2n,{\mathbb R})$ defined by (\ref{G^m_n=}) consider the
homomorphism $p^{s,m,n}_{n+1}:G^{m}_{n+1} \mapsto G^{m}_n$ defined
as follows (for simplicity we define $p^{s,m,n}_{n+1}$ for $m=0$)
\begin{equation*}\label{df.p-symm}
G^{0}_{n+1}\ni x=x^{n+1}_{\uparrow}x_n x^{n}_{\to}\mapsto
p^{s,0,n}_{n+1}(x)=x_n\in G^{0}_n,
\end{equation*}
where
$$x^{n+1}_{\uparrow}=I+\sum_{-n<k< n+1 } x_{k,n+1}E_{k,n+1},\quad
x^{n}_{\to}=I+\sum_{-n<k\leq n+1 } x_{-n,k}E_{-n,k}.
$$
\begin{rem}
\label{orbit-fin}  The embedding $\widehat{B(n,{\mathbb
R})}\mapsto \widehat{B(n+1,{\mathbb R})}$ (resp.
$\widehat{G^{m}_n}\mapsto \widehat{G^{m}_{n+1}}$) is induced by
the homomorphism (\ref{df.p}) $p^{n}_{n+1}:B(n+1,{\mathbb
R})\mapsto B(n,{\mathbb R})$ (resp. by the homomorphism
(\ref{df.p-symm}) $p^{s,m,n}_{n+1}:G^{m}_{n+1} \mapsto G^{m}_n$ ).
So for $m\in {\mathbb Z}$ we get $\bigcup_{n\in{\mathbb N}}
\widehat{G^{(m)}_n}\subset\widehat{B_0^{\mathbb Z}} $.
Similarly, we have $\cup_{n\in{\mathbb N}}\widehat{B(n,{\mathbb
N})}\subset\widehat{B_0^{\mathbb N}}$
\end{rem}
Let us denote by $B_2^{\mathbb N}(a)$ (resp. $B_2^{\mathbb Z}(a)$)
the completion of the subgroup $B_0^{\mathbb N}\subset {\rm
GL}_0(2\infty,{\mathbb R})$ (resp. $B_0^{\mathbb Z}\subset {\rm
GL}_0(2\infty,{\mathbb R})$) in the Hilbert-Lie group ${\rm
GL}_2(a)$. Since (see \cite{Kos88})
$$
B_0^{\mathbb N}=\bigcap_{a\in{\mathfrak A}}B_2^{\mathbb
N}(a)\quad\text{(resp.}\quad B_0^{\mathbb Z}=\bigcap_{a\in{\mathfrak
A}}B_2^{\mathbb Z}(a))
$$ we conclude that
$$
\widehat{B_0^{\mathbb N}}=\bigcup_{a\in{\mathfrak
A}}\widehat{B_2^{\mathbb N}(a)}\quad\text{(resp.}\quad
\widehat{B_0^{\mathbb Z}}=\bigcup_{a\in{\mathfrak
A}}\widehat{B_2^{\mathbb Z}(a)}).
$$
It leaves to describe $\widehat{B_2^{\mathbb N}(a)}$ (resp.
$\widehat{B_2^{\mathbb Z}(a)}$) for all $a\in{\mathfrak A}$. The
problem of developing the orbit method for the Hilbert-Lie group
$B_2^{\mathbb N}(a)$ (resp. $B_2^{\mathbb Z}(a)$) could be  easier,
since the corresponding Lie algebra ${\mathfrak b}_2^{\mathbb N}(a)$
(resp. ${\mathfrak b}_2^{\mathbb Z}(a)$) is a Hilbert-Lie algebra,
the dual $({\mathfrak b}_2^{\mathbb N}(a))^*$ (resp. $({\mathfrak
b}_2^{\mathbb Z}(a))^*$) and the pairing between ${\mathfrak
b}_2^{\mathbb N}(a)$ (resp. ${\mathfrak b}_2^{\mathbb Z}(a)$) and
$({\mathfrak b}_2^{\mathbb N}(a))^*$ (resp. $({\mathfrak b}_2^{\mathbb Z}(a))^*$) are well defined
(see  subsection \ref{orb-meth,1-step}).

Using (\ref{all-Group2}) we conclude
\begin{equation}
\label{ind,proj} B_0^{\mathbb N}= \varinjlim_{n,i}B(n,{\mathbb R}),\quad
B_0^{\mathbb N}=\varprojlim_{a}B^{\mathbb N}_2(a),\quad B^{\mathbb
N}=\varprojlim_{n,p}B(n,{\mathbb R}),
\end{equation}
\begin{equation*}
\widehat{B_0^{\mathbb N}}\supset \widehat{B^{\mathbb N}_2(a)}\supset
\widehat{B^{\mathbb N}},
\end{equation*}
finally we conclude that
\begin{equation}
\label{hat(fin,hilb,arb)}
 \widehat{B_0^{\mathbb N}}=\bigcup_{a\in {\mathfrak
A}}\widehat{B^{\mathbb N}_2(a)},\quad \widehat{B^{\mathbb
N}}=\bigcup_{n\in{\mathbb N}}\widehat{G_n}=\bigcup_{n\in{\mathbb
N}}\widehat{B(n,{\mathbb R})}.
\end{equation}
The similar relations holds also for  groups $B_0^{\mathbb Z}\subset B^{\mathbb Z}_2(a)\subset B^{\mathbb Z}$.
\begin{df} We call the representation of the group
$G=\varinjlim_{n}G_n$ {\rm local} if it
depends only on the elements of the subgroup $G_n$ for some fixed
$n\in {\mathbb N}$.
\end{df}
\index{representation!local}
The last relation in (\ref{ind,proj}) and (\ref{hat(fin,hilb,arb)}) we can reformulated as follows:
\begin{thm}(V.L.~Ostrovsky, PhD dissertation, 1986).
The class of all irreducible unitary {\rm local representations}  of the group
$B_0^{\mathbb N}=\varinjlim_{n}B(n,{\mathbb R})$ coincides  with the
class $\bigcup_{n}\hat{G_n}$.
\end{thm}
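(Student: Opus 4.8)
The plan is to reduce the statement to the (soft) structure of the truncation homomorphisms together with Kirillov's description of $\hat{G_n}$. For each $n$ let $\tau_n\colon B_0^{\mathbb N}\to G_n=B(n,{\mathbb R})$ be the map that cuts a matrix down to its top-left $n\times n$ block; upper-triangularity makes $\tau_n$ a group homomorphism, it is surjective since $\tau_n|_{G_n}=\mathrm{id}$ on the embedded copy $G_n\subset B_0^{\mathbb N}$, and it is nothing but the homomorphism $p^n_{n+1}$ of Remark~\ref{orbit-fin} composed along the tower. Its kernel $N_n=\{x\in B_0^{\mathbb N}\mid x_{kr}=0\ \text{for all}\ r\le n\}$ is a normal subgroup and $B_0^{\mathbb N}=N_n\rtimes G_n$. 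First I would record the easy inclusion: any $S\in\hat{G_n}$ pulls back to $\tau_n^{*}S:=S\circ\tau_n$, a representation of $B_0^{\mathbb N}$ which ``depends only on $G_n$'' (it is trivial on the tail $N_n$) and which, because $\tau_n$ is surjective, satisfies $(\tau_n^{*}S)(B_0^{\mathbb N})''=S(G_n)''$, hence is irreducible. So $\bigcup_n\tau_n^{*}(\hat{G_n})$ is contained in the set of irreducible local representations.

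For the converse, let $T$ be an irreducible unitary representation of $B_0^{\mathbb N}$ that is local at some level $n$. The key reduction is that ``depends only on $G_n$'' forces $T$ to be trivial on $N_n$, so that $T=S\circ\tau_n$ with $S:=T|_{G_n}$ a representation of $G_n$; then surjectivity of $\tau_n$ gives $S(G_n)''=T(B_0^{\mathbb N})''=B(H)$, so $S\in\hat{G_n}$ and $[T]\in\tau_n^{*}(\hat{G_n})$. Combined with the first paragraph this shows that, up to unitary equivalence, the irreducible local representations are exactly $\bigcup_n\tau_n^{*}(\hat{G_n})$, and each $\hat{G_n}$ is in turn exhausted by Kirillov's orbit construction (Theorem~\ref{9.t.Kir}). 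It remains to identify this directed union with $\bigcup_n\hat{G_n}$: the map $S\mapsto\tau_n^{*}S$ is injective on equivalence classes (an intertwiner of $\tau_n^{*}S_1$ and $\tau_n^{*}S_2$ intertwines $S_1$ and $S_2$ by surjectivity of $\tau_n$), and it is compatible with the tower, $\tau_n^{*}S=\tau_{n+1}^{*}(S\circ p^n_{n+1})$, where $S\mapsto S\circ p^n_{n+1}$ is precisely the embedding $\hat{G_n}\hookrightarrow\hat{G_{n+1}}$ of Remark~\ref{orbit-fin}. Hence the chain $\tau_1^{*}(\hat{G_1})\subseteq\tau_2^{*}(\hat{G_2})\subseteq\cdots$ inside $\widehat{B_0^{\mathbb N}}$ is order-isomorphic to the defining chain of $\bigcup_n\hat{G_n}$, and passing to the union completes the proof.

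I expect the only step requiring a genuine argument, as opposed to bookkeeping, to be the reduction in the second paragraph: extracting from the definition of locality that $T$ kills $N_n$ \emph{exactly} (rather than, say, acting by a $G_n$-invariant character on it). This is the point where a weaker a priori reading of ``depends only on $G_n$'' would fail, and it is precisely what excludes the regular, quasiregular and induced representations of Subsections~\ref{reg+quasireg-rep} and \ref{orb-meth,1-step} from the class. Everything after that — transfer of irreducibility across the surjection $\tau_n$, injectivity of $\tau_n^{*}$ on equivalence classes, and matching with the embeddings $\hat{G_n}\hookrightarrow\hat{G_{n+1}}$ — is routine.
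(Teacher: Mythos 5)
Your proposal is correct, and it follows exactly the route the paper has in mind: the paper states this theorem without proof, attributing it to Ostrovsky and presenting it as a ``reformulation'' of $B^{\mathbb N}=\varprojlim_{n,p}B(n,{\mathbb R})$ and $\widehat{B^{\mathbb N}}=\bigcup_n\widehat{G_n}$, and your argument simply supplies the content of that reformulation (local $\Leftrightarrow$ factors through a truncation $\tau_n$, plus transfer of irreducibility and equivalence across the surjection and compatibility with the embeddings $\hat{G_n}\hookrightarrow\hat{G}_{n+1}$ of Remark~\ref{orbit-fin}). The only caveat is the one you already flag: everything rests on reading the informal phrase ``depends only on the elements of the subgroup $G_n$'' as ``factors through $\tau_n$,'' which is the reading the paper intends; under that reading the ``key reduction'' that $T$ kills $N_n$ is immediate, since $\tau_n(x)=e$ for $x\in N_n$ forces $T(x)=T(e)=I$.
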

\section{Appendix 1. Gauss decompositions}
\label{Append-B-0^Z}
\subsection{Gauss decomposition of $n\times n$ matrices}
\label{Gauss-dec}
We need some decomposition of the matrix $C\in {\rm Mat}(n,\mathbb C)$. Let us denote  by
$$
M^{i_1i_2...i_r}_{j_1j_2...j_r}(C),\,\, 1\leq i_1<...<i_r\leq n,\,\, 1\leq j_1<...<j_r\leq n
$$
the  minors of the matrix $C$ with
$i_1,i_2,...,i_r$ rows and $j_1,j_2,...,j_r$ columns.
%
\begin{thm}[Gauss decomposition, \cite{Gan58}]
\label{t.C=UDL}
 A matrix $C\in {\rm Mat}(n,\mathbb C)$  admits the following decomposition
$C=LDU$ (Gauss decomposition),
\begin{equation}
\label{C=UDL}
\left(\begin{array}{cccc}
c_{11}&c_{12}&\dots&c_{1n}\\
c_{21}&c_{22}&\dots&c_{2n}\\
            &&\dots&\\
c_{n1}&c_{n2}&\dots&c_{nn}\\
\end{array}\right)=
\left(\begin{array}{cccc}
1&0&\dots&0\\
l_{21}&1&\dots&0\\
            &&\dots&\\
l_{n1}&l_{n2}&\dots&1\\
\end{array}\right)
\left(\begin{array}{cccc}
d_{1}&0&\dots&0\\
0&d_{2}&\dots&0\\
            &&\dots&\\
0&0&\dots&d_{n}\\
\end{array}\right)
\left(\begin{array}{cccc}
1&u_{12}&\dots&u_{1n}\\
0&1&\dots&u_{2n}\\
            &&\dots&\\
0&0&\dots&1\\
\end{array}\right)
\end{equation}
where $L$ (resp. $U$) is lower (resp. upper) triangular matrix and $D$ a diagonal matrix if and only if all principal
minors of the matrix $C$ are different from zeros i.e.
$M^{1,2,\dots, k}_{1,2,\dots,k}(C)\not=0,\,\,1\leq k\leq n$. Moreover the matrix elements of the matrices $L,\,\,U$ and $D$ are
given by the formulas (see \cite[Ch.II, \S 4, (44), (45)]{Gan58})
\begin{equation}
\label{U,L=}
l_{mk}=\frac{M^{1,2,\dots, k-1,m}_{1,2,\dots, k-1,k}(C)}{M^{1,2,\dots, k-1,k}_{1,2,\dots, k-1,k}(C)},\quad
u_{km}=\frac{M^{1,2,\dots, k-1,k}_{1,2,\dots, k-1,m}(C)}{M^{1,2,\dots, k-1,k}_{1,2,\dots, k-1,k}(C)},1\leq k<m\leq n,
\end{equation}
\begin{equation}
\label{D=}
d_1=M^1_1(C),\quad d_k=\frac{M^{1,2,\dots, k}_{1,2,\dots, k}(C)}{M^{1,2,\dots, k-1}_{1,2,\dots, k-1}(C)},\quad 2\leq k\leq n.
\end{equation}
\end{thm}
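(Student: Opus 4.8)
The plan is to establish four facts in turn: uniqueness of the decomposition, necessity of the minor condition (together with the formula for the $d_k$), existence when the minor condition holds, and finally the explicit formulas for the off-diagonal entries. Throughout, ``lower/upper triangular'' means \emph{unitriangular} (ones on the diagonal, as in the displayed form), and the decomposition $C=LDU$ is understood with $D$ nonsingular; this nonsingularity is implicit in the statement and is exactly what rules out the trivial decomposition $C=I\cdot 0\cdot I$ of a singular $C$. Uniqueness comes first: if $C=L_1D_1U_1=L_2D_2U_2$ with the $D_i$ invertible, then $L_2^{-1}L_1=D_2U_2U_1^{-1}D_1^{-1}$, where the left side is lower unitriangular and the right side upper triangular, so both equal $I$ and $L_1=L_2$; cancelling and comparing $D_2^{-1}D_1=U_2U_1^{-1}$ (a diagonal matrix equal to an upper unitriangular one) gives $D_1=D_2$, $U_1=U_2$.

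For necessity I would compare leading $k\times k$ principal submatrices. Since $L$ is lower and $U$ upper triangular, the leading $k\times k$ block of $LDU$ is the product $L^{[k]}D^{[k]}U^{[k]}$ of the corresponding blocks, with $L^{[k]},U^{[k]}$ unitriangular, so $M^{1\dots k}_{1\dots k}(C)=\det\!\big(L^{[k]}D^{[k]}U^{[k]}\big)=d_1\cdots d_k$. With $D$ nonsingular this is nonzero for each $k$, which is the asserted condition, and taking ratios of consecutive leading minors yields $d_1=M^1_1(C)$ and $d_k=M^{1\dots k}_{1\dots k}(C)/M^{1\dots k-1}_{1\dots k-1}(C)$, i.e. formula \eqref{D=}.

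Existence I would prove by induction on $n$ via the Schur complement. Partition $C$ with leading $(n-1)\times(n-1)$ block $C'$, last column $b$, last row $c^{T}$, corner $\gamma$. The leading principal minors of $C'$ are those of $C$, hence nonzero, so by induction $C'=L'D'U'$ with $D'$ invertible. Looking for $L=\left(\begin{smallmatrix}L'&0\\ x^{T}&1\end{smallmatrix}\right)$, $D={\rm diag}(D',d_n)$, $U=\left(\begin{smallmatrix}U'&y\\ 0&1\end{smallmatrix}\right)$ and multiplying out, the $(1,1)$ block reproduces $C'$ while the other blocks force $y=(D')^{-1}(L')^{-1}b$, $x^{T}=c^{T}(D'U')^{-1}$ and $d_n=\gamma-x^{T}D'y$, all well defined because $D'$ is invertible. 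Finally $d_n\neq0$: from the construction $\det C=(d_1\cdots d_{n-1})\,d_n=M^{1\dots n-1}_{1\dots n-1}(C)\,d_n$, and both $\det C=M^{1\dots n}_{1\dots n}(C)$ and $M^{1\dots n-1}_{1\dots n-1}(C)$ are nonzero by hypothesis; this also recovers the $d_n$ formula.

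It remains to derive \eqref{U,L=}. Writing $C=L\,(DU)$ with $DU$ upper triangular and applying the Cauchy--Binet expansion to the minor $M^{1\dots k-1,m}_{1\dots k}(C)$ (for $m>k$), the only row-index set that gives a nonzero minor of the upper-triangular factor $DU$ on columns $1,\dots,k$ is $\{1,\dots,k\}$ itself, contributing $d_1\cdots d_k$, while the accompanying minor $M^{1\dots k-1,m}_{1\dots k}(L)$ of the lower-unitriangular factor equals $l_{mk}$ (expand that $k\times k$ submatrix along its last column). Hence $l_{mk}=M^{1\dots k-1,m}_{1\dots k}(C)/M^{1\dots k}_{1\dots k}(C)$, and the formula for $u_{km}$ follows symmetrically from the factorization $C=(LD)\,U$. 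The main obstacle is not conceptual — the same triangularity argument drives steps (i)--(iv) — but lies in this last minor bookkeeping and in the point that must be flagged throughout, namely the nonsingularity convention on $D$ without which the ``only if'' direction fails.
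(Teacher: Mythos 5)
Your proposal is correct, and it is in fact considerably more complete than what the paper records. The paper's proof consists only of the derivation of the explicit formulas \eqref{U,L=} and \eqref{D=}: it rewrites the decomposition as $L^{-1}C=DU$ and $CU^{-1}=LD$ and uses the fact that left multiplication by a lower unitriangular matrix does not change minors built on the first $k$ rows (dually for columns), so that, e.g., $M^{1,\dots,k}_{1,\dots,k-1,m}(C)=M^{1,\dots,k}_{1,\dots,k-1,m}(DU)=d_1\cdots d_k u_{km}$. Your Cauchy--Binet computation on $C=L\,(DU)$ and $C=(LD)\,U$ is the same calculation in substance --- triangularity kills all but one term of the sum --- just organized around the product rather than around the invariance of minors, so on that part the two arguments are essentially equivalent. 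What you add, and the paper omits entirely (deferring implicitly to Gantmacher), is the rest of the ``if and only if'': a uniqueness argument by comparing a lower unitriangular matrix with an upper triangular one, the necessity of the minor condition via the leading-block identity $M^{1\dots k}_{1\dots k}(C)=d_1\cdots d_k$, and an existence proof by induction on $n$ through the Schur complement. You also correctly flag the one point at which the statement as written is imprecise: without the convention that $D$ is nonsingular the ``only if'' direction is false (e.g. the singular matrix with all entries $1$ factors as $LDU$ with $d_2=0$), and your reading is the one that makes the theorem true. The only cosmetic omission is the $n=1$ base case of the induction, which is trivial.
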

\begin{proof}
If we write $L^{-1}C=DU$, we get
$$
M^{1,2,\dots, k-1,k}_{1,2,\dots, k-1,k}(C)=
M^{1,2,\dots, k-1,k}_{1,2,\dots, k-1,k}(L^{-1}C)=M^{1,2,\dots, k-1,k}_{1,2,\dots, k-1,k}(DU)=d_1\dots d_k,
$$
this implies (\ref{D=}). Moreover, we get also
$$
M^{1,2,\dots, k-1,k}_{1,2,\dots, k-1,m}(L^{-1}C)=M^{1,2,\dots, k-1,k}_{1,2,\dots, k-1,m}(C)=M^{1,2,\dots, k-1,k}_{1,2,\dots, k-1,m}(DU)=
d_1\dots d_ku_{km},\,\,k<m,
$$
this implies the second formula in (\ref{U,L=}). Similarly if we write $CU^{-1}=LD$ we get
$$
M^{1,2,\dots, k-1,m}_{1,2,\dots, k-1,k}(CU^{-1})=M^{1,2,\dots, k-1,m}_{1,2,\dots, k-1,k}(C)=M^{1,2,\dots, k-1,m}_{1,2,\dots, k-1,k}(LD)=
d_1\dots d_kl_{mk},\,\,k<m,
$$
this implies the first formula in (\ref{U,L=}).
\end{proof}
%

\subsection{Gauss  decomposition of infinite order matrices}
\label{Gauss-dec-inf}
Let us consider the infinite  matrix $C,L,D,U\in {\rm Mat}(\infty,\mathbb C)$.
\begin{thm}[Gauss decomposition $C=LDU$]
\label{t.C=LDU,Z1}
 A matrix $C\in {\rm Mat}(\infty,\mathbb C)$  admits the following decomposition
$C=LDU$ (Gauss decomposition),
\begin{equation}
\label{C=LDU,inf}
\left(\begin{smallmatrix}
c_{11}&c_{12}&\dots&c_{1n}&\dots\\
c_{21}&c_{22}&\dots&c_{2n}&\dots\\
            &&\dots&&\dots\\
c_{n1}&c_{n2}&\dots&c_{nn}&\dots\\
 &&\dots&&\dots\\
 \end{smallmatrix}\right)=
\left(\begin{smallmatrix}
1&0&\dots&0\\
l_{21}&1&\dots&0&\dots\\
            &&\dots&&\dots\\
l_{n1}&l_{n2}&\dots&1&\dots\\
 &&\dots&&\dots\\
 \end{smallmatrix}\right)
\left(\begin{smallmatrix}
d_{1}&0&\dots&0&\dots\\
0&d_{2}&\dots&0&\dots\\
            &&\dots&&\dots\\
0&0&\dots&d_{n}&\dots\\
 &&\dots&&\dots\\
\end{smallmatrix}\right)
\left(\begin{smallmatrix}
1&u_{12}&\dots&u_{1n}&\dots\\
0&1&\dots&u_{2n}&\dots\\
            &&\dots&&\dots\\
0&0&\dots&1&\dots\\
 &&\dots&&\dots\\
 \end{smallmatrix}\right)
 \end{equation}
where $L$ (resp. $U$) is lower (resp. upper) triangular matrix and $D$ a diagonal matrix of infinite order if and only if all principal
minors of the matrix $C$ are different from zeros i.e.
$M^{1,2,\dots, k}_{1,2,\dots,k}(C)\not=0,\,\,k\in{\mathbb N}$. Moreover the matrix elements of the matrices $L,\,\,U$ and $D$ are
given by the same formulas as in the Theorem \ref{t.C=UDL}:
\begin{equation}
\label{U,L=inf}
l_{mk}=\frac{M^{1,2,\dots, k-1,m}_{1,2,\dots, k-1,k}(C)}{M^{1,2,\dots, k-1,k}_{1,2,\dots, k-1,k}(C)},\quad
u_{km}=\frac{M^{1,2,\dots, k-1,k}_{1,2,\dots, k-1,m}(C)}{M^{1,2,\dots, k-1,k}_{1,2,\dots, k-1,k}(C)},\quad k,m \in {\mathbb N},\,\, k<m,
\end{equation}
\begin{equation}
\label{D=,inf}
d_1=M^1_1(C),\quad d_k=\frac{M^{1,2,\dots, k}_{1,2,\dots, k}(C)}{M^{1,2,\dots, k-1}_{1,2,\dots, k-1}(C)},\quad k\in {\mathbb N},\,\,k>1.
\end{equation}
\end{thm}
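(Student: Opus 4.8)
The plan is to reduce the statement to the finite-dimensional Gauss decomposition (Theorem~\ref{t.C=UDL}) by passing to the top-left $N\times N$ corners of $C$ and checking that the resulting decompositions are mutually compatible. First I would note that every minor occurring in the formulas (\ref{U,L=inf}), (\ref{D=,inf}) is a \emph{finite} minor of the infinite matrix $C$, involving only finitely many rows and columns; so the right-hand sides are well defined provided the denominators $M^{1,2,\dots,k}_{1,2,\dots,k}(C)$ do not vanish. For $N\in{\mathbb N}$ let $C_N$ be the submatrix of $C$ formed by its first $N$ rows and columns. Since $M^{1,2,\dots,k}_{1,2,\dots,k}(C)=M^{1,2,\dots,k}_{1,2,\dots,k}(C_N)$ for all $k\le N$, the hypothesis that all principal minors of $C$ are nonzero is equivalent to the statement that all principal minors of every $C_N$ are nonzero.

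Assume this hypothesis. By Theorem~\ref{t.C=UDL} each $C_N$ has a Gauss decomposition $C_N=L_ND_NU_N$ whose entries are given by the finite-dimensional formulas (\ref{U,L=}), (\ref{D=}). The key point is \emph{compatibility}: those formulas express $l_{mk}$, $u_{km}$, $d_k$ through minors $M^{1,\dots,k-1,m}_{1,\dots,k-1,k}$, $M^{1,\dots,k-1,k}_{1,\dots,k-1,m}$, $M^{1,\dots,k}_{1,\dots,k}$ that depend only on rows and columns with indices $\le\max(m,k)$, hence these numbers are unchanged when $C_N$ is replaced by $C_{N'}$ with $N'\ge N$, or by $C$ itself, as long as $m,k\le N$. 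Consequently $L_N$ is exactly the top-left $N\times N$ corner of $L_{N'}$, and similarly for $D$ and $U$. Therefore the formulas (\ref{U,L=inf}), (\ref{D=,inf}) consistently define infinite matrices $L$, $D$, $U$ (lower triangular with unit diagonal, diagonal, and upper triangular with unit diagonal respectively) whose $N\times N$ corners are $L_N$, $D_N$, $U_N$.

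It remains to verify $C=LDU$ entrywise, and here there is no analytic subtlety: because $L$ is lower triangular and $U$ upper triangular, for fixed $i,j$ we have $(LDU)_{ij}=\sum_{k=1}^{\min(i,j)}l_{ik}d_ku_{kj}$, a finite sum, so convergence is not an issue. Choosing any $N\ge\max(i,j)$ and using $C_N=L_ND_NU_N$ together with the compatibility above gives $(LDU)_{ij}=(L_ND_NU_N)_{ij}=(C_N)_{ij}=c_{ij}$, which proves the ``if'' direction. For the converse, if $C=LDU$ with $L,D,U$ of the indicated shape, the same triangularity argument shows $C_N=L_ND_NU_N$, so $M^{1,\dots,k}_{1,\dots,k}(C)=M^{1,\dots,k}_{1,\dots,k}(C_N)=d_1d_2\cdots d_k\ne0$, since each $d_i\ne0$ (otherwise a principal minor would vanish and the ratios in (\ref{U,L=inf}) would be undefined). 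The only real work is the bookkeeping that the corners $C_N$ fit together coherently; once that is established, the infinite case is a direct corollary of Theorem~\ref{t.C=UDL}.
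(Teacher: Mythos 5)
Your proof is correct and is essentially the paper's argument made explicit: the paper's proof consists of the single sentence that one repeats the finite-dimensional proof of Theorem~\ref{t.C=UDL} word for word, which works precisely because of the finitary facts you spell out (all minors in \eqref{U,L=inf}--\eqref{D=,inf} are finite, and triangularity makes every entry of $LDU$ a finite sum). Your truncation-and-compatibility bookkeeping with the corners $C_N$ is a clean way of justifying that one-line reduction, so nothing further is needed.
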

\begin{proof} The proof repeat word by word the proof of the Theorem~\ref{t.C=UDL}.\end{proof}
\section{Appendix 2. One elementary fact  concerning abelian von Neumann algebras}
\label{Append-B-0^Z-Neumann}
Let $(X,{\mathcal F},\mu)$ be a {\it measurable space}, with a finite measure $\mu(X)<\infty$, where ${\mathcal F}$
is a sigma-algebra.
Consider the set $(f_n)=(f_n)_{n\in \mathbb N}$ of measurable real valued functions on $X$ i.e. $f_n:X\mapsto {\mathbb R}$.
Denote by $B(H)$ the von Neumann algebra of all bounded operators in the Hilbert space $H=L^2(X,\mu)$ and let ${\mathfrak A}^{(f_n)} (\in B(H))$
be a von Neumann algebra generated by operators $U_n(t)$ of multiplication by functions $\exp(itf_n(x)),\,\,n\in {\mathbb N}$
$$
{\mathfrak A}^{(f_n)}=\big(U_n(t)=e^{itf_n}\mid n\in {\mathbb N},\,t\in {\mathbb R}\big)''.
$$
We are interesting in the following {\it question}. Let $f_n\rightarrow f$ as $n\rightarrow\infty$ in some sense.
When $U(t)=e^{itf}\in {\mathfrak A}^{(f_n)}$ for all $t\in {\mathbb R} $?

Since ${\mathfrak A}^{(f_n)}$ is a von Neumann algebra it is sufficient to find when the strong convergence of the unitary operators
in the space $H$ holds  i.e. $s.\lim_nU_n(t)=U(t)$, where the operators  $U_n(t),\,\,n\in{\mathbb N}$ and  $U(t)$ are defined as follows
$$
(U_n(t)g)(x)=e^{itf_n(x)}g(x),\quad (U(t)g)(x)=e^{itf(x)}g(x),\quad g\in L^2(X,\mu),\,\,t\in {\mathbb R}.
$$
\begin{lem}
\label{lim-f(n)=f-in-von-Neu}
Let $f_n\rightarrow f$ as $n\rightarrow\infty$ pointwise almost everywhere, then $s.\lim_nU_n(t)=U(t)$ hence $U(t)=e^{itf}\in {\mathfrak A}^{(f_n)}$.
\end{lem}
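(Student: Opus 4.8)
The plan is a direct application of the Lebesgue dominated convergence theorem, since the assertion is essentially the continuity of the continuous functional calculus under pointwise convergence of the symbols. First I would fix $t\in{\mathbb R}$ and an arbitrary vector $g\in L^2(X,\mu)$, and write out the square of the $L^2$-norm of the difference:
$$
\|U_n(t)g-U(t)g\|^2=\int_X\bigl|e^{itf_n(x)}-e^{itf(x)}\bigr|^2\,|g(x)|^2\,d\mu(x).
$$
Since $f_n\to f$ pointwise $\mu$-almost everywhere and the map $z\mapsto e^{itz}$ is continuous on ${\mathbb R}$, the integrand converges to $0$ for $\mu$-almost every $x$. Moreover it is dominated by the fixed integrable function $4|g(x)|^2\in L^1(X,\mu)$, because $|e^{i\alpha}-e^{i\beta}|\le 2$ for all real $\alpha,\beta$ and $g\in L^2(X,\mu)$. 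By dominated convergence the integral tends to $0$, so $U_n(t)g\to U(t)g$ in $L^2(X,\mu)$; as $g$ was arbitrary, this gives $s.\lim_n U_n(t)=U(t)$.

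To conclude, I would invoke the fact that a von Neumann algebra is closed in the strong operator topology. Each $U_n(t)$ lies in ${\mathfrak A}^{(f_n)}$ by the very definition of this algebra as the von Neumann algebra generated by the operators $e^{isf_n}$, $s\in{\mathbb R}$; hence the strong limit $U(t)=e^{itf}$ also lies in ${\mathfrak A}^{(f_n)}$, for every $t\in{\mathbb R}$. This is exactly the statement of the lemma, and it is the form in which the lemma is used in the proof of Lemma~\ref{W(S_kn)=W(x_kn)B^Z} to pass from the matrix elements of the approximating matrices to those of $C^{-1}(x)$.

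There is no serious obstacle here: the argument is routine measure theory combined with the SOT-closedness of von Neumann algebras. The only point worth flagging is that one must verify strong (not merely weak) convergence of the $U_n(t)$, which is why the estimate is carried out at the level of $\|U_n(t)g-U(t)g\|$ rather than of matrix coefficients; the finiteness of $\mu$ plays no role in the computation, only that $g\in L^2(X,\mu)$, so that $|g|^2\in L^1(X,\mu)$ provides the dominating function.
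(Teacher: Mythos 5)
Your proof is correct and follows essentially the same route as the paper: compute $\|U_n(t)g-U(t)g\|^2$, observe pointwise convergence of the integrand, and apply dominated convergence (the paper factors out $e^{itf(x)}$ to reduce to $|e^{it\alpha_n(x)}-1|^2|g(x)|^2$, which is the same estimate). Your explicit identification of the dominating function $4|g|^2$ and of the SOT-closedness of the von Neumann algebra merely makes precise what the paper leaves implicit.
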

\begin{proof} For $g\in H$ we get
$$
\Vert (U_n(t)-U(t))g\Vert^2=\int_X \mid \big(e^{itf_n(x)}-e^{itf(x)}\big)g(x)\mid^2d\mu(x)=
$$
$$
\int_X \mid e^{itf_n(x)-itf(x)}-1\mid^2\mid g(x)\mid^2d\mu(x)=\int_X \mid e^{it\alpha_n(x)}-1\mid^2\mid g(x)\mid^2d\mu(x)
\rightarrow 0
$$
as $n\rightarrow\infty$, if $\alpha_n(x):=f_n(x)-f(x)\rightarrow 0$ pointwise almost everywhere by  Lebesgue's dominated convergence theorem.
\index{theorem!Lebesgue's!dominated convergence}
\end{proof}

\end{document}